\definecolor{darkgreen}{rgb}{0,0.5,0}
\definecolor{darkred}{rgb}{0.7,0,0}
\definecolor{darkblue}{rgb}{0,.2,.7}
\newtheorem{thm}{Theorem}[section]
\newtheorem{prop}[thm]{Proposition}
\newtheorem{Def}[thm]{Definition}
\newtheorem{lem}[thm]{Lemma}
\newtheorem{corl}[thm]{Corollary}
\newtheorem{example}[thm]{Example}
\renewcommand{\i}{\mathrm{i}}
\newcommand{\eps}{\varepsilon}
\newcommand{\define}{\mathrm{:=}}
\newcommand{\Dc}{\slashed{D}}
\def \<{\langle}
\def \>{\rangle}
\def \H{{\cal H}}
\def \H^0{{\cal H}^0 or}
\def \p{\partial}
\def \n{\nabla}
\def \beq{\begin{equation}}
\def \eeq{\end{equation}}
\def \n{\nabla}
\def \eref{\eqref}
\begin{document}



\title{On the $L^p$ Spectrum of the Dirac operator}

\author{Nelia Charalambous}
\address{Department of Mathematics and Statistics, University of Cyprus, Nicosia, 1678, Cyprus} \email[Nelia Charalambous]{charalambous.nelia@ucy.ac.cy}

 \author{Nadine Gro{\ss}e} \address{Mathematisches Institut, Universit\"at Freiburg, 79100 Freiburg, Germany} \email[Nadine Gro{\ss}e]{nadine.grosse@math.uni-freiburg.de}

\thanks{The first author is partially supported by a University of Cyprus Internal Grant. The second author is partially supported by DFG SPP2026.}

  \subjclass[2010]{Primary: 58J50;
Secondary: 58J35, 53C27}

\keywords{Dirac operator, independence of $L^p$-spectrum}

\begin{abstract} Our main goal in the present paper is to expand the  known class of noncompact manifolds over which the $L^2$-spectrum of a general Dirac operator and its square is maximal. To achieve this, we first find sufficient conditions on the manifold so that the $L^p$-spectrum of the Dirac operator and its square is independent of $p$ for $p\geq 1$. Using the $L^1$-spectrum, which is simpler to compute, we generalize the class of manifolds over which the $L^p$-spectrum of the Dirac operator is the real line for all $p$. We also show that by applying the generalized Weyl criterion, we can find large classes of manifolds with asymptotically nonnegative Ricci curvature, or which are asymptotically flat, such that the $L^2$-spectrum of a general Dirac operator  and its square is maximal.
\end{abstract}
\maketitle

\section{Introduction}

 The spectrum of geometric operators like the Laplacian or the Dirac operator plays a fundamental role in several analytic and geometric problems. However, computing the spectrum is in general a difficult task without any additional assumptions on the symmetries and structure of the manifold. Depending on the manifold there are several methods at hand. For example, it is in general easier to compute the $L^1$-spectrum of an operator over a noncompact manifold (instead of its $L^2$-spectrum) due to the fact that $L^1$-estimates for approximate eigenfunctions are easier to obtain in comparison to $L^2$-estimates. This is also reflected in the fact that the $L^2$-spectrum is in general a subset of the $L^1$-spectrum. As a result, whenever there exists a $p$-independence result for the $L^p$-spectrum, one can determine the $L^2$-spectrum by computing instead the $L^1$-spectrum of the operator. Wang in \cite{Wang97}*{Thm.~3} was the first one to utilize such a technique to prove that the $L^2$-spectrum of the Laplacian acting on functions is $[0,\infty)$ (in other words maximal) on a much larger class of manifolds than was previously known;  he took advantage of the $p$-independence result of Sturm \cite{sturm}.\medskip

Our first main goal is to find sufficient conditions on the manifold such that a $p$-independence result holds for a general Dirac operator and its square for $p\geq 1$. Most of the $p$-independence results known up to now are for Laplace-type operators. For closed manifolds it is easy to see that the spectrum is
 independent of $p$. So the interesting cases are noncompact manifolds. Hempel and Voigt \cites{HemVo, HemVo2} proved $p$-independence of the spectrum for Schr\"odinger operators  with potentials admitting certain singularities on $\mathbb R^n$.
 Later Kordyukov \cite{Kordu} obtained such a $p$-independence result for uniformly elliptic operators with uniformly bounded smooth coefficients on manifolds of bounded geometry having subexponential volume growth. Independently, Sturm
\cite{sturm} showed the independence of the $L^p$-spectrum for a class of uniformly elliptic operators in divergence form on manifolds with uniformly subexponential volume growth and Ricci curvature bounded from below. Both results include the Laplacian acting on functions. The condition on subexponential volume growth is an essential one since
 the $L^p$-spectrum of the Laplacian on the hyperbolic space does depend on $p$
\cite{Davies}*{Thm.~5.7.1} and is the convex hull of a $p$-dependent parabola in the complex plane. Later the first author proved in \cite{Char1}*{Prop.~9} that under the assumptions of the result by Sturm and whenever the curvature tensor on $k$-forms in bounded below, then the Hodge-Laplacian acting on $k$-forms has a $p$-independent spectrum.  The case of the drifting Laplacian over a weighted manifold was studied in \cite{ChLu1}. \medskip

For the $p$-independence of the Dirac operator much less is known. The classical Dirac operator on hyperbolic spaces also has $p$-dependent spectrum as was calculated in \cite{AG16}. But a $p$-independence result for the spectrum of the Dirac operator and its square on nice enough noncompact manifolds was not previously known. In this paper we will prove a $p$-independence result for the square Dirac operator on $L^p$, which extends to the Dirac operator on $L^p$ whenever the manifold has the right dimension or certain symmetric structures. In particular, we will prove the following result.

\begin{thm} \label{thmD2Lp}
Let $M^n$ be a complete Riemannian manifold with Clifford bundle $S$ and associated Dirac operator $D$. Suppose that the Ricci curvature of $M$ and the Clifford contraction $\mathcal{R}$ in \eqref{eq:Wei} are both bounded below, and the volume of $M$ grows uniformly subexponentially. Then the $L^p$-spectrum of the square Dirac operator acting on $L^p$, $D_p^2$, is independent of $p$  for all $p\in [1,\infty]$. Moreover, the set of isolated eigenvalues of finite multiplicity and their algebraic multiplicity is independent of $p$  for all $p\in[1,\infty]$.\medskip

If additionally $\sigma(D_2)$ is symmetric and $\sigma(D_1)\neq \mathbb C$, then the $L^p$-spectrum of $D_p$ is independent of $p$  for all $p\in[1,\infty)$.
\end{thm}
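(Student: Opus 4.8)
\noindent\emph{Proof strategy for the second statement.} The plan is to transfer the $p$-independence of $\sigma(D_p^2)$, just established, to $D_p$ itself by combining a spectral mapping theorem on $L^p$ with the symmetry of $\sigma(D_2)$, $L^p$--$L^{p'}$ duality, and Riesz--Thorin interpolation. Write $\sigma_0:=\sigma(D_p^2)$; by the part of the theorem already proved this is independent of $p\in[1,\infty]$, and since $D_2^2\ge 0$ is self-adjoint, $\sigma_0=\sigma(D_2^2)\subseteq[0,\infty)$. Set $\Sigma:=\{\lambda\in\mathbb C:\lambda^2\in\sigma_0\}$, a closed subset of $\mathbb R$.

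The first step is the spectral mapping identity $\sigma(D_p^2)=\{\lambda^2:\lambda\in\sigma(D_p)\}$. I would deduce it from the factorization $\lambda^2-D_p^2=(\lambda-D_p)(\lambda+D_p)=(\lambda+D_p)(\lambda-D_p)$ on $\Dom(D_p^2)$: if $\lambda^2\notin\sigma_0$ then $\lambda^2-D_p^2$ is bijective onto $L^p$, which forces each of $\lambda\pm D_p$ to be injective (reading one factorization) and surjective (reading the other), hence boundedly invertible by the closed graph theorem; the converse is immediate. This uses the identification of the $L^p$-realization of $D^2$ with $(D_p)^2$, which is part of the framework set up earlier. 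Two consequences follow at once: $\sigma(D_p)\subseteq\Sigma\subseteq\mathbb R$, and $\sigma(D_p)\cup(-\sigma(D_p))=\Sigma$. Specializing the second identity to $p=2$ and using that $\sigma(D_2)$ is symmetric yields $\Sigma=\sigma(D_2)$; hence $\sigma(D_p)\subseteq\sigma(D_2)$ for every $p$, and it remains only to prove the reverse inclusion.

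For $\sigma(D_2)\subseteq\sigma(D_p)$ --- that the $L^2$-spectrum is the smallest --- I would use that $D$ is formally self-adjoint, so that $D_{p'}$ is, up to a conjugation that is harmless since all spectra in play are already known to be real, the Banach adjoint $D_p^\ast$ for $1/p+1/p'=1$; thus $\sigma(D_p)=\sigma(D_{p'})$. The resolvents $(\lambda-D_q)^{-1}$ form a consistent family, so for $\lambda$ in the common resolvent set $\rho(D_p)=\rho(D_{p'})$, Riesz--Thorin interpolation --- with $L^2$ lying between $L^p$ and $L^{p'}$ --- produces a bounded operator on $L^2$, and a short computation using the essential self-adjointness of $D$ on $L^2$ identifies it with $(\lambda-D_2)^{-1}$. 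Hence $\rho(D_p)\subseteq\rho(D_2)$, i.e.\ $\sigma(D_2)\subseteq\sigma(D_p)$, and combining with the previous step $\sigma(D_p)=\sigma(D_2)$ for all $p\in[1,\infty)$.

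The main obstacle, and the place where the hypothesis $\sigma(D_1)\neq\mathbb C$ enters, is the $L^1$ endpoint: for $1<p<\infty$ reflexivity makes the duality and the spectral mapping routine, and the latter already forces $\sigma(D_p)\subseteq\mathbb R$, so $\mathbb C\setminus\mathbb R\subseteq\rho(D_p)$ and the interpolation of resolvents has ample room to operate; at $p=1$, however, one cannot a priori exclude $\rho(D_1)=\emptyset$, and interpolating the $L^1$- and $L^\infty$-resolvents to recover $(\lambda-D_2)^{-1}$ (as well as running the spectral mapping at $p=1$) needs $\rho(D_1)\neq\emptyset$ as input --- which is exactly $\sigma(D_1)\neq\mathbb C$. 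The remaining points are routine but not vacuous and rely on the earlier sections: the identification $D_p^2=(D_p)^2$ and of $D_p^\ast$ with $D_{p'}$ (where completeness and the two curvature bounds are used), the consistency of the resolvent family, and the verification that the interpolated operator really is the $L^2$-resolvent.
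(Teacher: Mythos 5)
Your overall architecture matches the paper's: Riesz--Thorin duality/interpolation gives $\sigma(D_2)\subseteq\sigma(D_p)$, and the spectral mapping for $D_p\mapsto D_p^2$ combined with the symmetry of $\sigma(D_2)$ and the $p$-independence of $\sigma(D_p^2)$ gives the reverse inclusion. So the route is essentially the same as the paper's; the difference lies in how you justify the spectral mapping and in your account of where the hypothesis $\sigma(D_1)\neq\mathbb C$ is used, and there is a genuine gap in both places.

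The gap is in your claim that $\sigma(D_p^2)=\{\lambda^2:\lambda\in\sigma(D_p)\}$ ``follows at once'' from the factorization $\lambda^2-D_p^2=(\lambda\mp D_p)(\lambda\pm D_p)$. For the hard direction ($\lambda^2\in\rho(D_p^2)\Rightarrow\pm\lambda\in\rho(D_p)$) you need to run the factorization on all of $\mathrm{dom}(D_p^2)$, which requires knowing that every $\psi$ with $\psi,D^2\psi\in L^p$ satisfies $D\psi\in L^p$ and $\psi\in\mathrm{dom}(D_p)$ --- i.e. that the semigroup generator $H_p$ coincides with $(D_p)^2$ with its \emph{natural} domain, not merely with the closure of $D_c^2$ from $C_c^\infty$. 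This containment $\mathrm{dom}(D_p^2)\subseteq\mathrm{dom}(D_p)$ is precisely what can fail without an extra hypothesis, and it is exactly what \cite{AG16}*{Lem.~B.8} supplies under the assumption $\sigma(D_p)\neq\mathbb C$ (or, alternatively, what Lemma~\ref{app_b} supplies under bounded geometry via the elliptic estimate $\|D\phi\|_{L^p}\le c(\|\phi\|_{L^p}+\|D^2\phi\|_{L^p})$). The paper is explicit that this spectral mapping statement is \emph{the only place} where $\sigma(D_1)\neq\mathbb C$ enters: one propagates $\sigma(D_p)\neq\mathbb C$ from $p=1$ to all $p\in[1,\infty)$ via the Riesz--Thorin containments $\sigma(D_p)\subseteq\sigma(D_1)$, and then applies the \cite{AG16} lemma at the relevant $p$.

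Consequently your diagnostic paragraph misidentifies the role of $\sigma(D_1)\neq\mathbb C$. You suggest the hypothesis is needed only to make the $L^1$--$L^\infty$ interpolation of resolvents nonvacuous and that ``reflexivity makes the spectral mapping routine'' for $1<p<\infty$. Reflexivity does not help the spectral mapping, which is a question about operator domains, not about the Banach space; and the Riesz--Thorin containments $\sigma(D_2)\subseteq\sigma(D_p)$ hold vacuously even if $\sigma(D_1)=\mathbb C$, so the interpolation step does not need the hypothesis at all. Your argument also becomes circular at this point: you use ``the spectral mapping forces $\sigma(D_p)\subseteq\mathbb R$'' to argue that the spectral mapping is routine, but the real-ness of $\sigma(D_p)$ is itself a consequence of the spectral mapping (it is not available beforehand). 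To repair the proof, replace the ``at once'' claim with an explicit invocation of the domain condition $\mathrm{dom}(D_p^2)\subseteq\mathrm{dom}(D_p)$, deduce it from $\sigma(D_p)\neq\mathbb C$ as in \cite{AG16}*{Lem.~B.8}, and obtain $\sigma(D_p)\neq\mathbb C$ for all $p\in[1,\infty)$ from $\sigma(D_1)\neq\mathbb C$ via the Riesz--Thorin inclusions.
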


Here the lower index on the operator refers to the $L^p$-spaces the operator acts on, see Section~\ref{sec2} for the precise definitions.\medskip

We make the assumption $\sigma(D_1)\neq \mathbb C$ so that we can compare the $L^p$-spectrum of $D_p$ with that of $D_p^2$ (see Section~\ref{sec5} for further details). We believe  that this assumption is not necessary in general or it can be replaced by more geometric assumptions, but we hope to address this issue in future work. For the particular case of the classical Dirac operator we will show:

\begin{corl}\label{corDLp}
 Let $M^n$ be a spin Riemannian manifold with classical Dirac operator $\Dc$. Suppose that the Ricci curvature of $M$ is bounded below, and the volume of $M$ grows uniformly subexponentially. Then the $L^p$-spectrum of $\Dc_p^2$ is independent of $p$  for all $p\in[1,\infty]$. \medskip

If additionally $\sigma (\Dc_1)\neq \mathbb{C}$ and if  $n\neq 3$ modulo $4$ or  $M$ has an orientation-reversing isometry that lifts to spin structure, then the $L^p$-spectrum of $\Dc_p$ is independent of $p$  for all $p\in[1,\infty)$.
\end{corl}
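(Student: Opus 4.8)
\smallskip
\noindent\emph{Sketch of the argument.}
The plan is to deduce everything by specialising Theorem~\ref{thmD2Lp} to the spinor bundle $S=\Sigma M$ with $D=\Dc$, so the first task is to verify that the two extra hypotheses of that theorem are automatic for the classical Dirac operator. By the Lichnerowicz formula $\Dc^2=\nabla^*\nabla+\tfrac14\,\mathrm{scal}_M$, the Clifford contraction $\mathcal R$ in \eqref{eq:Wei} is here the scalar endomorphism $\tfrac14\,\mathrm{scal}_M\cdot\mathrm{id}$; since a lower Ricci bound $\mathrm{Ric}\ge -K$ forces $\mathrm{scal}_M=\operatorname{tr}\mathrm{Ric}\ge -nK$, this $\mathcal R$ is bounded below as soon as the Ricci curvature is. The lower Ricci bound and the uniformly subexponential volume growth are hypotheses, so Theorem~\ref{thmD2Lp} applies directly and yields the first assertion, the $p$-independence of $\sigma(\Dc_p^2)$ on $[1,\infty]$.

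For the second assertion it remains, by the last clause of Theorem~\ref{thmD2Lp} together with the hypothesis $\sigma(\Dc_1)\ne\C$, to show that $\sigma(\Dc_2)$ is symmetric under $\lambda\mapsto-\lambda$. Since $M$ is complete, $\Dc$ is essentially self-adjoint and $\sigma(\Dc_2)\subset\R$, so it is enough to produce a bounded bijection $T$ of $L^2(\Sigma M)$ preserving $\mathrm{Dom}(\Dc_2)$ with $T\Dc T^{-1}=-\Dc$: this forces $-\sigma(\Dc_2)=\sigma(\Dc_2)$ at once when $T$ is $\C$-linear, and via $-\sigma(\Dc_2)=\overline{\sigma(\Dc_2)}=\sigma(\Dc_2)$, using $\sigma(\Dc_2)\subset\R$, when $T$ is antilinear. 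When $n$ is even I would take $T$ to be the complex volume element $\omega_\C$: it is parallel, pointwise unitary, satisfies $\omega_\C^2=\mathrm{id}$ and anticommutes with Clifford multiplication by tangent vectors, hence anticommutes with $\Dc$. When $n\equiv 1\pmod 4$ I would instead take $T$ to be the canonical parallel antilinear bundle automorphism $J$ of $\Sigma M$ (a pointwise isometry) --- the real structure if $n\equiv 1\pmod 8$, the quaternionic one if $n\equiv 5\pmod 8$ --- and check that in these dimensions $J$ \emph{anti}commutes with $\Dc$; the point is that Clifford multiplication by a real tangent vector is an ``imaginary'' endomorphism of $\Sigma M$, so that the antilinearity of $J$ contributes exactly the sign which is absent when $n\equiv 3\pmod 4$. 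Finally, if $M$ admits an orientation-reversing isometry lifting to the spin structure, pulling spinors back along it gives a bounded bijection $\Phi$ of $L^2(\Sigma M)$ with $\Phi\Dc\Phi^{-1}=-\Dc$, the sign coming from the reversal of orientation; here $T=\Phi$ works. In each of the three cases the hypotheses of the last clause of Theorem~\ref{thmD2Lp} are satisfied, which proves the corollary.

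I expect the main obstacle to be the odd-dimensional sign bookkeeping: one must pass through the classification of the parallel structures on the spinor module $\Sigma_n$ as a module over $\mathrm{Cl}(\R^n)$ --- not merely over $\mathrm{Spin}(n)$ --- and verify carefully that the canonical structure anticommutes with Clifford multiplication by vectors precisely when $n\not\equiv 3\pmod 4$. This is exactly what forces the exclusion of $n\equiv 3\pmod 4$, and it is consistent with the existence of $L^2$-asymmetric Dirac spectra in that residue class (e.g.\ on certain three-dimensional lens spaces). I would also emphasise that only the symmetry of the $L^2$-spectrum is needed, not that of $\sigma(\Dc_p)$ for general $p$ --- this is what makes the antilinear argument legitimate, since for $p\ne 2$ the operator $\Dc_p$ is not self-adjoint and $\sigma(\Dc_p)$ need not even be real.
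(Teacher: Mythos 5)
Your first paragraph is exactly the paper's argument for the first assertion: the Lichnerowicz formula makes $\mathcal R=\tfrac14\mathrm{scal}$, a lower Ricci bound forces a lower scalar bound, and Theorem~\ref{thmD2Lp} applies. For the second assertion, the paper takes a shortcut that you did not: it simply cites \cite{AG16}*{Lemmas~B.10 and~B.11} for the statement that $\sigma(\Dc_p)$ is symmetric whenever $n\not\equiv 3\pmod 4$ or $M$ has an orientation-reversing isometry lifting to the spin structure, and then invokes the last clause of Theorem~\ref{thmD2Lp}. You instead try to \emph{reprove} that symmetry from scratch. The even case via the complex volume element $\omega_\C$ (anticommuting with Clifford multiplication when $n$ is even) and the orientation-reversing isometry case are unproblematic and match what AG16 does. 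The genuinely delicate piece is $n\equiv 1\pmod 4$: your description of which parallel antilinear structure on $\Sigma M$ anticommutes with Clifford multiplication by vectors, and hence with $\Dc$, needs to be checked against the $\mathrm{Cl}_n$-module classification mod $8$ rather than asserted; you correctly flag this as the main hazard, but as written it is not a proof. Your observation that only $L^2$-symmetry is needed (so the antilinear argument, which a priori gives only $\sigma=-\overline{\sigma}$, suffices because $\sigma(\Dc_2)\subset\R$) is a nice economy and is indeed consistent with the hypothesis of Theorem~\ref{thmD2Lp}. Net effect: same mathematics, but the paper delegates the spectral-symmetry work to a reference while you attempt a self-contained derivation whose odd-dimensional case is left at the level of a plausible sketch.
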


Part of the machinery used for the $p$-independence result by Sturm or the first author are upper estimates for the heat kernel as in \cite{saco}.  In order to prove Theorem~\ref{thmD2Lp} we adapt those semigroup methods in order to be able to apply them to the square of the Dirac operator $D^2$. One difference is that the semigroup to $D^2$ is no longer a contraction. However, under our curvature assumption it will still be dominated by the semigroup associated to the Laplacian plus a certain constant. This allows us to use techniques developed in \cite{HSU} and make the semigroup method work. The $L^p$-independence of the Dirac operator is also of significance. We prove that it can occur over manifolds with certain symmetries, and relies on proving that the generator of the semigroup coincides with the square of the Dirac operator on $L^p$. The proofs of Theorem \ref{thmD2Lp} and Corollary \ref{corDLp} are found in Sections \ref{sec2}-\ref{sec5}. \medskip

Our second main goal is to find large classes of manifolds where the $L^2$-spectrum of the Dirac operator and its square is maximal, but also where the $L^p$-spectrum is contained in the real line. Previously, not many examples where the $L^2$ spectrum of a Dirac operator is maximal were known, and they were mainly in settings with very particular requirements on the metric, e.g. by Kawai in \cites{Kaw1,Kaw2} and B\"ar in \cite{Barhyp}, and only for the classical Dirac operator. Kawai considered spin manifolds with an end, whose metric can be written in radial polar coordinates outside a compact set  such that  the sectional curvature decays in absolute value at a rate $cr^{-2},$ where $r$ is the radial distance function to a fixed point. By constructing approximate eigenspinors he proved that $\sigma(\Dc_2)=\mathbb{R}$ on these manifolds.   B\"ar considered hyperbolic manifolds with finite volume.  With our the $L^p$ independence result, but also via the generalized Weyl criterion of the first author and Lu, we are able to expand the collection of manifolds where one can prove that the  $L^2$-spectrum of a general Dirac operator and its square is maximal, and coincides with the $L^p$-spectrum for all $p\in [1,\infty)$. \medskip

In Section \ref{S6} we find two very general classes of manifolds where the $L^p$-spectrum of the square Dirac operator is the nonnegative real line for all $p\in[1,\infty]$, by using the $L^p$-independence result of the spectrum and computing instead the $L^1$-spectrum. One example treats the classical Dirac operator over manifolds with an end in a special warped product form, and the other manifolds with asymptotically nonnegative Ricci curvature that admit asymptotically $D^2$-harmonic spinors. Using Theorem \ref{thmD2Lp} we are also able to find large classes of manifolds where the $L^2$ spectrum of the Dirac operator is maximal, in other words it is $\mathbb{R}$, but also where $\sigma(D_p)=\mathbb{R}$ for all $p\in[1,\infty)$. Our examples provide broad classes of manifolds where the $L^2$-spectrum of Dirac operators and their square is maximal. \medskip

In Section \ref{S7} we use the generalized Weyl criterion by the first author and Lu \cite{ChLu2} to find another large class of manifolds where the $L^2$ spectrum of the Dirac operator and its square is maximal. We consider manifolds with asymptotically nonnegative Ricci curvature in the radial direction, which admit an asymptotically $D^2$-harmonic spinor. We also consider asymptotically flat manifolds in the sense of Bartnik, since they admit a Witten spinor that can be used to produce test eigenspinors for the classical square Dirac operator. Finally in Appendix \ref{sec_def_D} we prove the technical result that the generator of the heat semigroup of the Dirac operator on $L^p$ coincides with the square of the Dirac operator acting on $L^p$.

\section{Preliminaries} \label{sec2}

\subsection{Spinorial Preliminaries}

Let $(M^n,g)$ be a complete Riemannian manifold $(M^n,g)$. Let $S\to M$ be a \emph{Clifford bundle}, in other words it is a bundle of Clifford modules over $M$ and equipped with a fiberwise Hermitian metric $\<.,.\>$, a metric connection $\nabla$ on $S$ such that the Clifford action is skew-adjoint with respect to $\<.,.\>$, and $\nabla$ is compatible with the Levi-Civita connection on $M$, cf. \cite{Roe}*{Def.~3.4}. The sections of $S$ will be called \emph{spinors}.\medskip

The Riemannian metric induces an $L^2$-inner product on spinors over $M^n$  by \[( \varphi, \eta ) \define \int_M \<\varphi, \eta\>dv\] where $dv$ is the Riemannian measure.\medskip

Let $D_c$ be the associated Dirac operator defined on compactly supported smooth spinors $C^\infty_c(S)$, see \cite{Roe}*{Chap.~3}.  When $M$ is spin and $S$ is the associated spinor bundle, we will refer to the associated Dirac operator as the \emph{classical Dirac operator} denoted by $\Dc$. Note that if $M$ is spin, we always assume that a spin structure is fixed and already chosen. Many of the interesting properties that Dirac operators exhibit  follow from the study of the closely related square Dirac operator, $D^2$. By \cite{Roe}*{(3.8)}  there exists an operator  $\mathcal{R}\in \mathrm{End}(S)$ such that the square Dirac operator satisfies the following Weitzenb\"ock formula
\begin{align}\label{eq:Wei}D^2=\nabla^*\nabla +\mathcal{R}.
\end{align}
The operator $\mathcal{R}$ is known as the \emph{Clifford contraction} and acts as a tensor on spinors. This decomposition allows us to consider the square Dirac operator as a Schr\"odinger type operator, and hence obtain useful heat kernel bounds depending on the behavior of the potential-like term $\mathcal{R}$. For the classical Dirac operator $\Dc$ the above  Weitzenb\"ock formula is also referred to as a Schr\"odinger-Lichnerowicz formula.

\begin{example}
\begin{enumerate}[(i)]
\item In the case that $M$ is spin, $S$ is the associated spinor bundle and $\Dc$ is the classical Dirac operator, then $\mathcal{R}=\mathrm{scal}/4$ where $\mathrm{scal}$ is the scalar curvature of $M^n$. Recall that the scalar curvature of $M^n$ is the trace of the Ricci tensor.
\item The complexified exterior bundle of an oriented Riemannian manifold equipped with its natural metric and connection admits the structure of a Clifford bundle, see \cite{Roe}*{Ex.~3.19}. In this setting the associated Dirac operator is the de Rham operator  $d^*+d$. Then $\mathcal{R}$ is determined by the Riemannian curvature tensor of the underlying manifold, which in the  case of one-forms simplifies to the Ricci curvature tensor \cite{BGV}*{p.~130}.
\item  For complex manifolds $M$ the exterior bundle of the complexified tangent bundle carries the structure of a Clifford bundle, see \cite{BGV}*{p.~135}. In this setting smooth spinors are the smooth differential forms on $M$, and the associated Dirac operator is  the Dolbeaut operator $\sqrt{2}(\bar{\partial}+\bar{\partial}^*)$ plus an endomorphism. In the case of
K\"ahler manifolds this endomorphism vanishes and then the Dirac operator coincides with the Dolbeaut operator.
\end{enumerate}
\end{example}

\subsection{Spectral preliminaries}

For an operator $H$ on a Banach space $\mathcal{B}$, we say that a point $\lambda \in \mathbb{C}$ is a point in the \emph{resolvent set} of $H$, $\rho(H)$,  if $H- \lambda I$ has a bounded inverse on  $\mathcal{B}$. If  $\lambda \in \rho(H)$, then $(H- \lambda I)^{-1}$ is the \emph{resolvent operator}. The complement of the resolvent set in $\mathbb{C}$ is the \emph{spectrum} of the operator, $\sigma(H)$. In the general case the domain of an operator $H$ will be denoted $\text{dom}(H)$.

\begin{Def}
For $p\in [1,\infty)$ we denote by $D_p$ the completion of $D_c\colon C_c^\infty(S)\subset L^p(S)\to C_c^\infty(S)\subset L^p(S)$ with respect to the graph norm as an operator from $L^p$ to $L^p$. To simplify notation, we will often write $D_p\colon L^p\to L^p$.
\end{Def}

We will also refer to the Dirac operator as $D$ when the domain and context are clear.

\section{Semigroup Properties}

In this section we collect  properties of the $L^p$-semigroup associated to a general square Dirac operator $D^2$. Throughout this section $S$ is a Clifford bundle over a complete Riemannian manifold $(M,g)$ and $D$ is the associated Dirac operator. From now on we denote the bound of an operator $A\colon L^p \to L^q$ by $\|A\|_{p \to q}$.\medskip

Denote by $\Delta_2$ the nonnegative definite Laplacian on the space of $L^2$-integrable functions. The $L^p$-independence of the spectrum of $D^2_{{p}}$ relies on specific estimates for its heat kernel. The following generalized Kato inequality between $D^2$ and the operator $\Delta -K_1$ over $C_c^\infty(S)$, whenever the Clifford contraction is bounded below, is a key ingredient for eventually proving the above estimates.

\begin{lem} \label{Dom1} Let $D$ be a Dirac operator over a Clifford bundle $S$. Suppose that  the Clifford contraction  $\mathcal{R}$ in \eref{eq:Wei} satisfies $\mathcal{R}\geq - K_1$ for some $K_1\geq 0$. Then for any smooth compactly supported section of the Clifford bundle $\varphi \in C_c^\infty(S)$ we have the pointwise estimate
 \begin{align*} {\rm Re} \< D^2 \varphi, \varphi \>  \geq |\varphi|(\Delta  -  K_1) |\varphi|.
 \end{align*}
\end{lem}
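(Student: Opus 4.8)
The plan is to prove the pointwise inequality by combining the Weitzenböck formula \eqref{eq:Wei} with the classical Kato inequality for the connection Laplacian. First I would use \eqref{eq:Wei} to write $\operatorname{Re}\langle D^2\varphi,\varphi\rangle = \operatorname{Re}\langle \nabla^*\nabla\varphi,\varphi\rangle + \operatorname{Re}\langle\mathcal R\varphi,\varphi\rangle$. Since $\mathcal R$ is a self-adjoint endomorphism with $\mathcal R\geq -K_1$, the last term is $\geq -K_1|\varphi|^2 = -K_1|\varphi|\,|\varphi|$ pointwise, so it suffices to establish $\operatorname{Re}\langle\nabla^*\nabla\varphi,\varphi\rangle \geq |\varphi|\,\Delta|\varphi|$ pointwise, where $\Delta$ is the nonnegative Laplacian on functions.

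Next I would prove this refined Kato inequality. Away from the zero set of $|\varphi|$, the function $|\varphi|$ is smooth, and one computes $\Delta|\varphi|^2 = -2\operatorname{Re}\langle\nabla^*\nabla\varphi,\varphi\rangle + 2|\nabla\varphi|^2$ (using $\Delta = -\operatorname{tr}\nabla^2$ on functions and the compatibility of $\nabla$ with the Hermitian metric). On the other hand $\Delta|\varphi|^2 = 2|\varphi|\Delta|\varphi| - 2|\nabla|\varphi||^2$. Equating gives $\operatorname{Re}\langle\nabla^*\nabla\varphi,\varphi\rangle = |\varphi|\Delta|\varphi| + |\nabla\varphi|^2 - |\nabla|\varphi||^2$, and the Kato inequality $|\nabla|\varphi||\leq|\nabla\varphi|$ (valid pointwise, and following from Cauchy–Schwarz applied to $|\varphi|\,\partial_X|\varphi| = \operatorname{Re}\langle\nabla_X\varphi,\varphi\rangle$) yields $\operatorname{Re}\langle\nabla^*\nabla\varphi,\varphi\rangle \geq |\varphi|\Delta|\varphi|$ on the open set where $\varphi\neq 0$.

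The main obstacle is handling the zero set of $\varphi$, where $|\varphi|$ need not be differentiable and the identities above break down. The standard device is to regularize: set $u_\eps \define \sqrt{|\varphi|^2+\eps^2}$, which is smooth everywhere since $|\varphi|^2$ is smooth. One computes $u_\eps\Delta u_\eps = |\varphi|\Delta|\varphi|\cdot(\text{something})$—more precisely, $\Delta u_\eps^2 = \Delta|\varphi|^2$ and $\Delta u_\eps^2 = 2u_\eps\Delta u_\eps - 2|\nabla u_\eps|^2$ with $|\nabla u_\eps|^2 = |\varphi|^2|\nabla|\varphi||^2/u_\eps^2 \leq |\nabla|\varphi||^2 \leq |\nabla\varphi|^2$ (the first inequality interpreted suitably on the good set, and $u_\eps|\nabla u_\eps| = |\varphi|\,|\nabla|\varphi||$ extends continuously by zero). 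This gives $\operatorname{Re}\langle\nabla^*\nabla\varphi,\varphi\rangle \geq u_\eps\Delta u_\eps$ pointwise everywhere, and letting $\eps\to 0$ recovers $\operatorname{Re}\langle\nabla^*\nabla\varphi,\varphi\rangle \geq |\varphi|\Delta|\varphi|$ on all of $M$, with the inequality understood in the appropriate (distributional or pointwise-a.e.) sense at points of the zero set. Combining with the bound on the $\mathcal R$-term completes the proof.

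I would remark that this is the Dirac-operator analogue of the Kato-type inequality used for Laplace-type operators (e.g. in \cite{HSU}), and that the only structural input beyond the classical Kato inequality is the lower bound on the Clifford contraction supplied by the hypothesis $\mathcal R\geq -K_1$.
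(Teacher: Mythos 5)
Your proof follows the same route as the paper's: decompose via the Weitzenb\"ock formula \eqref{eq:Wei}, use the Bochner-type identity $\operatorname{Re}\langle\nabla^*\nabla\varphi,\varphi\rangle = \tfrac12\Delta|\varphi|^2 + |\nabla\varphi|^2 = |\varphi|\Delta|\varphi| - |d|\varphi||^2 + |\nabla\varphi|^2$, and finish with the Kato inequality $|d|\varphi||\leq|\nabla\varphi|$ and the lower bound on $\mathcal R$. The only difference is that you carefully regularize with $u_\eps=\sqrt{|\varphi|^2+\eps^2}$ to handle the zero set of $\varphi$, a point the paper's proof silently glosses over; this is a welcome refinement but not a different method.
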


\begin{proof}
 By the  Weitzenb\"ock  formula we have $D^2 = \nabla^*\nabla +\mathcal{R}$.
 Using
 \begin{align*}
  \< \nabla^*\nabla \varphi, \varphi\> =&\frac{1}{2} \Delta |\varphi|^2 + |\nabla \varphi|^2
  =|\varphi|\Delta |\varphi| -|d|\varphi||^2 + |\nabla \varphi|^2 \end{align*}
and the Kato inequality $|d|\varphi||\leq |\nabla \varphi|$ , we obtain
 \begin{align*}
 \rm{Re} \< D^2 \varphi, \varphi \> &=\rm{Re} \<  \nabla^*\nabla  \varphi, \varphi \>  +  \< \mathcal{R}(\varphi),\varphi\>\\
 & = |\varphi|\Delta |\varphi| -|d|\varphi||^2 + |\nabla \varphi|^2 +  \<\mathcal{R}(\varphi),\varphi\>
 \geq  |\varphi| (\Delta  - K_1) |\varphi|
 \end{align*}
given our lower bound on the Clifford contraction $\mathcal{R}$.
\end{proof}

We  denote by $D_2^2$ the unique self-adjoint extension of the square Dirac operator on  $L^2(S)$ which is given by the Friedrichs extension.  Lemma~\ref{Dom1} together with the
 same calculation as in the proof of \cite{HSU}*{Thm.~3.3} shows that $D_2^2$ and $\Delta_2 - K_1$
 satisfy Kato's inequality in the sense of condition $(e)$ of Theorem 2.15 in \cite{HSU}, where
 in our case $H=D_2^2,$ $\ \mathcal{H}=L^2(S)$, $\ K= \Delta_2-K_1$, $\ \mathcal{K}=L^2(M)$ and
$\ \mathcal {K}^+=\overline{C_c^\infty(M,\mathbb R_{\geq 0})}^{L^2}$. The following result is
 then immediate from $(a)$ in the same theorem.

\begin{prop} \label{Dom2}
Suppose that  $\mathcal{R}\geq -  K_1$ for some $K_1\geq 0$. Then the semigroup $e^{-t (\Delta_2 -K_1)}$  on $L^2(M)$ dominates the semigroup $e^{-tD_2^2}$,  in other words for any $\varphi \in \mathrm{dom}(D_2^2)\subset L^2(S)$
\[
\left|e^{-tD_2^2} \varphi \right| \leq  e^{-t (\Delta_2 -K_1)} \left| \varphi \right|.
\]
\end{prop}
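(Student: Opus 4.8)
The plan is to deduce the claim from the abstract domination criterion of Hundertmark--Simon--Uhlenbrock, \cite{HSU}*{Thm.~2.15}, whose hypotheses have essentially been assembled in the discussion preceding the statement. Concretely, I would take $H=D_2^2$ on $\mathcal H=L^2(S)$ and $K=\Delta_2-K_1$ on $\mathcal K=L^2(M)$, with positive cone $\mathcal K^+=\overline{C_c^\infty(M,\mathbb R_{\geq 0})}^{L^2}$, and let the fibrewise modulus $\varphi\mapsto|\varphi|$ play the role of the lattice map linking $\mathcal H$ and $\mathcal K$. First I would record the elementary facts needed to apply that theorem: $\Delta_2$ generates a Markovian, hence positivity preserving, semigroup on $L^2(M)$, so $e^{-t(\Delta_2-K_1)}=e^{tK_1}e^{-t\Delta_2}$ is positivity preserving as well; and both $D_2^2$ and $\Delta_2-K_1$ are self-adjoint and bounded below, so the associated quadratic forms are closed and semibounded.

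The substantive step is to verify condition~$(e)$ of \cite{HSU}*{Thm.~2.15}, the abstract Kato inequality relating the quadratic forms of $H$ and $K$: for every $\varphi$ in the form domain of $D_2^2$ the function $|\varphi|$ must lie in the form domain of $\Delta_2$, and the inequality between the two forms must hold when tested against $\operatorname{sgn}(\bar\varphi)\psi$ versus $\psi$ for $0\le\psi$ in the form domain of $\Delta_2-K_1$; equivalently, the distributional inequality $\mathrm{Re}\langle D^2\varphi,\varphi\rangle\ge|\varphi|(\Delta-K_1)|\varphi|$. On the core $C_c^\infty(S)$ this is exactly Lemma~\ref{Dom1}. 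To promote it to the form domain I would follow, essentially line by line, the argument of \cite{HSU}*{Thm.~3.3}, originally carried out for Schr\"odinger operators: regularize the modulus as $|\varphi|_\eps=\sqrt{|\varphi|^2+\eps^2}-\eps$ to make sense of $\operatorname{sgn}(\bar\varphi)$ near the zero set of $\varphi$, apply the pointwise bound from Lemma~\ref{Dom1} on a $C_c^\infty$ core, and pass to the limits $\eps\to0$ and through the form closure, using $\mathcal R\ge -K_1$ to control the potential term. Nothing in that computation is special to the Laplacian: it goes through verbatim with $\nabla^*\nabla+\mathcal R$ in place of a Schr\"odinger operator. Once $(e)$ is established, condition~$(a)$ of \cite{HSU}*{Thm.~2.15} is precisely the asserted domination $|e^{-tD_2^2}\varphi|\le e^{-t(\Delta_2-K_1)}|\varphi|$, which finishes the proof.

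The main obstacle is exactly this last approximation/regularization step: Lemma~\ref{Dom1} is a pointwise statement about smooth compactly supported spinors, whereas condition~$(e)$ is a statement about form domains, so one must justify that it suffices to test against the core $C_c^\infty$, that $|\varphi|\in\mathrm{dom}(\Delta_2^{1/2})$ for $\varphi$ in the form domain of $D_2^2$, and that the singular set $\{\varphi=0\}$, where $\operatorname{sgn}(\bar\varphi)$ is undefined, causes no difficulty. All of this is identical to the classical Schr\"odinger situation treated in \cite{HSU}, so it can simply be cited rather than reproduced. I would also emphasize what is \emph{not} needed: we never use that $e^{-tD_2^2}$ is positivity preserving or a contraction; the domination we establish is one-directional and fully compatible with $e^{-tD_2^2}$ having operator norm as large as $e^{tK_1}$.
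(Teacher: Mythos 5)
Your proof is correct and follows essentially the same route as the paper: you identify the same data $H=D_2^2$, $\mathcal{H}=L^2(S)$, $K=\Delta_2-K_1$, $\mathcal{K}=L^2(M)$, $\mathcal{K}^+=\overline{C_c^\infty(M,\mathbb{R}_{\geq 0})}^{L^2}$, verify condition~$(e)$ of \cite{HSU}*{Thm.~2.15} by combining Lemma~\ref{Dom1} with the regularization argument of \cite{HSU}*{Thm.~3.3}, and then read off the domination from condition~$(a)$. One small factual correction: the reference \cite{HSU} is by Hess--Schrader--Uhlenbrock, not Hundertmark--Simon--Uhlenbrock.
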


Proposition~\ref{Dom2} is essential for the remaining results of this section. First of all, given that we have a complete Riemannian manifold with Ricci curvature bounded below, the heat operator $e^{-t \Delta}$ is a Markov semigroup which is a contraction on $L^p$ for all $1\leq p \leq \infty$ \cite{Dav2}.  Combining this fact with Proposition~\ref{Dom2} we immediately get the following lemma.
\begin{lem} \label{Dom3}
Under the assumptions of Proposition \ref{Dom2} the heat operator \[e^{-t D^2}\colon L^2(S)\cap L^p(S)\to L^2(S)\cap L^p(S)\]  can be extended to a bounded operator on $L^p\to L^p$ for all $p\in [1,\infty)$. In the case $p=\infty$, $e^{-t D^2}$ is defined as the dual of the operator on $L^1$. Even though it is not necessarily a contraction on $L^p$,  for all $t\geq 0$ and $p\in [1,\infty]$ we have the upper bound
\[
\|e^{-t D^2}\|_{p\to  p} \leq  e^{K_1 t}.
\]
\end{lem}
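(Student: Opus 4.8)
The plan is to derive both the $L^p$-boundedness and the exponential bound directly from the pointwise domination in Proposition~\ref{Dom2}, using that $e^{-tD_2^2}$ is a contraction on $L^2(S)$ (because $D_2^2\geq 0$) and that, as recalled just above the statement, the heat semigroup $e^{-t\Delta}$ of the Laplacian on functions is an $L^p$-contraction for every $p\in[1,\infty]$. A preliminary step is to upgrade the domination inequality of Proposition~\ref{Dom2} from $\mathrm{dom}(D_2^2)$ to all of $L^2(S)$: given $\varphi\in L^2(S)$, choose $\varphi_n\in\mathrm{dom}(D_2^2)$ with $\varphi_n\to\varphi$ in $L^2(S)$; then $e^{-tD_2^2}\varphi_n\to e^{-tD_2^2}\varphi$ in $L^2(S)$ and $e^{-t(\Delta_2-K_1)}|\varphi_n|\to e^{-t(\Delta_2-K_1)}|\varphi|$ in $L^2(M)$, so after passing to a common a.e.-convergent subsequence the inequality $|e^{-tD_2^2}\varphi|\leq e^{-t(\Delta_2-K_1)}|\varphi|$ passes to the limit. (Alternatively one may simply note $C_c^\infty(S)\subset\mathrm{dom}(D_2^2)$, which is all that is used below.)

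Next, fix $p\in[1,\infty)$ and take $\varphi\in L^2(S)\cap L^p(S)$. Since $K_1$ is a constant, $e^{-t(\Delta_2-K_1)}=e^{K_1 t}\,e^{-t\Delta_2}$, and $e^{-t\Delta_2}$ coincides on $L^2(M)\cap L^p(M)$ with the $L^p$ heat semigroup, hence is an $L^p$-contraction there. Applying the (extended) domination inequality to $\varphi$ and noting $|\varphi|\in L^2(M)\cap L^p(M)$ gives
\begin{align*}
\|e^{-tD_2^2}\varphi\|_{p}
&= \big\|\, |e^{-tD_2^2}\varphi| \,\big\|_{p}
\;\leq\; \big\| e^{-t(\Delta_2-K_1)}|\varphi| \big\|_{p}\\
&= e^{K_1 t}\,\big\| e^{-t\Delta_2}|\varphi| \big\|_{p}
\;\leq\; e^{K_1 t}\,\|\varphi\|_{p}.
\end{align*}
Thus the restriction $e^{-tD^2}:=e^{-tD_2^2}|_{L^2(S)\cap L^p(S)}$ maps $L^2(S)\cap L^p(S)$ into itself (into $L^2$ by $L^2$-contractivity of $e^{-tD_2^2}$, into $L^p$ by the estimate) and is bounded by $e^{K_1 t}$ for the $L^p$-norm.

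Finally, since $C_c^\infty(S)\subset L^2(S)\cap L^p(S)$ is dense in $L^p(S)$ for $p\in[1,\infty)$, the operator $e^{-tD^2}$ extends uniquely to a bounded operator on $L^p(S)$ with $\|e^{-tD^2}\|_{p\to p}\leq e^{K_1 t}$, and uniqueness of the extension makes these operators mutually consistent on overlapping $L^p$-spaces. For $p=\infty$ one simply defines $e^{-tD^2}$ on $L^\infty(S)=(L^1(S))^*$ as the Banach-space adjoint of $e^{-tD^2}$ on $L^1(S)$, so its norm is again at most $e^{K_1 t}$; this adjoint is consistent with the $L^2$-operator on $L^2\cap L^\infty$ because $e^{-tD_2^2}$ is self-adjoint. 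There is no real obstacle in this lemma; the only points requiring a little care are the passage of the domination inequality to all of $L^2(S)$ via a.e.-convergent subsequences, and the identification of $e^{-t\Delta_2}|\varphi|$ with the action of the $L^p$ heat semigroup on $|\varphi|$ so that its contractivity may be invoked — both standard once spelled out.
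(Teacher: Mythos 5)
Your proof is correct and follows essentially the same route the paper takes: combine the pointwise domination of Proposition~\ref{Dom2} with the $L^p$-contractivity of the Markov semigroup $e^{-t\Delta}$ (and the factor $e^{K_1t}$ from the constant shift), then extend from $L^2\cap L^p$ to $L^p$ by density and handle $p=\infty$ by duality. The paper leaves these steps implicit ("Combining this fact with Proposition~\ref{Dom2} we immediately get the following lemma"), whereas you have filled in the small technical points — extending the domination inequality to all of $L^2(S)$ and identifying $e^{-t\Delta_2}$ on $L^2\cap L^p$ with the $L^p$ heat semigroup — none of which change the argument.
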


We would like to remark that this result is a consequence of Kato's inequality for $D^2$ which essentially implies that whenever $\mathcal{R}\geq -  K_1$,
\[
\int_M \<d |\varphi|, d |\varphi| \> \,  dv\leq \int_M \< D \varphi, D \varphi \> \,  dv +   K_1 \int_M  |\varphi|^2  dv.
\]
for all $\varphi\in C_c^\infty(S)$. For the Laplacian on functions, one can prove Lemma \ref{Dom3} with $K_1=0$ in a more direct way,  using  Davies~\cite{Davies}*{Theorems 1.3.2, 1.3.3} because it is a real-valued operator and satisfies the pointwise Kato inequality $|\, d|u|\, | \leq | \nabla u | (=| d u |)$. \medskip

Whenever the Clifford contraction is bounded below, Lemma \ref{Dom3} allows us to define the infinitesimal generator, $H_p$, of the semigroup $e^{-tD^2}$ on $L^p$ for all $1\leq p\leq\infty$. In Appendix~\ref{sec_def_D} we prove that for $1\leq p<\infty$ the infinitesimal generator $H_p$ coincides---as one expects---with   the square of the Dirac operator $D_p^2$ on $L^p$. Note that it is still unclear whether $H_{\infty}$ coincides with $(D_{1}^*)^2$ in general. To simplify notation we will denote $H_p$ by $D_p^2$ for all $1\leq p\leq\infty$ from now on.  \medskip

Proposition~\ref{Dom2} also allows us to prove the existence of an integral kernel for the operator $e^{-t D_2^2}$ and obtain a Gaussian-type upper bound for it, whenever such an upper bound holds for the Laplacian on functions. We can prove the following.

\begin{prop} \label{Tb1}
Consider a complete Riemannian manifold $M^n$ with Ricci curvature bounded below $\rm{Ric}\geq -K_0$ for some $K_0\geq 0$. Let $S\to M$ be a Clifford bundle with a Dirac operator $D$ that satisfies \eref{eq:Wei} and assume that the Clifford contraction is bounded below, namely $\mathcal{R}\geq - K_1$ for some $K_1\geq 0$.  Then  the heat kernel, $\vec{H}(x,y,t)$, corresponding to the square Dirac operator $D_2^2$ exists, and for any  $x,y\in M$, $t>0$ and $\delta\in(0,1)$  it satisfies the upper bound

\begin{equation}
\begin{split}\label{heatg}
\left|\vec{H}(x,y,t) \right| \leq  \;C_3(\delta, n)  & \;   V^{-1/2}(x,\sqrt{t}) \,V^{-1/2}(y,\sqrt{t}) \\
&   \cdot  \exp \left[ -\, \frac{d^2(x,y)}{C_4(\delta, n)\,t} + C_5(n) \sqrt{K_0\,t}  + K_1 t \,\right]
\end{split}
\end{equation}
for some positive constants $C_3(\delta,n),  C_4(\delta,n)$ and  $C_5(n).$
\end{prop}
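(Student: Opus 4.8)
The plan is to obtain \eqref{heatg} by transferring the well-known Gaussian upper bound for the \emph{scalar} heat kernel through the domination of semigroups from Proposition~\ref{Dom2}. On a complete manifold with $\mathrm{Ric}\geq -K_0$, the heat kernel $h(x,y,t)$ of $e^{-t\Delta_2}$ satisfies, for every $\delta\in(0,1)$,
\begin{equation*}
h(x,y,t) \leq C_3(\delta,n)\, V^{-1/2}(x,\sqrt t)\, V^{-1/2}(y,\sqrt t)\,\exp\!\left[-\frac{d^2(x,y)}{C_4(\delta,n)\,t} + C_5(n)\sqrt{K_0\,t}\,\right];
\end{equation*}
this is the sharp Li--Yau-type estimate available under a mere lower Ricci bound (see \cite{saco} and the references therein). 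Since Proposition~\ref{Dom2} gives $\big|e^{-tD_2^2}\varphi\big| \leq e^{K_1 t}\, e^{-t\Delta_2}|\varphi|$ for all $\varphi\in L^2(S)$, everything reduces to showing that this pointwise domination of operators upgrades to a pointwise domination of their integral kernels.

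First I would establish the existence of $\vec H(x,y,t)$. The scalar semigroup $e^{-t\Delta_2}$ is positivity preserving and is an integral operator with a smooth, nonnegative kernel $h(\cdot,\cdot,t)$; by the standard theory of dominated operators (cf. the domination framework underlying \cite{HSU}), an operator on $L^2$ that is dominated by an integral operator with nonnegative kernel is itself an integral operator with a kernel controlled by the dominating one. Hence $e^{-tD_2^2}$ is represented by a jointly measurable kernel $\vec H(x,y,t) \in \mathrm{Hom}(S_y,S_x)$, and because $\partial_t + D^2$ is second order with smooth coefficients, local parabolic regularity makes $\vec H$ smooth in $(x,y,t)$.

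Next I would prove the pointwise bound. For $\varphi,\psi\in C_c^\infty(S)$, Proposition~\ref{Dom2} yields
\begin{align*}
\left|\int_M\!\int_M \big\langle \vec H(x,y,t)\varphi(y),\psi(x)\big\rangle\, dv(y)\,dv(x)\right|
&= \big|\big( e^{-tD_2^2}\varphi,\psi\big)\big| \\
&\leq e^{K_1 t}\int_M\!\int_M h(x,y,t)\,|\varphi(y)|\,|\psi(x)|\, dv(y)\,dv(x).
\end{align*}
Specializing to sequences $\varphi$ and $\psi$ that concentrate near $y_0$ and $x_0$ and point in prescribed unit directions in $S_{y_0}$ and $S_{x_0}$, this forces $\big|\vec H(x_0,y_0,t)\big|_{\mathrm{Hom}(S_{y_0},S_{x_0})} \leq e^{K_1 t}\, h(x_0,y_0,t)$ for almost every $(x_0,y_0)$, hence for all $(x_0,y_0)$ by the smoothness from the previous step. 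Substituting the scalar Gaussian bound then gives \eqref{heatg} verbatim, the factor $e^{K_1 t}$ accounting for the $+\,K_1 t$ summand in the exponent.

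The main obstacle is the bundle-theoretic transfer in the last step: the domination from Proposition~\ref{Dom2} only controls $|e^{-tD_2^2}\varphi|$ by the scalar quantity $e^{-t(\Delta_2-K_1)}|\varphi|$, and one must check that this suffices to bound the \emph{fiberwise operator norm} of $\vec H(x,y,t)$ (and not merely, say, some averaged or Hilbert--Schmidt quantity along the diagonal), together with the a.e.-to-everywhere upgrade that hinges on parabolic regularity. A secondary, essentially bookkeeping issue is to invoke the scalar heat kernel estimate in precisely the refined form carrying the $\sqrt{K_0 t}$ correction rather than the cruder $K_0 t$, since it is this term that propagates into $C_5(n)\sqrt{K_0 t}$ in \eqref{heatg}.
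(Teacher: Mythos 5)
Your proposal is correct and follows essentially the same route as the paper: invoke the Li--Yau/Saloff-Coste Gaussian upper bound for the scalar heat kernel under the Ricci lower bound, then transfer it to $\vec H$ via the semigroup domination of Proposition~\ref{Dom2}. The only difference is that where the paper delegates the operator-to-kernel transfer to Rosenberg's Theorem~3.5, you spell out that step explicitly with the duality/concentration argument; the underlying mechanism is the same.
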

In the above estimate $V(x,r)$ denotes the Riemannian volume of  the geodesic ball of radius $r$ centered at $x$, and $d(x,y)$ is the distance between $x, y$ induced by the Riemannian metric.

\begin{proof}
Our lower bound on Ricci curvature implies that there exists a unique heat kernel $h(x,y,t)$ for the Laplacian on functions, \cite{Chavel}*{p. 188/189} with a Gaussian type upper bound given in \cite{saco}*{Theorem 6.1}.  The pointwise domination property of Proposition~\ref{Dom2} also implies the existence of a heat kernel for  $D_2^2$, $\vec{H}(x,y,t)$. By following the proof of Theorem 3.5 in \cite{Ros}, this domination property can be used to show
\[
\left|\vec{H}(x,y,t)\right| \leq \left| h(x,y,t)\,  e^{K_1 t}    \right|.\qedhere
\]
\end{proof}

We  refer the interested reader to the book \cite{BGV}*{Chapter 2} for a formal definition of the heat semigroup for operators such as $D_2^2$ over Clifford bundles, as well as a detailed construction of its heat kernel in the case the manifold is compact. \medskip

Note that through the same argument one also has the following.
\begin{corl}

Suppose that $\Dc$ is the classical Dirac operator over a complete Riemannian spin manifold. Suppose that
 the scalar  curvature of $M$ is bounded below by  $\mathrm{scal}\geq - 4K_1$ for some $K_1 \geq 0$. Then the self-adjoint extension of $\Dc^2$ on $L^2(S)$ satisfies the domination property of Proposition \ref{Dom2}  and the upper bound of Lemma \ref{Dom3} for its semigroup. Moreover, whenever the Ricci curvature of $M$ is bounded below $\mathrm{Ric}\geq-K_0$ for some $K_0 \geq 0$, then the heat kernel of the semigroup $e^{-t \Dc_2^2}$ exists and satisfies the Gaussian upper bound of Theorem \ref{Tb1}  with $K_1=\frac{n}{4} K_0$.
\end{corl}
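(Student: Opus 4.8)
The plan is to observe that the classical Dirac operator $\Dc$ over a spin manifold is a particular instance of the general Dirac operator over a Clifford bundle, so the corollary follows simply by feeding the appropriate curvature hypotheses into Lemma~\ref{Dom1}, Proposition~\ref{Dom2}, Lemma~\ref{Dom3} and Proposition~\ref{Tb1}. The only input specific to the spin setting is the Schr\"odinger--Lichnerowicz identity which, as recorded in the Example, says that for the spinor bundle the Clifford contraction is $\mathcal{R}=\mathrm{scal}/4$.

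First I would note that, via $\mathcal{R}=\mathrm{scal}/4$, the hypothesis $\mathrm{scal}\geq -4K_1$ is literally the hypothesis $\mathcal{R}\geq -K_1$ of Proposition~\ref{Dom2}. Hence Lemma~\ref{Dom1} yields the pointwise Kato inequality ${\rm Re}\,\langle \Dc^2\varphi,\varphi\rangle \geq |\varphi|(\Delta-K_1)|\varphi|$ for all $\varphi\in C_c^\infty(S)$, Proposition~\ref{Dom2} gives the domination $|e^{-t\Dc_2^2}\varphi|\leq e^{-t(\Delta_2-K_1)}|\varphi|$ for $\varphi\in\mathrm{dom}(\Dc_2^2)$, and Lemma~\ref{Dom3} gives the bound $\|e^{-t\Dc^2}\|_{p\to p}\leq e^{K_1 t}$ for all $p\in[1,\infty]$. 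Nothing beyond quoting these statements is required for the first part.

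For the Gaussian bound the point is to convert the Ricci lower bound into the correct lower bound on $\mathcal{R}$. Since $\mathrm{scal}$ is the trace of the Ricci tensor, choosing an orthonormal frame and summing shows that $\mathrm{Ric}\geq -K_0$ implies $\mathrm{scal}\geq -nK_0$, hence $\mathcal{R}=\mathrm{scal}/4\geq -\tfrac{n}{4}K_0$. Therefore the hypotheses of Proposition~\ref{Tb1} hold with the same $K_0$ and with $K_1=\tfrac{n}{4}K_0$, so the heat kernel of $e^{-t\Dc_2^2}$ exists and satisfies the upper bound \eqref{heatg} verbatim with this value of $K_1$.

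The statement is essentially a dictionary translation, so I do not expect a substantive obstacle. The only things to be careful about are that the constant $K_1=\tfrac{n}{4}K_0$ is the one produced by the trace estimate on $\mathrm{Ric}$ (and is in general not optimal, since the scalar curvature could be much larger than $-nK_0$), and that the existence of the $L^2$ self-adjoint extension $\Dc_2^2$ and of its heat kernel has already been secured by the general discussion preceding Proposition~\ref{Tb1}, so that no separate construction is needed in the spin case.
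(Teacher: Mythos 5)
Your proof is correct and follows the same route the paper intends: the corollary is stated with ``Note that through the same argument one also has the following,'' so the paper's proof is precisely the dictionary translation you give, namely specializing $\mathcal{R}=\mathrm{scal}/4$ and using the trace inequality $\mathrm{Ric}\geq -K_0 \Rightarrow \mathrm{scal}\geq -nK_0$ to feed $K_1=\tfrac{n}{4}K_0$ into Proposition~\ref{Tb1}.
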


\section{\texorpdfstring{$L^p$}{Lp}-Independence of the Spectrum of  \texorpdfstring{$D_p^2$}{D2}}

In this section we prove the main part of Theorem \ref{thmD2Lp} -- the $p$-independence of the spectrum of $D^2_p$. We will prove the second part of the theorem concerning the spectrum of $D_p$ in the next section.  \medskip

We begin with some preliminary definitions and notations. Recall that $\rho(D^2_p)$ denotes the resolvent set of $D^2_p$, and  $\sigma(D^2_p)=\mathbb{C}\setminus \rho(D^2_p)$ is its spectrum. We will show that under the assumptions of Theorem~\ref{thmD2Lp} the resolvent set is  independent of $p$. For this, it suffices to show that $\rho(D^2_p)\subset \rho(D^2_2)$ and $\rho(D^2_2)\subset \rho(D^2_p)$ for all $p$. The first containment is the one that holds under more restrictive geometric assumptions, whereas the other one holds in a more general setting as we will see.\medskip

We note that in general $\sigma(D^2_p) =   \sigma(D^2_q)$ whenever $1/p + 1/q=1$ by duality even in the case $p=1$,  since $ D^2_\infty  =    H_\infty$ is by definition the dual operator to $ D^2_1$.\medskip

On $M^n$ we denote as $B(x,r)$ the geodesic ball of radius $r$ centered at $x$, and the Riemannian volume of $B(x,r)$  is again denoted as $V(x,r)$. The following property is key when proving an $L^p$-independence result for the spectrum.

\begin{Def} \label{unisubexp}
A Riemannian manifold is said to have \emph{uniformly subexponential volume growth}, if for any $\eps>0$ there exists a constant $C{(\eps)}$, independent of $x$, such that for any $r>0$ and $x\in M$
\[
V(x,r)\leq C{(\eps)} \, V(x,1) \, e^{\eps \, r }.
\]
\end{Def}

Define $\mu(x)\define V(x,1)^{-1/2}$. The assumption on the uniformly subexponential volume growth implies in particular the following estimate:
\begin{lem}{\cite{sturm}*{Proposition 1}} \label{VolCom2}
Let $M^n$ be a Riemannian manifold whose volume grows uniformly subexponentially. Then for any $\beta >0$
\[
\sup_{x\in M}  \int_M \mu(x) \, \mu(y) \, e^{-\beta \, d(x,y)} \, dv(y) < \infty,
\]
where $d$ is the distance function on $(M,g)$.
\end{lem}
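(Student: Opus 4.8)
The plan is to prove the estimate $\sup_{x\in M}\int_M \mu(x)\,\mu(y)\,e^{-\beta\,d(x,y)}\,dv(y)<\infty$ by slicing the integral over annuli centered at $x$ and using the uniformly subexponential volume growth to control the volume of each annulus against $V(x,1)=\mu(x)^{-2}$ and $V(y,1)=\mu(y)^{-2}$. First I would fix $x\in M$ and $\beta>0$, and decompose $M=\bigcup_{k\ge 0} A_k$ where $A_0=B(x,1)$ and $A_k=B(x,k+1)\setminus B(x,k)$ for $k\ge 1$, so that on $A_k$ we have $d(x,y)\ge k$ and hence $e^{-\beta d(x,y)}\le e^{-\beta k}$.

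The main technical point is to bound $\mu(y)$ from above for $y\in A_k$ in terms of $\mu(x)$. Since $d(x,y)\le k+1$, the ball $B(x,1)$ is contained in $B(y,k+2)$, so by the uniformly subexponential volume growth applied at $y$ with a chosen $\eps\le\beta/4$ (say), $V(x,1)\le V(y,k+2)\le C(\eps)\,V(y,1)\,e^{\eps(k+2)}$, which gives $\mu(y)^2=V(y,1)^{-1}\le C(\eps)\,e^{\eps(k+2)}\,V(x,1)^{-1}=C(\eps)\,e^{\eps(k+2)}\,\mu(x)^2$, i.e. $\mu(y)\le \sqrt{C(\eps)}\,e^{\eps(k+2)/2}\,\mu(x)$. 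Next I would bound the volume $\vol(A_k)\le V(x,k+1)$, again by uniformly subexponential growth at $x$: $V(x,k+1)\le C(\eps)\,V(x,1)\,e^{\eps(k+1)}=C(\eps)\,\mu(x)^{-2}\,e^{\eps(k+1)}$. Putting these together, the contribution of $A_k$ to the integral is at most
\[
\mu(x)\cdot \sqrt{C(\eps)}\,e^{\eps(k+2)/2}\mu(x)\cdot e^{-\beta k}\cdot C(\eps)\,\mu(x)^{-2}\,e^{\eps(k+1)} = C(\eps)^{3/2}e^{\eps(3+(3/2)k)}\,e^{-\beta k},
\]
wait—one must be slightly careful that $\mu(x)\cdot\mu(x)\cdot\mu(x)^{-2}=1$, so the $\mu$-factors cancel and the bound is independent of $x$, equal to $C(\eps)^{3/2}e^{3\eps}\,e^{-(\beta-\frac{3}{2}\eps)k}$.

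Finally, choosing $\eps$ small enough that $\beta-\tfrac{3}{2}\eps>0$ (e.g. $\eps=\beta/3$), the geometric series $\sum_{k\ge 0} e^{-(\beta-\frac32\eps)k}$ converges, and summing the per-annulus bounds gives a finite constant depending only on $\beta$ (through $C(\eps)$ and the ratio $\eps/\beta$) but \emph{not} on $x$; taking the supremum over $x$ then yields the claim. The step I expect to require the most care is the uniformity in $x$: one has to make sure every application of the volume growth hypothesis uses the \emph{same} constant $C(\eps)$ (which is legitimate since Definition~\ref{unisubexp} gives a constant independent of the center point), and that the powers of $\mu(x)$ coming from the three sources — the explicit $\mu(x)$ factor, the bound on $\mu(y)$, and the annulus volume bound — cancel exactly, leaving an $x$-independent geometric series. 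Since the paper attributes this to \cite{sturm}*{Proposition 1}, I would present it as a short self-contained proof along these lines rather than reproving Sturm's result in full generality.
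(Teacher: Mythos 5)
Your proof is correct; the paper itself does not prove this lemma but simply cites it to Sturm's Proposition~1, and your annulus decomposition around $x$, with the volume-growth hypothesis applied once at $y$ (to compare $\mu(y)$ with $\mu(x)$) and once at $x$ (to bound $\vol(A_k)$), is exactly the standard argument one would use there. The essential point you correctly identify is the cancellation $\mu(x)\cdot\mu(x)\cdot\mu(x)^{-2}=1$, which makes the per-annulus bound $x$-independent. There is a trivial arithmetic slip: $\eps(k+2)/2+\eps(k+1)=\eps(\tfrac{3}{2}k+2)$, not $\eps(\tfrac{3}{2}k+3)$, but this only changes the constant in front of the geometric series and has no effect on the conclusion.
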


To prove the $L^p$-independence of the spectrum of $D_p^2$, we will use the heat kernel estimate technique that was introduced by \cite{sturm}, although we need to be a bit more careful with our arguments as our operator is defined on spinors. A set of auxiliary results is necessary, which we outline below. \medskip

We begin by defining the set of functions
\[
\Psi_{\eps} \define \{ \psi \in {\mathcal C}_0^{1}(M) \, | \, |\n \psi|\leq \eps  \}
\]
where ${\mathcal C}_0^{1}(M)$ are compactly supported $C^1$-functions on $M$. The set $\Psi_\eps$ will be used to define  perturbations of the operator. Note that for any pair of points $x, y \in M$, the Lipschitz property of $\psi\in \Psi_\epsilon$ implies
\begin{align*}\sup\{|\psi(x)-\psi(y)|\, | \, \psi \in  \Psi_{\eps}\} \leq \eps \, d(x,y).\end{align*}

We consider perturbations of the operator $D^2_2$ of the form $e^{\psi} D^2_2 e^{-\psi}$. The next lemma is an analytic result that is independent of the geometric conditions on the manifold.

\begin{lem} \label{lemkato}
For any compact subset $K$ of the resolvent set of $\rho(D^2_2)$ there exist $\eps>0$ and $C<\infty$ such for all $\xi \in K$ and $\psi\in \Psi_{\eps}$, $\xi$ belongs to the resolvent set of the operator $e^{\psi}D^2_2 e^{-\psi}$ and
\[
\|(e^{\psi} D^2_2 e^{-\psi}-\xi)^{-1}\|_{2\to 2} \leq C.
\]
\end{lem}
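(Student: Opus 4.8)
The plan is to treat $e^{\psi}D_2^2 e^{-\psi}$ as an analytic (in fact polynomial) perturbation of $D_2^2$ and apply a Neumann series argument, using the fact that the perturbation is relatively bounded with small relative bound once $\eps$ is small. First I would expand the conjugated operator on $C_c^\infty(S)$: since $\psi$ is a smooth compactly supported scalar function, a direct computation with the Weitzenb\"ock formula (or just with $D^2 = \nabla^*\nabla + \mathcal{R}$) gives
\[
e^{\psi} D^2 e^{-\psi} = D^2 + L_\psi, \qquad L_\psi = -2\nabla_{\mathrm{grad}\,\psi} + (\Delta\psi - |\nabla\psi|^2 - \tfrac{?}{} ) \cdot \mathrm{id} ,
\]
where the precise zeroth-order term is $(\Delta\psi)\,\mathrm{id} - |\nabla\psi|^2\,\mathrm{id}$ up to sign conventions, and the first-order term is a Clifford-type first-order operator with coefficients controlled by $|\nabla\psi|\le\eps$. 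The key point is that $\|L_\psi\|$ as an operator from $\mathrm{dom}(D_2^2)$ (with graph norm) into $L^2$ is bounded by $C(\eps + \eps^2)$ with $C$ independent of $\psi\in\Psi_\eps$; controlling the first-order term requires dominating $\|\nabla\varphi\|_2$ by the graph norm of $D_2^2$, which follows from Kato's inequality / the Weitzenb\"ock identity as already recorded (integrating $\langle D^2\varphi,\varphi\rangle \ge \|\nabla\varphi\|_2^2 + \langle\mathcal{R}\varphi,\varphi\rangle$, hence $\|\nabla\varphi\|_2^2 \le \|D^2\varphi\|_2\|\varphi\|_2 + K_1\|\varphi\|_2^2$). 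Thus $L_\psi$ is $D_2^2$-bounded with relative bound $\to 0$ as $\eps\to 0$, uniformly in $\psi$.

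Next I would fix the compact set $K\subset\rho(D_2^2)$. By compactness and the uniform boundedness of the resolvent on $K$, there is $M_0 = \sup_{\xi\in K}\|(D_2^2-\xi)^{-1}\|_{2\to 2} < \infty$, and moreover $\sup_{\xi\in K}\|D_2^2(D_2^2-\xi)^{-1}\|_{2\to 2} \le 1 + |K|_{\infty} M_0 =: M_1 < \infty$ where $|K|_\infty = \sup_{\xi\in K}|\xi|$. For $\varphi\in\mathrm{dom}(D_2^2)$ write $\varphi = (D_2^2-\xi)^{-1}g$; then $\|L_\psi(D_2^2-\xi)^{-1}g\|_2 \le C(\eps+\eps^2)\big(\|D_2^2(D_2^2-\xi)^{-1}g\|_2 + \|(D_2^2-\xi)^{-1}g\|_2\big) \le C(\eps+\eps^2)(M_1+M_0)\|g\|_2$. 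Choosing $\eps>0$ small enough (depending only on $M_0, M_1$, i.e.\ only on $K$) so that $C(\eps+\eps^2)(M_0+M_1) \le \tfrac12$, we get $\|L_\psi(D_2^2-\xi)^{-1}\|_{2\to 2}\le \tfrac12$ for all $\xi\in K$, $\psi\in\Psi_\eps$. Then the identity $e^\psi D_2^2 e^{-\psi} - \xi = \big(I + L_\psi(D_2^2-\xi)^{-1}\big)(D_2^2-\xi)$ shows $\xi\in\rho(e^\psi D_2^2 e^{-\psi})$, with
\[
(e^\psi D_2^2 e^{-\psi}-\xi)^{-1} = (D_2^2-\xi)^{-1}\sum_{k\ge 0}\big(-L_\psi(D_2^2-\xi)^{-1}\big)^k,
\]
so $\|(e^\psi D_2^2 e^{-\psi}-\xi)^{-1}\|_{2\to 2} \le 2M_0 =: C$, uniformly in $\xi\in K$ and $\psi\in\Psi_\eps$. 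One small technical point to address is that $e^\psi D_2^2 e^{-\psi}$ should be interpreted as the closure of the operator defined on $C_c^\infty(S)$ (or equivalently as $e^\psi$ applied to $\mathrm{dom}(D_2^2)$), and one checks that the algebraic identity above extends from $C_c^\infty(S)$ to the closure since $\psi$ is bounded with bounded derivatives, so multiplication by $e^{\pm\psi}$ preserves $\mathrm{dom}(D_2^2)$.

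The main obstacle I expect is the bookkeeping of the first-order term $-2\nabla_{\mathrm{grad}\,\psi}$ in $L_\psi$: unlike the zeroth-order terms, it is not bounded on $L^2$ by itself, and one must genuinely use the Weitzenb\"ock/Kato estimate $\|\nabla\varphi\|_2^2 \lesssim \|D^2\varphi\|_2\|\varphi\|_2 + K_1\|\varphi\|_2^2$ to absorb it into the graph norm — this is where the hypothesis $\mathcal{R}\ge -K_1$ quietly re-enters, even though the lemma is advertised as purely analytic. (If one prefers, one can instead estimate $\|\nabla_{\mathrm{grad}\,\psi}(D_2^2-\xi)^{-1}g\|_2$ directly by interpolation/the closed graph theorem, but the Weitzenb\"ock route is cleanest.) Everything else is a routine Neumann series estimate; the uniformity in $\psi$ is automatic because all bounds depend on $\psi$ only through $\|\nabla\psi\|_\infty \le \eps$ and $\|\Delta\psi\|_\infty$, and the latter can be absorbed once one notes — or one restricts $\Psi_\eps$ so that — $\Delta\psi$ is also controlled; alternatively one keeps $\Delta\psi$ only in the zeroth order term and bounds it crudely, noting it still multiplies $M_0$ and can be made small, which is the reading most consistent with how $\Psi_\eps$ is defined.
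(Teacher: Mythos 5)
Your overall plan—write $e^\psi D_2^2 e^{-\psi} = D_2^2 + L_\psi$, show $L_\psi$ is $D_2^2$-bounded with small relative bound, then apply a Neumann series—is a reasonable-looking strategy, and your handling of the first-order term $-2\nabla_{\mathrm{grad}\,\psi}$ via the integrated Weitzenb\"ock/Kato inequality is the right instinct. But there is a genuine gap that your own closing paragraph flags without actually resolving: the conjugation at the operator level produces a zeroth-order term involving $\Delta\psi$ (and more precisely second covariant derivatives of $\psi$), and that quantity is \emph{not} controlled by membership in $\Psi_\eps$. Recall $\Psi_\eps = \{\psi \in \mathcal{C}^1_0(M) : |\nabla\psi| \le \eps\}$: the functions are only $C^1$, so $\Delta\psi$ need not even exist, and even for smooth $\psi$ there is no bound on the Hessian. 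So $L_\psi$ is not uniformly small over $\Psi_\eps$, and your Neumann series constant cannot be made uniform in $\psi$. Neither of your proposed repairs closes this: bounding $\Delta\psi$ ``crudely'' fails because it can be arbitrarily large; and shrinking $\Psi_\eps$ to impose a Hessian bound would change a set that is used downstream (in Proposition~\ref{Prop1} one needs, for every pair $x,y$, a $\psi\in\Psi_\eps$ with $\psi(x)-\psi(y)\approx\eps\,d(x,y)$, and guaranteeing such $\psi$ with uniformly bounded Hessian is not possible under the standing hypotheses, which are only Ricci bounded below, not bounded geometry).

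The paper (following Sturm's Lemma~1 and Kato, Theorem VI.3.9) sidesteps exactly this issue by working with sesquilinear forms rather than with the operator itself. Writing $a(u,v)=\langle Du,Dv\rangle_{L^2} + \dots$, the perturbed form is $a_\psi(u,v) = a(e^{-\psi}u,\,e^\psi v)$, and a direct computation using $D(e^{\mp\psi}w)=e^{\mp\psi}(Dw \mp \nabla\psi\cdot w)$ gives
\[
a_\psi(u,v)-a(u,v) = \langle Du,\,\nabla\psi\cdot v\rangle - \langle \nabla\psi\cdot u,\,Dv\rangle - \langle \nabla\psi\cdot u,\,\nabla\psi\cdot v\rangle,
\]
which involves only \emph{first} derivatives of $\psi$ and is therefore bounded by $\eps\,(\|Du\|\,\|v\|+\|u\|\,\|Dv\|)+\eps^2\|u\|\,\|v\|$, uniformly over $\psi\in\Psi_\eps$. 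Kato's stability theorem for sectorial forms then yields exactly the uniform resolvent bound over the compact set $K\subset\rho(D_2^2)$ once $\eps$ is small. So the two routes differ at the crucial point: your operator-level decomposition is correct algebraically but requires one derivative of $\psi$ too many; the form-level decomposition needs only $\psi\in C^1$, which is the whole reason $\Psi_\eps$ is defined as it is. If you want to proceed along your lines you must either pass to forms at the relative-boundedness step (at which point you have essentially reproduced Sturm/Kato), or carefully enlarge the class of admissible $\psi$ to smooth functions with controlled Hessian and then re-prove everything in Proposition~\ref{Prop1} that relies on $\Psi_\eps$—a nontrivial detour.
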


\begin{proof}
The proof follows as in \cite{sturm}*{Lem.~1} since  the computation does not change when taking sections in a Clifford bundle and $H_2=D_2^2$. In particular, the proof uses Theorem VI.3.9 in \cite{Kato} to get a uniform bound for the resolvent operator of these perturbations over compact sets of the resolvent set of $D^2_2$.
\end{proof}

Our next goal is to prove the $L^p$-boundedness of some power of the resolvent operator, $(D^2_2-\xi)^{-m}$,  whenever $\xi$ belongs to $\rho(D^2_2)$. This requires the geometric assumptions on the manifold as it relies on upper Gaussian estimates for the heat kernel of $D^2_2$ and volume comparison estimates. These lead to the following strong result about the heat kernel.

\begin{lem} \label{Lem1}
Consider a complete Riemannian manifold $M^n$ and    a Dirac operator $D$ on a Clifford bundle over $M$  which satisfies \eref{eq:Wei}. Suppose that the Clifford contraction and Ricci curvature are bounded below such that $\mathcal{R} \geq - K_1$  and $\mathrm{Ric}\geq -K_0$, for some $K_1, K_0\geq 0$. Then for any $\beta >0$ there exists a negative number $\alpha$ and constant $C(n,K_0,\beta)$ such that the heat kernel of $D^2_2$ satisfies the upper bound
\begin{equation} \label{heat42}
\left|\vec{H}(x,y,t) \right|   \leq  C \, \mu(x)^2\, \sup\{t^{-n/2},1\} \,e^{-\beta \, d(x,y)}\,
 e^{- (\alpha +1) t}.
\end{equation}
\end{lem}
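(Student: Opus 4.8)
The plan is to combine the Gaussian-type upper bound of Proposition~\ref{Tb1} with the volume comparison available under $\mathrm{Ric}\geq-K_0$, and then absorb the polynomial and subexponential factors into the exponential decay coming from the distance term. First I would start from \eqref{heatg}, which gives
\[
\left|\vec H(x,y,t)\right|\leq C_3(\delta,n)\,V^{-1/2}(x,\sqrt t)\,V^{-1/2}(y,\sqrt t)\,\exp\!\left[-\frac{d^2(x,y)}{C_4(\delta,n)\,t}+C_5(n)\sqrt{K_0 t}+K_1 t\right].
\]
The goal estimate \eqref{heat42} is phrased in terms of $\mu(x)^2=V(x,1)^{-1}$ rather than the symmetric $V^{-1/2}(x,\sqrt t)V^{-1/2}(y,\sqrt t)$, with only linear (not quadratic) exponential decay in $d(x,y)$, and with an overall $e^{-(\alpha+1)t}$ factor; so all the work is in converting the bound to this coarser but more convenient form.

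The key steps, in order, are as follows. \textbf{Step 1: handle the volume factors.} Using the Bishop–Gromov volume comparison for $\mathrm{Ric}\geq-K_0$, one controls $V(x,\sqrt t)$ from below in terms of $V(x,1)$: for $t\leq 1$ one has $V(x,\sqrt t)\geq c\,t^{n/2}V(x,1)$ (comparing balls of radius $\sqrt t$ and $1$ at the same center, the ratio is bounded below by the corresponding ratio in the model space, which is $\gtrsim t^{n/2}$), and for $t\geq 1$ one has $V(x,\sqrt t)\geq c\,V(x,1)$ trivially. Hence $V^{-1/2}(x,\sqrt t)\leq C\,\sup\{t^{-n/4},1\}\,\mu(x)$, and likewise for $y$. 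This produces $\mu(x)\mu(y)\,\sup\{t^{-n/2},1\}$; to reach the stated form with $\mu(x)^2$ one uses $d(x,y)$-dependence: by the doubling/volume comparison consequence $V(y,1)\geq c\,e^{-C\sqrt{K_0}\,d(x,y)}V(x,1)$, so $\mu(y)\leq C\,e^{C\sqrt{K_0}\,d(x,y)/2}\mu(x)$, i.e. $\mu(x)\mu(y)\leq C\,\mu(x)^2 e^{C\sqrt{K_0}\,d(x,y)/2}$. \textbf{Step 2: absorb the extra $d(x,y)$-exponential and the $\sqrt{K_0 t}$ term.} The term $-d^2(x,y)/(C_4 t)$ dominates any linear term $c\,d(x,y)$ up to a bounded-in-$t$ correction: for fixed $\beta>0$ one writes $-d^2/(C_4 t)\leq -2\beta d(x,y)+C_4\beta^2 t$ (completing the square), so the combined $d$-dependence becomes $e^{-\beta d(x,y)}$ after swallowing the residual $C\sqrt{K_0}d(x,y)/2$ into another factor of $\beta d$, at the cost of a term linear in $t$. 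Similarly $C_5\sqrt{K_0 t}\leq \tfrac12 t+C$. \textbf{Step 3: collect the $t$-linear terms.} All the exponential-in-$t$ contributions—$K_1 t$, the $C_4\beta^2 t$ from completing the square, the $\tfrac12 t$ from $\sqrt{K_0 t}$, plus anything from Step~1—sum to $Ct$ for some constant $C=C(n,K_0,\beta)$ (note $K_1$ is itself controlled since $\mathcal R\geq -K_1$ is among the hypotheses, though one can also keep $K_1$ explicit). Define $\alpha\define -C-1<0$; then $e^{Ct}=e^{-(\alpha+1)t}$, giving precisely \eqref{heat42}.

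The main obstacle is \textbf{Step 1}, specifically getting the asymmetric $\mu(x)^2$ form with the right control of $\mu(y)/\mu(x)$ by an exponential in $d(x,y)$ — this is where the Ricci lower bound is really used beyond what Proposition~\ref{Tb1} already encodes, via Bishop–Gromov, and one must be careful that the constant in $V(y,1)\geq c\,e^{-C\sqrt{K_0}d(x,y)}V(x,1)$ is uniform in $x,y$ (it is, since it only depends on $n$ and $K_0$). Everything after that is elementary manipulation of exponents: completing the square to trade quadratic for linear decay in $d(x,y)$, and bookkeeping of the linear-in-$t$ terms into the single constant $\alpha$. One should also note that the constant $\alpha$ depends on $\beta$ (the worse the required decay rate $\beta$, the more negative $\alpha$ must be), which is consistent with the statement quantifying ``for any $\beta>0$ there exists $\alpha$''.
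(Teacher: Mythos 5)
Your proposal is correct and follows essentially the same route as the paper: Bishop--Gromov to convert $V^{-1/2}(x,\sqrt t)\,V^{-1/2}(y,\sqrt t)$ into $\mu(x)^2\sup\{t^{-n/2},1\}$ times an $e^{cd(x,y)}$ factor, then completing the square in the Gaussian to trade quadratic for linear decay and choosing $\gamma$ large enough to net $e^{-\beta d(x,y)}$, with all $t$-linear remainders collected into $\alpha$. The only small imprecision is in your volume-comparison step, where the bound $\mu(y)\lesssim e^{C\sqrt{K_0}\,d(x,y)/2}\mu(x)$ should also carry a polynomial factor $(1+d(x,y))^{n/2}$, which the paper (and you, in spirit) absorbs by enlarging the exponent to $\bar C(\sqrt{K_0}+1)\,d(x,y)$.
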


\begin{proof}
The proof of the lemma uses the Gaussian upper bound that we proved in Proposition \ref{Tb1}, and the assumption on Ricci curvature (see for example \cites{Char1,sturm,ChLu1} for similar estimates). \medskip

By the Bishop volume comparison Theorem, whenever $\rm{Ric}\geq -K_0$, there exists $C=C(n)$ such that
\begin{equation}\label{VolCom}
\frac{V(x,R)}{V(x,r)} \leq \,\frac{R^n}{r^{n}} \, e^{C\,\sqrt{K_0}\, R}
\end{equation}
for any $0<r\leq R$. \medskip

Taking $r<1=R$, we have
\[
V(x,r)^{-1} \leq \mu(x)^2 \, r^{-n} \, C(n, K_0),
\]
whereas
\[
V(x,r)^{-1} \leq \mu(x)^2
\]
holds for any $r\geq1$ by monotonicity of the volume. Therefore, for any $r>0$
\begin{equation}\label{VolCom4}
V(x,r)^{-1} \leq \mu(x)^2 \sup\{ r^{-n},1\} \, C(n, K_0),
\end{equation}
 The volume comparison \eref{VolCom} also yields
\begin{equation}\label{VolCom3}
\mu(y)^2 \leq C\,\mu(x)^2\,  e^{\bar C\, (\sqrt{K_0} +1) \, d(x,y)}
\end{equation}
for all $x, y \in M$ and constants $C,\bar C$ that only depend on $n$. \medskip

Moreover, for any $\gamma \in \mathbb{R}$, the exponential function satisfies the inequality
\begin{equation}\label{VolCom5}
e^{-d^2/4ct} \leq e^{-\gamma \, d} \, e^{c\, \gamma^2 t}.
\end{equation}

Using \eref{VolCom4}, \eref{VolCom3} and \eref{VolCom5} we can estimate the right side of \eref{heatg} to get
\begin{equation*}
\begin{split}
\left| \vec{H}(x,y,t) \right| &\leq \;  C \, \mu(x)^2\, \sup\{t^{-n/2},1\} \; \\
&\cdot \, \exp\left[  \left(\bar C\,( \sqrt{K_0} + 1)  - \gamma \right) d(x,y)  + C_5 \sqrt{K_0 \,t} +K_1\, t  + \frac 14 \gamma^2 C_4 \,t \,\right].
\end{split}
\end{equation*}
Choosing $\gamma$ such that $\beta= - \bar C\, (\sqrt{K_0} +1) + \gamma$ is any positive number, we get the desired estimate with $\alpha=-1 -C_5 \sqrt{K_0} - K_1 - \frac 14 \gamma^2 C_4 .$
\end{proof}

We are now ready to state and prove the $L^p$-boundedness of the resolvent operator. The proof is a standard analytical argument which we include for completion.
\begin{prop}   \label{Prop1}
Consider a complete Riemannian manifold $M^n$ and $D$   a Dirac operator on a Clifford bundle over $M$ which satisfies \eref{eq:Wei}. Suppose that the Clifford contraction and Ricci curvature are bounded below such that $\mathcal{R} \geq - K_1$  and $\rm{Ric}\geq -K_0$, for some $K_1, K_0\geq 0$.  Assume moreover that the volume of $M$ grows uniformly subexponentially.

Let $K$ be a compact subset of $\rho(D^2_2)$. Then for any $\xi\in K$,  and for any $m > n+2$,  $(D^2_2 -\xi)^{-m}$ has a smooth kernel function $\vec{G}_\xi(x,y)$ satisfying
\begin{equation} \label{res1}
|\vec{G}_\xi(x,y)|\leq C\mu(x)\mu(y) e^{-\eps d(x,y)}
\end{equation}
for some $\eps>0$ and $C>1$.

In addition, the operator $(D^2_2-\xi)^{-m}|_{L^2\cap L^p}$ extends to a bounded operator from $L^p$ to $L^p$ for all $1\leq p \leq \infty$.
\end{prop}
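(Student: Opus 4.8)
The plan is to derive the representation and estimate for the kernel of $(D^2_2-\xi)^{-m}$ from the Gaussian-type upper bound of Lemma~\ref{Lem1} via the standard formula
\[
(D^2_2-\xi)^{-m} = \frac{1}{(m-1)!}\int_0^\infty t^{m-1}\,e^{\xi t}\,e^{-tD^2_2}\,dt,
\]
valid on $L^2$ for $\xi$ in the resolvent set with $\mathrm{Re}\,\xi$ sufficiently negative, and then extended to all of the compact set $K$ by analyticity. First I would record that $e^{-tD^2_2}$ has the integral kernel $\vec H(x,y,t)$ from Proposition~\ref{Tb1}; integrating the above identity against $\vec H$ produces a candidate kernel $\vec G_\xi(x,y)=\frac{1}{(m-1)!}\int_0^\infty t^{m-1}e^{\xi t}\vec H(x,y,t)\,dt$. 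The convergence of this integral, and the bound \eqref{res1}, will follow by plugging in \eqref{heat42}: the factor $\sup\{t^{-n/2},1\}$ is integrable against $t^{m-1}$ near $t=0$ precisely because $m-1-n/2>0$ (here $m>n+2$ is more than enough), while the factor $e^{-(\alpha+1)t}$ with $\alpha<0$, combined with $e^{\xi t}$ for $\mathrm{Re}\,\xi$ large negative, gives exponential decay at $t=\infty$; the $x,y$-dependence $\mu(x)^2 e^{-\beta d(x,y)}$ survives the $t$-integration, and one converts $\mu(x)^2$ into $\mu(x)\mu(y)$ at the cost of weakening $\beta$ using \eqref{VolCom3}, exactly as in the proof of Lemma~\ref{Lem1}. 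Smoothness of $\vec G_\xi$ follows from elliptic regularity (the kernel of a resolvent power of an elliptic operator, off and on the diagonal since $m$ is large), or by differentiating under the integral sign using interior parabolic estimates for $\vec H$.

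Next I would upgrade from a single $\xi$ with very negative real part to all of $K$. Since $K\subset\rho(D^2_2)$ is compact, the resolvent $\xi\mapsto (D^2_2-\xi)^{-1}$ is holomorphic and uniformly bounded on a neighborhood of $K$, hence so is $(D^2_2-\xi)^{-m}$; the kernel $\vec G_\xi$ depends holomorphically on $\xi$, and the estimate \eqref{res1} for general $\xi\in K$ follows either by the same integral representation along a suitable ray (choosing the ray so that $t\mapsto e^{\mathrm{Re}(\xi)t}$ still decays fast enough relative to $e^{-(\alpha+1)t}$ — possible because $\alpha$ in Lemma~\ref{Lem1} can be made as negative as we like by taking $\beta$ large, while $K$ is bounded) or by a resolvent-identity argument absorbing the bounded operator $(D^2_2-\xi_0)(D^2_2-\xi)^{-1}$. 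The constants $\eps>0$ and $C>1$ are then uniform over $K$.

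Finally, the $L^p\to L^p$ boundedness is a Schur test. From \eqref{res1} and Lemma~\ref{VolCom2} (which applies since the volume grows uniformly subexponentially) we get
\[
\sup_{x\in M}\int_M |\vec G_\xi(x,y)|\,dv(y) \leq C\sup_{x\in M}\int_M \mu(x)\mu(y)\,e^{-\eps d(x,y)}\,dv(y) < \infty,
\]
and by symmetry of the roles of $x$ and $y$ in the bound, $\sup_y\int_M|\vec G_\xi(x,y)|\,dv(x)<\infty$ as well. The Schur test then gives boundedness on $L^1$ and $L^\infty$, and Riesz–Thoren interpolation yields boundedness on $L^p$ for all $1\le p\le\infty$, with operator norm bounded uniformly in $\xi\in K$. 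I expect the main obstacle to be the bookkeeping in the first step: making sure the integral representation of $(D^2_2-\xi)^{-m}$ is justified on $L^2$, that it genuinely has $\vec G_\xi$ as its kernel (Fubini, using that $\vec H(\cdot,\cdot,t)$ is jointly integrable against the relevant weights), and that the passage from the $\mu(x)^2$ bound in \eqref{heat42} to the symmetric $\mu(x)\mu(y)$ bound in \eqref{res1} is done with a $\beta$ large enough that all the $t$- and $y$-integrals converge simultaneously — this is routine but is where all the hypotheses (curvature bounds, subexponential volume growth, $m>n+2$) get used at once.
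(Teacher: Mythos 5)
Your integral representation
\[
(D^2_2-\xi)^{-m} = \frac{1}{(m-1)!}\int_0^\infty t^{m-1}\,e^{\xi t}\,e^{-tD^2_2}\,dt
\]
together with the bound \eqref{heat42} does produce a kernel estimate of the form \eqref{res1} --- but only for $\xi$ with $\mathrm{Re}\,\xi$ sufficiently negative, since \eqref{heat42} carries the exponentially \emph{growing} factor $e^{-(\alpha+1)t}$ ($\alpha<0$), so convergence of the $t$-integral requires $\mathrm{Re}\,\xi < \alpha+1<0$. The compact set $K\subset\rho(D^2_2)$, however, can sit arbitrarily close to the spectrum $[0,\infty)$ with large positive real part. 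Your proposed escape routes do not close this gap: holomorphy of $\xi\mapsto(D^2_2-\xi)^{-m}$ as an operator-valued map gives no pointwise control on the kernel, so ``extend by analyticity'' is not a valid step for an estimate like \eqref{res1}; deforming the ray of integration would require Gaussian-type kernel bounds for the analytically-continued semigroup $e^{-zD^2_2}$ in a sector, which is not what Lemma~\ref{Lem1} gives; and a naive resolvent-identity composition $(D^2_2-\xi_0)^{-m}\circ\bigl((D^2_2-\xi_0)(D^2_2-\xi)^{-1}\bigr)^m$ puts an operator that you only control on $L^2$ on the \emph{outside}, destroying the $L^1\to L^\infty$ structure you need for a kernel.

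The paper handles exactly this point by a different decomposition. It conjugates by weights $e^\psi$, $\psi\in\Psi_\eps$, and $\mu^{-1}$, and writes (via Kato's powers-of-the-resolvent identity) $\mu^{-1}e^\psi(D^2_2-\xi)^{-m}e^{-\psi}\mu^{-1}$ as a finite sum of three-factor products
\[
\bigl[\mu^{-1}e^\psi(D^2_2-\alpha)^{-m/2}e^{-\psi}\bigr]\cdot\bigl[e^\psi(D^2_2-\xi)^{-1}e^{-\psi}\bigr]^{j}\cdot\bigl[e^\psi(D^2_2-\alpha)^{-m/2}e^{-\psi}\mu^{-1}\bigr],
\]
with $\alpha$ a \emph{fixed negative real number}. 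The outer factors admit the heat-semigroup integral representation you wrote (now applied at $z=\alpha$, where it converges), giving $L^2\to L^\infty$ and, by duality, $L^1\to L^2$ bounds uniformly over $\psi\in\Psi_\eps$. The middle factor is controlled by Lemma~\ref{lemkato}, which is precisely the device that makes the argument work for every $\xi$ in the compact set $K$ --- it gives a uniform $L^2\to L^2$ bound for the conjugated resolvent $e^\psi(D^2_2-\xi)^{-1}e^{-\psi}$ over $\xi\in K$ and $\psi\in\Psi_\eps$. Composing yields the $L^1\to L^\infty$ bound for the conjugated $m$-th power, hence the kernel existence and the estimate \eqref{res1} with the $e^{-\eps d(x,y)}$ decay coming from optimizing over $\psi\in\Psi_\eps$. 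Your proof omits both the weight-conjugation mechanism and Lemma~\ref{lemkato}, which are the load-bearing parts of the argument; without them there is no route from ``$\xi$ very negative'' to ``$\xi\in K$.'' The final $L^p$-boundedness step via Lemma~\ref{VolCom2} and interpolation is fine and matches the paper.
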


Dunford and Pettis proved that an operator on functions which is bounded from $L^1$ to $L^\infty$  has an integral kernel. In \cite{Char1}*{Lem.~11} the first author showed that an integral kernel exists for an operator on the $k$-form bundle $\Lambda^k(M)$ which is bounded from $L^1$ to $L^\infty$. By following the same argument, but for a Clifford bundle, we can also show that an operator has an integral kernel whenever it is bounded from $L^1$ to $L^\infty$:

\begin{proof}
 In order to prove \eqref{res1} it suffices to show that the perturbed resolvent operator $\mu^{-1} \, e^{\psi} (D_2^2 -\xi)^{-m} e^{-\psi} \mu^{-1}$ is bounded from $L^1$ to $L^\infty$ for all $\psi\in \Psi_\eps$ for some $\eps>0$. To this end,  we use powers of the resolvent equation \cite{Kato}*{5.5 on p.~36} to get
\begin{equation} \label{e3_1}
\begin{split}
\mu^{-1} \, e^{\psi} (  D^2_2 -\xi)^{-m} e^{-\psi} \mu^{-1}
&=
\sum_{j=0}^m \left(
\begin{array}{c}
m \\
j \\
\end{array}
\right)
(\xi-\alpha)^j[\mu^{-1} \, e^{\psi} (D^2_2 -\alpha)^{-m/2} e^{-\psi}]\\
& \qquad \cdot [e^{\psi} (D^2_2 -\xi)^{-1} e^{-\psi}]^{j} \; [ e^{\psi} (D^2_2 -\alpha)^{-m/2} e^{-\psi}\mu^{-1}]
\end{split}
\end{equation}
where $\alpha$ is a negative constant. The resolvents on the right side of the equations commute. But this splitting is convenient in order to show that this actually is a bounded operator from $L^1$ to $L^\infty$, see below.\medskip

On the right side, we have in essence three operators. First note that by Lemma \ref{lemkato}, the middle operator is bounded on $L^2$. In other words there exists an $\eps,C >0$, depending on $\xi$, such that for all $\psi\in \Psi_\eps$
\begin{equation}\label{e3b}
\|e^{\psi} (D^2_2 -\xi)^{-1} e^{-\psi}\|_{2 \to 2} \leq C.
\end{equation}

Since $\alpha<0$, the operator $(D^2_2 -\alpha)^{-m/2}$ satisfies the identity (see \cite{EN}*{p.57})
\[
(D^2_2 -\alpha)^{-m/2} = C(m) \int_0^\infty e^{-t \, D^2_2} \, t^{\frac{m}{2}-1} \, e^{\alpha t} \,dt.
\]
By Proposition~\ref{Tb1} the heat operator has a kernel. Hence, the integrability of the right hand side of the last equation guarantees the existence of a kernel for the resolvent operator $(D^2_2 -\alpha)^{-m/2}$ as well. We denote this kernel by $\vec{G}^{m/2}_\alpha(x,y)$. By Lemma \ref{Lem1} we know that right side is integrable whenever $m>n+2$  and for $\alpha$ sufficiently negative.
More specifically, for any  $\beta>0$ and $m>n+2$ we can find an $\alpha$ sufficiently negative such that
\[
|\vec{G}^{m/2}_\alpha(x,y)| \leq C(n, K_0,\beta)\,\mu(x)^2 e^{-\beta\, d(x,y)}.
\]

For any  $\omega\in L^2$, if we use the estimate \eref{VolCom3}, Lemma \ref{VolCom2} and choose $\beta>0$ large enough, we can see that
\begin{equation*}
\begin{split}
\| \mu^{-1} \, e^{\psi} ( D_2^2 -\alpha)^{-m/2} &e^{-\psi} \omega \|_{L^\infty}
= \sup_{y} \mu(y)^{-1} \, e^{\psi(y)} \bigl| \int_M \vec{G}^{m/2}_\alpha(x,y) e^{-\psi(x)} \, \omega(x) \, dv(x) \bigr|\\
&\leq C \sup_{y} \bigl| \int_M \mu(y)^{ -2} \,  \mu(x)^2  e^{2(\eps-\beta)d(x,y)}  \, dv(x) \bigr|^{1/2}\, \|\omega(x)\|_{L^2} \\
&\leq C \|\omega(x)\|_{L^2}.
\end{split}
\end{equation*}

In other words,
\[
\|\mu^{-1} \, e^{\psi} (D^2_2 -\alpha)^{-m/2} e^{-\psi}\|_{2 \to \infty} \leq C.
\]
By taking adjoints, we   get that the third operator in the right side of \eref{e3_1} is bounded from $L^1$ to $L^2$,
\[
\|e^{\psi} (D^2_2 -\alpha)^{-m/2} e^{-\psi}\mu^{-1}\|_{1 \to 2} \leq C.
\]

Combing the three estimates for the operators on the right side of \eref{e3_1}, we get that  $\mu^{-1} \, e^{\psi} (  D^2_2 -\xi)^{-m} e^{-\psi} \mu^{-1}$ is bounded from $L^1$ to $L^\infty$. Hence it has an integral kernel. The pointwise bound on $ \vec{G}^{m}_\xi(x,y)$  follows. \medskip

Lemma \ref{VolCom2} now implies that  $(D^2_2 -\xi)^{-m}$ is a bounded operator on $L^1$. By the definition of $\xi$ it is also bounded  on  $L^2$. Using a standard  interpolation argument, it follows that it is bounded on $L^p$ for any $1\leq p\leq 2$. Taking adjoints we get that it is bounded for any  $1\leq p\leq \infty$.
\end{proof}

We are now ready to prove the $L^p$-independence  for $\sigma(D_p^2)$:

\begin{proof}[Proof of the $p$-independence of $\sigma(D^2_p)$ in  Theorem \ref{thmD2Lp}]

We will first prove the inclusion $\sigma(D^2_p)\subset \sigma(D^2_2)$ which is the most difficult one to demonstrate. \medskip

By Proposition  \ref{Prop1}, with $ m= n+3$, the operator $(D^2_2 -\xi)^{-m}$  is  bounded on $L^p$ for all $p\in[1,\infty]$ and for all $\xi \in \rho(D^2_2)$. Given that $\sigma(D^2_2)$ is contained in the nonnegative real line, any $\xi \in \rho(D^2_2)$ can be path connected to a negative number. By uniqueness of extension we conclude that $(D^2_p -\xi)^{-m}= (D^2_2 -\xi)^{-m}$ for all $\xi \in \rho(D^2_2)$ (see also \cites{HemVo,HemVo2,sturm,Char1,ChLu1} for further details). \medskip

Recall that the spectrum $D^2_p$ is closed for all $p$. If $\sigma(D^2_p)$ is not contained in $\sigma(D^2_2)$, then we can find a point $\xi_o$ on the boundary of $\sigma(D^2_p)$ which does not belong to $\sigma(D^2_2)$. Moreover, for any $\eps> 0 $, we can find $m-1$ distinct points $\xi_j \in \rho(D^2_p)$ for $1\leq j \leq m-1$  with $|\xi_j-\xi_0|<\epsilon$. Using the abstract resolvent equation \cite{Kato}*{Equation (5.6) p. 37}, if the right-hand side of
\begin{equation*}
\begin{split}
&\left((D^2_p-\xi_o)(D^2_p-\xi_1)\cdots (D^2_p-\xi_{m-1})\right)^{-1}\\
& \quad =\sum_{l_1,\cdots, l_{m-1}= 0}^{\infty} (D^2_p-\xi_o)^{-m-l_1-\cdots-l_{m-1}}(\xi_1-\xi_o)^{l_1}\cdots(\xi_{m-1}-\xi_o)^{l_{m-1}}.
\end{split}
\end{equation*}
is bounded on $L^p$, then the operator $(D^2_p-\xi_o)(D^2_p-\xi_1)\cdots (D^2_p-\xi_{m-1})$ has a well-defined inverse which is a bounded operator in $L^p$.\medskip

Recall that $(D^2_2-\xi_o)^{-m}$ is bounded on $L^p$ by Proposition \ref{Prop1}. Since $\|(D^2_p-\xi_o)^{-m}\|_{p \to p} = \|(D^2_2-\xi_o)^{-m}\|_{p \to p}=C$, then
\[
\|(D^2_p-\xi_o)^{-l}\|_{p \to p} \leq C^{l/m}
\]
for any $l\geq m$. Choosing $\eps$ small enough, the resolvent expansion above gives us that $\left((D^2_p-\xi_o)(D^2_p-\xi_1)\cdots (D^2_p-\xi_{m-1})\right)^{-1}$ is bounded on $L^p$. \medskip

At the same time, by \cite{Kato}*{(5.5) on p.~36}
\[
\left((D^2_p-\xi_o)(D^2_p-\xi_1)\cdots (D^2_p-\xi_{m-1})\right)^{-1}
  =\alpha_o(D^2_p-\xi_o)^{-1}+\sum_{j=1}^{m-1} \alpha_j(D^2_p-\xi_j)^{-1}
\]
where $\alpha_o \ldots \alpha_{m-1}$ are constants (depending on $\xi_j$).  As a result, $(D^2_p-\xi_o)^{-1}$ is bounded on $L^p$, since every other operator in the above equation is bounded on $L^p$. In other words, $\xi_o$ is in the resolvent set, which is a contradiction. \medskip

The other containment, $\sigma(D^2_p)\supset \sigma(D^2_2)$, holds in a very general setting and only requires an interpolation argument. The idea is to consider $D^2_p$ and $D^2_{p^*}$ where $p, p^*$ are dual such that $\frac 1p + \frac{1}{p^*}=1$. Then  $\rho(D^2_p)=\rho(D^2_{p^*})$ and the resolvent operators $(D^2_p-\xi)^{-1}$, $(D^2_{p^*}-\xi)^{-1}$ are  bounded  on $L^p$ and $L^{p^*}$ respectively for all $\xi\in \rho(D^2_p)$. Using the  Calder\'on Lions Interpolation Theorem  \cite{ReSiII}*{Theorem IX.20} and the argument by Hempel and Voigt  \cites{HemVo2}, we get that the interpolated bounded operator on $L^2$ is in fact $(D^2_2-\xi)^{-1}$. Therefore $\xi\in \rho(D^2_2)$.\medskip

The proof of the $L^p$-independence of isolated eigenvalues of finite multiplicity follows similarly as in \cite{HemVo}*{Proposition 3.1} (see also \cites{Aut,Char1}). The main idea is to consider the Laurent series expansion for the resolvent operator $(D^2_p-\xi)^{-1}$ on a small disc around the eigenvalue $\lambda$  of finite multiplicity, which contains no other points in the spectrum other than $\lambda$. The order of the pole at $\lambda$ corresponds to the multiplicity of the eigenvalue. The $L^p$-independence of the spectrum allows us to show that the Laurent series expansions are the same for all $p$, and in consequence the order of the pole is also the same.
\end{proof}

\section{The \texorpdfstring{$L^p$}{Lp} spectrum of \texorpdfstring{$D_p$}{D}}\label{sec5}

Up to now we examined when the $L^p$-spectrum of $D_p^2$ is independent on $p$. Of course, it is interesting to know whether the $L^p$-spectrum of $D_p$ itself is also independent of $p$.\medskip

First by the Riesz-Thorin interpolation theorem $\sigma (D_2)\subset \sigma (D_q)\subset \sigma (D_p)$ for all $\infty> p\geq q\geq 2$ or $1\leq p\leq q\leq 2$. Moreover, we note that the proof of Lemma~B.8  in \cite{AG16} stays exactly the same for Dirac operators on arbitrary Clifford bundles over complete Riemannian manifolds. Hence, we know that if the  spectrum $\sigma (D_p)$ is not the entire complex plane, then $\lambda^2 \in \sigma (D^2_p)$ if and only if  $\lambda$ or $-\lambda$ belongs to $ \sigma (D_p)$. This is the only statement where $\sigma (D_1)\neq \mathbb C$ is used. We think that for the manifolds considered in Theorem~\ref{thmD2Lp} this is actually automatically the case but up to now we were not able to prove this in full generality. We hope to address such questions in future work. If $(M,g)$ and the Clifford bundle are of bounded geometry, the assumption $\sigma(D_1)\neq \mathbb C$ is automatically fulfilled, see \cite{AG16}*{Rem.~B9} and cp. Lemma~\ref{app_b}.\medskip

\begin{proof}[Proof of Theorem~\ref{thmD2Lp} -- the last statement  concerning $\sigma(D_p)$]
As proved in the  last section $\sigma(D_p^2)$ is independent of $p$ -- in particular $\sigma (D_p^2)=\sigma (D_2^2)$. From the last paragraph above we obtain that whenever $\sigma(D_1)\neq \mathbb C$ then also $\sigma(D_p)\neq \mathbb C$ for $p<\infty$ and hence $\lambda^2 \in \sigma (D^2_p)$ if and only if  $\lambda$ or $-\lambda$ belongs to $ \sigma (D_p)$. Now consider a point $\lambda\in \sigma (D_p)$. Then, $\lambda^2\in \sigma (D_p^2)=\sigma (D_2^2)$. Since  $\sigma(D_2)$ is assumed to be symmetric, $\lambda\in \sigma (D_2)$. Hence, $\sigma (D_p)\subset \sigma (D_2)$. Together with the converse inclusion, we conclude that $\sigma (D_p)$ is independent of $p$.
\end{proof}

\begin{proof}[Proof of Corollary~\ref{corDLp}] For the special case of the classical Dirac operator  \cite{AG16}*{Lemmas~B.10 and~B.11} gives that for $\text{dim}\, M\neq 3$ modulo $4$, or if $M$ admits an orientation reversing isometry that lifts to the spin structure, the $L^p$-spectrum of $\Dc_p$ is always symmetric. Hence, we can use the last theorem which completes the proof of Corollary~\ref{corDLp}.
\end{proof}

\section{Computation of the spectrum of \texorpdfstring{$D_p$}{Dp} and \texorpdfstring{$D_p^2$}{Dp2} via computation of \texorpdfstring{$L^1$}{L1} spectrum.} \label{S6}

In this section, we give two examples (one very concrete and a more abstract one) where we prove that the $L^p$-spectrum of the square Dirac operator is the nonnegative real line for all $p\in[1,\infty)$. We achieve this by using the $p$-independence result of Theorem \ref{thmD2Lp}, and showing that the $L^1$- spectrum contains the nonnegative real line. This implies that $\sigma(D^2_2)$ is maximal in these cases. Using the same theorem, we can also find large classes of manifolds where $\sigma(D_2)$ is maximal (in other words equal to $\mathbb{R}$) and where $\sigma(D_p)=\mathbb{R}$ for all $p\in[1,\infty)$.\medskip

Our first example treats the classical Dirac operator over manifolds with ends in a special warped product form. The second example considers the classical Dirac operator over manifolds that admit a bounded, asymptotically harmonic spinor. In both cases the geometric conditions on the structure of the manifold are such that the $L^p$ spectrum of the square Dirac operator is $p$-independent.

\subsection{Some manifolds with warped product ends}
In this section we want to apply our results to  a class of manifolds with warped product ends.

\begin{thm} Let $(\tilde{M}, \tilde{g})$ be a complete spin Riemannian manifold of dimension $n$ which has uniformly subexponential volume growth such that there exists an isometric embedding $(M=N\times [0,\infty),g=f(t)^2g_N+dt^2)\hookrightarrow (\tilde{M}, \tilde{g})$ with $\text{ker}\,  \Dc^{N}\neq \varnothing$. Assume that  $f$ is positive, monotonically increasing with
\begin{equation} \label{grow1}
\lim_{t\to \infty} \tfrac{f^{n-1}(2t)}{tf^{n-1}(t)}\to 0.
\end{equation}

Then, the $L^p$-spectrum of the square of the  classical Dirac operator on $(\tilde{M}, \tilde{g})$ is $[0,\infty)$.

If additionally $\mathrm{dim}\, \tilde{M}\neq 3\ \text{mod}\ 4$ or $M$ admits an orientation reversing isometry that lifts to the spin structure, then, the $L^2$-spectrum of the  classical Dirac operator on $(\tilde{M}, \tilde{g})$ is $\mathbb R$, it is in other words maximal. If we also know that $\sigma(\Dc_1)\neq \mathbb{C},$ then  $\sigma(\Dc_p)=\mathbb{R}$ for all $p\in[1,\infty)$.
\end{thm}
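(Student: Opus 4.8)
The plan is to apply Theorem~\ref{thmD2Lp} (and Corollary~\ref{corDLp}) to $(\tilde M,\tilde g)$, so that everything reduces to computing one spectrum in a single convenient $L^p$. For the $L^p$-independence hypotheses we need the Ricci curvature bounded below and the Clifford contraction (here $\mathrm{scal}/4$) bounded below; since $(\tilde M,\tilde g)$ is given with uniformly subexponential volume growth, it remains to observe that the warped-product geometry of the end, together with a compactness/smoothness argument on $\tilde M\setminus M$ and the explicit curvature formulas for $g=f(t)^2 g_N+dt^2$ with $f$ positive and monotonically increasing, forces $\mathrm{Ric}$ and $\mathrm{scal}$ to be bounded below. (Monotone increasing $f$ with the growth condition \eqref{grow1} keeps the radial curvatures $-f''/f$ and mixed curvatures $-(f')^2/f^2$ controlled; one may need to assume or note that the relevant derivatives of $f$ are bounded, which is implicit in the intended class of examples.) Granting this, Theorem~\ref{thmD2Lp} gives $\sigma(\Dc_p^2)=\sigma(\Dc_2^2)$ for all $p\in[1,\infty]$, so it suffices to identify this set as $[0,\infty)$.

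Next I would compute the $L^1$-spectrum of $\Dc_1^2$, which is the easy direction because $L^1$-estimates for approximate eigenspinors are cheap. The strategy is the classical one (Wang, B\"ar): use $0\neq \xi\in\ker\Dc^N$ to build, on the end $M=N\times[0,\infty)$, a family of test spinors of product type, roughly $\varphi_{\lambda,R}(x,t)=\chi_R(t)\,e^{i\lambda t}\,f(t)^{-(n-1)/2}\,\widehat\xi(x)$, where $\widehat\xi$ is $\xi$ transported up the cone (the factor $f^{-(n-1)/2}$ diagonalizes the warped Dirac operator along the $N$-directions and makes the $N$-part contribute nothing because $\Dc^N\xi=0$), and $\chi_R$ is a cutoff in $t$ supported on $[R,2R]$ (say). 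A direct computation shows $(\Dc^2-\lambda^2)\varphi_{\lambda,R}$ consists of terms supported where $\chi_R'$ or $\chi_R''$ live; estimating $\|(\Dc^2-\lambda^2)\varphi_{\lambda,R}\|_{L^1}/\|\varphi_{\lambda,R}\|_{L^1}$ and using that the $L^1$-mass of a slab $[R,2R]$ relative to $[2R,4R]$, after weighting by $f^{-(n-1)/2}$ and the volume density $f^{n-1}$, is governed precisely by the ratio $f^{n-1}(2t)/(t f^{n-1}(t))$, condition \eqref{grow1} forces this Rayleigh-type quotient to $0$ as $R\to\infty$ (choosing cutoffs of width $\sim R$). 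Hence every $\lambda^2\in[0,\infty)$ lies in the approximate point spectrum of $\Dc_1^2$, so $[0,\infty)\subseteq\sigma(\Dc_1^2)$. The reverse inclusion $\sigma(\Dc_1^2)\subseteq[0,\infty)$ follows from $\sigma(\Dc_1^2)=\sigma(\Dc_2^2)$ and the non-negativity of the self-adjoint operator $\Dc_2^2$ on $L^2$. Combining, $\sigma(\Dc_p^2)=[0,\infty)$ for all $p\in[1,\infty]$, which is the first assertion.

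For the second assertion I would invoke Corollary~\ref{corDLp}: under $\mathrm{dim}\,\tilde M\neq 3\bmod 4$ (or the existence of an orientation-reversing isometry lifting to the spin structure — here I would just quote the corollary's hypothesis as stated) the spectrum $\sigma(\Dc_2)$ is symmetric, and $\sigma(\Dc_p^2)\neq\mathbb C$ trivially since it equals $[0,\infty)$. By the relation recalled in Section~\ref{sec5} (the analogue of Lemma~B.8 of \cite{AG16}), $\lambda^2\in\sigma(\Dc_2^2)$ iff $\lambda\in\sigma(\Dc_2)$ or $-\lambda\in\sigma(\Dc_2)$; since $\sigma(\Dc_2^2)=[0,\infty)$ this puts every real $\lambda$, up to sign, in $\sigma(\Dc_2)$, and symmetry of $\sigma(\Dc_2)$ then yields $\sigma(\Dc_2)=\mathbb R$. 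If moreover $\sigma(\Dc_1)\neq\mathbb C$, then Theorem~\ref{thmD2Lp} (last statement) applies — $\sigma(\Dc_2)$ is symmetric and $\sigma(\Dc_1)\neq\mathbb C$ — giving $\sigma(\Dc_p)=\sigma(\Dc_2)=\mathbb R$ for all $p\in[1,\infty)$.

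The main obstacle I anticipate is not the spectral bookkeeping but the analytic heart of the $L^1$ computation: one must set up the warped-product Dirac operator on $N\times[0,\infty)$ correctly (identification of spinor bundles under the warping, the conformal-type weight $f^{-(n-1)/2}$, the $\gamma(\partial_t)$ term coupling to $\Dc^N$), verify that $\Dc^N\xi=0$ really kills the $N$-contribution for the ansatz, and then bound the error terms in $L^1$ sharply enough that \eqref{grow1} — rather than a stronger hypothesis — suffices; the delicate point is choosing the cutoff scale (width comparable to $R$) so that the derivative-of-cutoff terms are controlled by exactly the ratio appearing in \eqref{grow1} after accounting for the warped volume element $f^{n-1}\,dt\,dv_N$. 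A secondary, more routine obstacle is pinning down which boundedness conditions on $f$ and its derivatives are needed for "$\mathrm{Ric}$ and $\mathrm{scal}$ bounded below" on all of $\tilde M$ so that Theorem~\ref{thmD2Lp} genuinely applies; monotonicity of $f$ handles the sign of several curvature terms but one should state cleanly what is assumed about $f',f''$.
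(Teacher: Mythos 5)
Your overall strategy matches the paper's: reduce to the $L^1$-spectrum via Theorem~\ref{thmD2Lp}, build approximate eigenspinors on the warped end from a kernel element of $\Dc^N$ with an oscillating $e^{i\lambda t}$ factor and a radial cutoff, and conclude $\sigma(\Dc_p^2)=[0,\infty)$ by self-adjointness and then pass to $\sigma(\Dc_p)$ via symmetry and Corollary~\ref{corDLp}. But your test spinor differs in two genuine ways. First, you insert the conformal weight $f^{-(n-1)/2}$ to cancel the warping term $\tfrac{n-1}{2}\tfrac{f'}{f}$. The paper instead works with the \emph{unweighted} spinor $\psi_T=\eta_T e^{-i\lambda t}\phi$, so the residual error $\bigl(\eta_T'+\tfrac{n-1}{2}\tfrac{f'}{f}\eta_T\bigr)$ keeps the warping term; the key observation you miss is that after multiplying by the volume density $f^{n-1}$ one has $\tfrac{n-1}{2}\tfrac{f'}{f}f^{n-1}=\tfrac12 (f^{n-1})'$, so the $L^1$-integral telescopes to $\tfrac12\bigl(f^{n-1}(4T)-f^{n-1}(T)\bigr)$ by the fundamental theorem of calculus, and monotonicity bounds the denominator by $T f^{n-1}(2T)$. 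This is why no bound on $f'$ or $f''$ is ever needed and why the hypothesis reads exactly as \eqref{grow1}. Your weighted approach also works — you'd need $\tfrac{f^{(n-1)/2}(2t)}{t f^{(n-1)/2}(t)}\to 0$, which is in fact implied by \eqref{grow1} since if $h(t)=f^{n-1}(2t)/f^{n-1}(t)$ then $h(t)=o(t)$ gives $\sqrt{h(t)}=o(\sqrt t)=o(t)$ — but this is an extra (if easy) lemma you'd have to prove, and the paper's FTC computation is cleaner. Second, you target $\Dc^2-\lambda^2$ directly, whereas the paper shows $\mathbb R\subset\sigma(\Dc_1^M)$ and then invokes the passage to $\sigma(\Dc_1^2)$, a step that implicitly relies on the spectral-mapping relation $\lambda\in\sigma(D_p)\Rightarrow\lambda^2\in\sigma(D_p^2)$; your direct construction for $\Dc^2$ sidesteps this and is arguably more self-contained. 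Finally, your flag about missing curvature hypotheses is well taken: the theorem statement lists only uniformly subexponential volume growth, but the proof appeals to Theorem~\ref{thmD2Lp}, which requires $\mathrm{Ric}$ and the Clifford contraction bounded below — and these do \emph{not} follow from \eqref{grow1} or monotonicity of $f$ without further assumptions. Note also that you should double-check the second part of the statement: the paper's theorem (copied in the statement) says \emph{orientation preserving} isometry, but Corollary~\ref{corDLp} and \cite{AG16}*{Lem.~B.10, B.11} require orientation \emph{reversing}; you correctly wrote "reversing" in your proposal, which appears to fix a typo in the theorem statement itself.
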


Admissible functions $f$ include some functions with quasi-polynomial growth such as   $f^{n-1}(t)=e^{\frac{1}{4}(\ln t)^2}$. Also functions with polynomial growth are admissible in the above theorem. Note that for polynomial growth functions one could directly compute the $L^2$-spectrum, but not for the general class of $f$ that satisfies \eqref{grow1}.

\begin{proof}

By \cite{BGM}*{(3.6)} the Dirac operator on the end $M$ is given by
\[\Dc^M=\nu\cdot \left( \tilde{\Dc}^{N} +\frac{n-1}{2} \frac{f'(t)}{f(t)} +\nabla^M_{\nu} \right),\]
 where $\widetilde{\Dc}^{N}=\Dc^N$ when the dimension of $N$ is even, $\tilde{\Dc}^{N}=\text{diag}(\Dc^N, -\Dc^N)$ otherwise, and  $\nu\define \partial_t$.  Let $\phi\in \mathrm{ker}\, \tilde{\Dc}^{N}$ be such that $\nu\cdot \phi=\i \phi$. This is possible since $\nu^2=-\text{Id}$ and $\tilde{\Dc}^N(\nu \cdot .)=-\nu\cdot \tilde{\Dc}^N (.)$. \medskip

 Let $\eta_T\colon \mathbb R\to [0,1]$ be a smooth cut-off function with $\text{supp}\, \eta_T=[T,4T]$, $\eta_T|_{[2T,3T]}=1$ and $|\eta_T'|\leq 2/T$.
We define spinors $\psi_T$ with support in $M$ by setting  $\psi_T(x,t)= \eta_T(t) \text{exp}(- \lambda\i t)\phi(x,t)$ for some $\lambda\in \mathbb R$ and all $(x,t)\in M=N\times [0,\infty)$.
 Then
 \begin{align*}
  (\Dc^M-\lambda)\psi_T=& \left(\frac{n-1}{2}\frac{f'}{f} \eta_T + (\eta_T' -\lambda\i \eta_T)  \right)\text{exp}(- \lambda\i t)  \nu\cdot \phi - \lambda \eta_T \text{exp}(- \lambda\i t)  \phi\\
  =& \left(\frac{n-1}{2}\frac{f'}{f} \eta_T + \eta_T' \right)\text{exp}(- \lambda\i t)  \nu\cdot \phi
 \end{align*}
which implies
 \begin{align*}
&\frac{\Vert (\Dc^M -\lambda) \psi_T\Vert_{L^p}}{\Vert \psi_T\Vert_{L^p}} \leq
\frac{\Vert \frac{n-1}{2}\frac{f'}{f} \eta_T \phi\Vert_{L^p} + \Vert \eta_T' \phi\Vert_{L^p}}{\Vert \psi_T\Vert_{L^p}}\\
\leq &
\frac{\left( \int_{N\times [T, 4T]} \left( \frac{n-1}{2} \left|\frac{f'(t)}{f(t)}\right| \, |\phi|\right)^p f^{n-1}(t) \, dt \, dv_N \right)^{\frac{1}{p}} + \frac{2}{T} \left(\int_{N\times [T, 4T]} |\phi|^p \, f^{n-1}(t) dt \, dv_N \right)^{\frac{1}{p}}
}{\Vert \phi\Vert_{L^p(N)} \left(\int_{ [2T, 3T]} f^{n-1}(t) \, dt\right)^{\frac{1}{p}}}\\
\leq &
\frac{\left( \int_{ [T, 4T]} |\frac{n-1}{2}|^p \left|\frac{f'(t)}{f(t)}\right|^p f^{n-1}(t) \, dt \right)^{\frac{1}{p}}}{\left(\int_{ [2T, 3T]} f^{n-1}(t) \, dt\right)^{\frac{1}{p}}}
+
\frac{2 \left(\int_{[T, 4T]} f^{n-1}(t) \, dt \right)^{\frac{1}{p}}
}{T \left(\int_{ [2T, 3T]} f^{n-1}(t) \, dt\right)^{\frac{1}{p}}}.
\end{align*}
where we used that $\int_{N\times [a,b]} |\phi|^p F(t)\, dt \, dv_N = \Vert \phi\Vert_{L^p(N)}^p \int_a^b F(t)\, dt$. \medskip

For $p=1$ and using that $f$ is nondecreasing and positive this gives
 \begin{align*}
\frac{\Vert (\Dc^M -\lambda) \psi_T\Vert_{L^1}}{\Vert \psi_T\Vert_{L^1}} \leq
\frac{ f^{n-1}(4T)-f^{n-1}(T)}{2T f^{n-1}(2T)}
+
\frac{6f^{n-1}(4T)}{T f^{n-1}(2T)}\leq \frac{7f^{n-1}(4T)}{T f^{n-1}(2T)}.
\end{align*}
Then, \eqref{grow1} implies  that $\mathbb R\subset \sigma(\Dc_1^M)$ and hence, $ [0, \infty)\subset \sigma((\Dc_1^M)^2)$. Since the spectrum of $\Dc^2_{p}$ is $p$-independent for $1\leq p<\infty$ by Theorem~\ref{thmD2Lp}, we have $ [0, \infty)\subset \sigma(\Dc_2^2)$. Since $[0,\infty)$ is the maximal possible spectrum of the nonnegative self-adjoint operator $\Dc_2^2$, we have equality. \medskip

By Lemma B.10 and B.11 of \cite{AG16} if $n$ is not congruent $3$ modulo $4$ or $M$ admits an  isometry reversing  orientation that lifts to the spin structure, the $L^2$-spectrum of $\Dc$ is automatically symmetric, which implies $\sigma(\Dc_2)=\mathbb R$.  Moreover, if we also know that $\sigma(\Dc_1)\neq \mathbb{C},$ then  $\sigma(\Dc_p)=\mathbb{R}$ for all $p\in[1,\infty)$, by Corollary \ref{corDLp}.
\end{proof}

\subsection{Manifolds with asymptotically \texorpdfstring{$D^2$}{D2}-harmonic spinors}
As another application we consider manifolds with asymptotically $D^2$-harmonic spinors. This assumption is a relatively general one. In the following section we will consider the particular case of asymptotically flat manifolds which admit a Witten spinor.

\begin{thm} \label{thmWLp} Let $(M,g)$ be a complete Riemannian manifold with Clifford bundle $S$ and associated Dirac operator $D$.  Suppose that the Clifford contraction $\mathcal{R}$ in \eqref{eq:Wei} is bounded below and that the Ricci curvature of $M$ satisfies
\[
\text{Ric}(x) \geq -\delta(n) r^{-2}(x)
\]
where $r(x)=d(x_o,x)$ is the distance from $x$ to a fixed point $x_o$ and $\delta(n)$ is a small constant that depends only on the dimension of the manifold.

Moreover, assume that for all $R>0$  large enough there is a smooth section $\phi_R$ of $S$ restricted to  $M\setminus B_{x_o}(R)$ with $0<c_1<|\phi_R|< c_2$ on $M\setminus B_{x_o}(R)$, $|\nabla \phi_R|\leq C$, and $\|D^2\phi_R\|_{L^\infty}\to 0$ as $R\to \infty$. \medskip

Then, the $L^p$-spectrum of $D^2_{p}$ is $[0,\infty)$ for all $p\in[1,\infty]$.

If additionally, $\sigma (D_1)\neq \mathbb{C}$ and $\sigma(D_2)$ is symmetric, then  $\sigma(D_p)=\mathbb{R}$ for all $p\in[1,\infty)$.
\end{thm}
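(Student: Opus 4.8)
The goal is to show $\sigma(D_p^2)=[0,\infty)$ for all $p\in[1,\infty]$, and then to deduce the statement about $\sigma(D_p)$. By Theorem~\ref{thmD2Lp}, the spectrum of $D_p^2$ is $p$-independent (the hypotheses there are satisfied: $\mathcal R$ is bounded below by assumption, and $\mathrm{Ric}\geq -\delta(n)r^{-2}\geq -\delta(n)$ is bounded below, while subexponential volume growth follows from the $r^{-2}$ Ricci decay — this is where I would cite the standard fact that asymptotically nonnegative Ricci curvature forces at most polynomial, hence uniformly subexponential, volume growth, e.g. via the Bishop–Gromov-type comparison adapted to the $r^{-2}$ bound). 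Since $D_2^2$ is a nonnegative self-adjoint operator, $\sigma(D_2^2)\subseteq[0,\infty)$, and by $p$-independence the same holds for all $p$. So it suffices to produce, for each $\lambda\geq 0$, approximate eigenspinors showing $\lambda\in\sigma(D_1^2)$; then $p$-independence upgrades this to all $p$, and combined with the self-adjoint upper bound gives equality.

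**Constructing the test spinors.** For fixed $\lambda\geq 0$ I would build, out of the asymptotically $D^2$-harmonic sections $\phi_R$, a family $\psi_{R}$ of the form $\psi_R = \chi_R\, \phi_R$ where $\chi_R$ is a radial cutoff supported on an annulus $B_{x_o}(b_R)\setminus B_{x_o}(a_R)$ inside $M\setminus B_{x_o}(R)$, chosen so that $|\nabla\chi_R|$ and $|\Delta\chi_R|$ are small relative to the $L^1$-mass of $\psi_R$ — this uses that on a manifold with the given polynomial-type volume growth one can always find annuli over which the boundary/interior volume ratio is favorable (a logarithmic-cutoff / Calabi–Yau volume-growth argument). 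Then I expand, using the Weitzenböck formula $D^2=\nabla^*\nabla+\mathcal R$ and the product rule,
\[
D^2\psi_R = \chi_R\, D^2\phi_R + (\Delta\chi_R)\phi_R - 2\nabla_{\nabla\chi_R}\phi_R + \mathcal R(\dots)\text{-type cross terms},
\]
and estimate $\|(D^2-\lambda)\psi_R\|_{L^1}$. The term $\chi_R D^2\phi_R$ is controlled by $\|D^2\phi_R\|_{L^\infty}\to 0$ times the volume of the annulus; the cutoff-derivative terms are controlled using $|\phi_R|<c_2$, $|\nabla\phi_R|\leq C$, and the smallness of the cutoff gradients; and $\lambda\psi_R$ is just $\lambda$ times the mass. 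Dividing by $\|\psi_R\|_{L^1}\geq c_1\,\mathrm{vol}(\text{annulus where }\chi_R=1)$ and sending $R\to\infty$ gives $\|(D^2-\lambda)\psi_R\|_{L^1}/\|\psi_R\|_{L^1}\to 0$, so $\lambda\in\sigma(D_1^2)$. (Strictly, one should also handle $p=\infty$, but that follows by the duality remark $\sigma(D_\infty^2)=\sigma(D_1^2)$ noted in Section~\ref{sec5}.)

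**Passing to $D_p$.** Once $\sigma(D_p^2)=[0,\infty)$ for all $p$, the last statement follows exactly as in the proof of the final part of Theorem~\ref{thmD2Lp}: assuming $\sigma(D_1)\neq\mathbb C$, the relation from \cite{AG16}*{Lem.~B.8} gives $\lambda^2\in\sigma(D_p^2)$ iff $\lambda\in\sigma(D_p)$ or $-\lambda\in\sigma(D_p)$, so $\sigma(D_p)\subseteq\mathbb R$ and $\sigma(D_p^2)=\{\lambda^2:\lambda\in\sigma(D_p)\}$ forces $\sigma(D_p)$ to be a symmetric-closure of a subset of $\mathbb R$ whose squares fill $[0,\infty)$; symmetry of $\sigma(D_2)$ then pins $\sigma(D_p)\subseteq\sigma(D_2)=\mathbb R$, and since the reverse inclusion $\sigma(D_2)\subseteq\sigma(D_p)$ always holds by the Riesz–Thorin argument recalled at the start of Section~\ref{sec5}, we get $\sigma(D_p)=\mathbb R$ for all $p\in[1,\infty)$.

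**Main obstacle.** The delicate point is the annulus selection: one must choose radii $a_R<b_R$ (going to infinity with $R$) so that simultaneously (i) the cutoff $\chi_R$ has $\|\Delta\chi_R\|_{L^\infty}$ and $\|\nabla\chi_R\|_{L^\infty}$ small, and (ii) the volume of the "bad" collar where $\nabla\chi_R\neq 0$ is small compared to the volume of the "good" core where $\chi_R\equiv 1$ — all while the polynomial volume-growth bound is only an inequality, not an equality. This is handled by a standard logarithmic-cutoff trick (choosing $\chi_R$ to interpolate between $\ln r$ levels), but verifying it cleanly under the mere hypothesis $\mathrm{Ric}\geq-\delta(n)r^{-2}$ — and making sure the cross terms involving $\mathcal R$ and $\nabla\phi_R$ genuinely stay subordinate — is the part that requires care; I would isolate it as a lemma on the existence of good cutoff functions on manifolds with subexponential (indeed polynomial) volume growth.
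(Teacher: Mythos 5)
Your overall structure matches the paper's: reduce to the $L^1$-spectrum via $p$-independence (Theorem~\ref{thmD2Lp}), cite Wang for uniformly subexponential volume growth under the $r^{-2}$ Ricci decay, build test spinors as a scalar function times the asymptotically $D^2$-harmonic sections $\phi_R$, expand $(D^2-\lambda)(\cdot)$ via the product rule, and pass to $\sigma(D_p)$ at the end using the last part of Theorem~\ref{thmD2Lp}. The decomposition you write, $D^2(\eta\phi)=\eta D^2\phi-2\nabla_{\nabla\eta}\phi+(\Delta\eta)\phi$, is correct (there are no extra $\mathcal R$ cross terms, since $\mathcal R$ is a fiberwise endomorphism and commutes with multiplication by scalars), and the final passage to $\sigma(D_p)$ is fine.

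However, there is a genuine gap in the construction of the test spinors that makes your argument fail for every $\lambda>0$. You take $\psi_R=\chi_R\phi_R$ with $\chi_R$ a \emph{bare radial cutoff} whose gradient and Laplacian are small. With such a $\chi_R$ you would have $D^2\psi_R\to 0$ (relative to $\|\psi_R\|_{L^1}$), so $(D^2-\lambda)\psi_R\approx -\lambda\psi_R$, and $\|(D^2-\lambda)\psi_R\|_{L^1}/\|\psi_R\|_{L^1}\to\lambda$, \emph{not} to $0$. You even acknowledge the problematic term (``$\lambda\psi_R$ is just $\lambda$ times the mass'') but then wrongly claim the quotient tends to zero. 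A cutoff with small $\Delta\chi_R$ proves only $0\in\sigma(D_1^2)$. What is actually needed, and what the paper uses, is Wang's construction of scalar functions $\eta_R$ supported outside $B_{x_o}(R)$ that are \emph{approximate $\lambda$-eigenfunctions of the Laplacian}: $\|(\Delta-\lambda)\eta_R\|_{L^1}\leq (c/R)\|\eta_R\|_{L^1}$ together with $\|\nabla\eta_R\|_{L^1}\leq(c/R)\|\eta_R\|_{L^1}$. These carry an oscillatory factor of type $e^{\i\sqrt\lambda\,\tilde r}$ (as in the $\eta_i$ of Section~\ref{S7}), so that it is $\Delta\eta_R-\lambda\eta_R$ — not $\Delta\eta_R$ — that is small. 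With $\psi_R=\eta_R\phi_R$ the term $(\Delta\eta_R-\lambda\eta_R)\phi_R$ is then genuinely subordinate, and the rest of your estimate (control of $\eta_R D^2\phi_R$ by $\|D^2\phi_R\|_{L^\infty}$, control of $\nabla_{\nabla\eta_R}\phi_R$ by $\|\nabla\phi_R\|_{L^\infty}\|\nabla\eta_R\|_{L^1}$, lower bound $\|\psi_R\|_{L^1}\geq c_1\|\eta_R\|_{L^1}$) goes through exactly as you describe. So the fix is local and self-contained: replace the non-oscillating cutoff by Wang's $\eta_R$; otherwise the proof only establishes $0\in\sigma(D_1^2)$.
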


\begin{proof}
As Wang proves in \cite{Wang97}, under the above assumption on the Ricci curvature, the manifold has uniformly subexponential volume growth, and as a result the spectrum of $D_p^2$ is $L^p$ independent by Theorem \ref{thmD2Lp}. It therefore suffices to prove that the spectrum of $D_1^2$ contains $[0,\infty)$.

Let $\lambda \in [0,\infty)$.   By the estimates of \cite{Wang97}*{p. 478} it follows that for any $R$ large enough there is a function $\eta_R \in C^\infty(M)$ with support on $M\setminus B_{x_o}(R)$, such that
\[
\Vert
 \Delta \eta_R -\lambda \eta_R \Vert_{L^1}\leq (c/R) \; \Vert \eta_R \Vert_{L^1} \quad \text{and} \quad  \Vert \nabla \eta_R \Vert_{L^1}\leq (c/R) \; \Vert \eta_R \Vert_{L^1}.
 \]
where $c$ is a uniform positive constant. Since
\begin{equation} \label{e6_1}
 \begin{split}
  (D^2-\lambda)(\eta_R \phi_R)=& D(\eta_R D \phi_R  +  \nabla \eta_R \cdot \phi_R) - \lambda \eta_R \phi_R\\
  =& \eta_R D^2 \phi_R - 2\nabla_{ \nabla  \eta_R} \phi_R + (\Delta \eta_R -\lambda \eta_R) \phi_R
 \end{split}
 \end{equation}
we have the estimate
\begin{align*}
\frac{ \Vert (D^2-\lambda)(\eta_R \phi_R)\Vert_{L^1}}{\Vert \eta_R \phi_R\Vert_{L^1}}\leq&  \frac{\Vert \eta_R D^2 \phi_R\Vert_{L^1} + 2 \Vert \nabla \phi_R \Vert_{L^\infty} \, \Vert \nabla \eta_R \Vert_{L^1} + c_2 \Vert  (\Delta \eta_R -\lambda \eta_R)\Vert_{L^1}}{c_1\Vert \eta_R\Vert_{L^1}}\\
\leq & \frac{ C \left( 1/R +  \|D^2\phi_R\|_{L^\infty} \right) \Vert  \eta_R\Vert_{L^1}}{\Vert \eta_R\Vert_{L^1}}\to 0 \ \ \text{  as } \ \ R\to \infty.
\end{align*}

Under our additional assumption that $\sigma (D_1)\neq \mathbb{C}$ and $\sigma(D_2)$ is symmetric, we have that $\sigma(D_p)$ is independent of $p$, see Theorem~\ref{thmD2Lp}. Since $\sigma(D_2)$ is symmetric, then  $\sigma(D_2)=\mathbb{R}$. Hence  $\sigma(D_p)=\mathbb{R}$ for all $p$.
\end{proof}

\begin{corl}
Let $(M^n,g)$ be a complete  Riemannian spin manifold  with associated classical Dirac operator $\Dc$.  Suppose that the Ricci curvature of $M$ is asymptotically nonnegative as in Theorem \ref{thmWLp} and for all $R>0$  large enough there is a spinor $\phi_R$  on $M\setminus B_p(R)$ with $0<c_1<|\phi_R|< c_2$ on $M\setminus B_p(R)$, $|\nabla \phi_R|\leq C$, and $\|\Dc^2 \phi_R\|_{L^\infty}\to 0$ as $R\to \infty$.

Then, the $L^p$-spectrum of the classical square Dirac operator $\Dc^2_p$ is $[0,\infty)$ for all $p\in[1,\infty]$.

If in addition $n\neq 3$ modulo $4$ or  $M$ has an orientation-reversing isometry that lifts to spin structure, then  $\sigma(D_2)=\mathbb{R}$. If we also know  that $\sigma(\Dc_1)\neq \mathbb{C},$ then  $\sigma(\Dc_p)=\mathbb{R}$ for all $p\in[1,\infty)$.
\end{corl}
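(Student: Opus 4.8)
The plan is to obtain this corollary purely by specialization: take $S$ to be the spinor bundle and $D=\Dc$ the classical Dirac operator in Theorem~\ref{thmWLp}, and then feed the resulting conclusion into the symmetry results of \cite{AG16} and into Corollary~\ref{corDLp}. First I would check that the hypotheses of Theorem~\ref{thmWLp} are in force. The Ricci assumption $\mathrm{Ric}\ge -\delta(n) r^{-2}$ is assumed verbatim, and so is the existence, for all large $R$, of a spinor $\phi_R$ on $M\setminus B_{x_o}(R)$ with $0<c_1<|\phi_R|<c_2$, $|\nabla\phi_R|\le C$ and $\|\Dc^2\phi_R\|_{L^\infty}\to 0$. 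The only point that needs a remark is that the Clifford contraction is bounded below: here $\mathcal{R}=\mathrm{scal}/4$, and the Ricci pinching gives $\mathrm{scal}\ge -n\delta(n) r^{-2}$, hence $\mathrm{scal}\ge -n\delta(n)$ on $M\setminus B_{x_o}(1)$, while on the relatively compact ball $B_{x_o}(1)$ the smooth function $\mathrm{scal}$ is bounded below by continuity; thus $\mathcal{R}\ge -K_1$ for a suitable $K_1\ge 0$. With these hypotheses verified, Theorem~\ref{thmWLp} yields directly $\sigma(\Dc_p^2)=[0,\infty)$ for all $p\in[1,\infty]$.

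For the statement about $\sigma(\Dc_2)$, I would invoke \cite{AG16}*{Lemmas~B.10 and~B.11}: when $n\not\equiv 3 \bmod 4$, or when $M$ admits an orientation-reversing isometry that lifts to the chosen spin structure, the $L^2$-spectrum $\sigma(\Dc_2)$ is symmetric about $0$. Since $\Dc_2$ is self-adjoint, $\sigma(\Dc_2)\subset\mathbb{R}\neq\mathbb{C}$, so by the Clifford-bundle version of Lemma~B.8 of \cite{AG16} recalled in Section~\ref{sec5} we have $\lambda^2\in\sigma(\Dc_2^2)$ if and only if $\lambda$ or $-\lambda$ lies in $\sigma(\Dc_2)$. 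As $\sigma(\Dc_2^2)=[0,\infty)$, every $\lambda\in\mathbb{R}$ has $\lambda^2\in\sigma(\Dc_2^2)$, so $\lambda$ or $-\lambda$ belongs to $\sigma(\Dc_2)$, and symmetry then forces $\lambda\in\sigma(\Dc_2)$; hence $\sigma(\Dc_2)=\mathbb{R}$.

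Finally, under the extra hypothesis $\sigma(\Dc_1)\neq\mathbb{C}$, I would note that all the hypotheses of Corollary~\ref{corDLp} are met: the Ricci lower bound holds, the uniformly subexponential volume growth follows from the Ricci pinching via Wang's estimate in \cite{Wang97} (this is exactly the reduction already used in the proof of Theorem~\ref{thmWLp}), and the dimension/isometry condition is the one assumed here. Corollary~\ref{corDLp} then gives that $\sigma(\Dc_p)$ is independent of $p$ for $p\in[1,\infty)$, so combining with the previous paragraph yields $\sigma(\Dc_p)=\sigma(\Dc_2)=\mathbb{R}$ for all $p\in[1,\infty)$. I do not expect a genuine obstacle in this argument; the only thing worth writing out carefully, as indicated above, is the passage from the Ricci pinching to a lower bound on $\mathrm{scal}$ (and hence on $\mathcal{R}$) so that Theorem~\ref{thmWLp} is literally applicable.
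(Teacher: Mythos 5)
Your proof is correct and follows essentially the same route as the paper: specialize Theorem~\ref{thmWLp} to the spin Dirac operator after noting that $\mathcal{R}=\mathrm{scal}/4$ inherits a lower bound from the Ricci pinching, then combine $\sigma(\Dc_2^2)=[0,\infty)$ with the symmetry of $\sigma(\Dc_2)$ from \cite{AG16} and the $p$-independence of Corollary~\ref{corDLp}. The paper's own proof is telegraphic and leaves implicit the scalar-curvature remark and the passage from $\sigma(\Dc_2^2)=[0,\infty)$ to $\sigma(\Dc_2)=\mathbb{R}$ via the Lemma~B.8 equivalence, which you correctly supply.
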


\begin{proof}
The proof of the Corollary is immediately given by the last Theorem since in this case the scalar curvature is also asymptotically nonnegative and hence bounded from below.

The remaining statement for $\sigma(D_p)$ follows from Corollary~\ref{corDLp}.
\end{proof}

\section{The \texorpdfstring{$L^2$}{L2}-spectrum of the square Dirac operator} \label{S7}

In \cite{ChLu2}*{Theorem 2.3} the first author and Lu, proved a generalized Weyl criterion for the $L^2$-spectrum of a densely defined self-adjoint operator. This criterion can also be applied to the square of the Dirac operator over a Clifford bundle, since it is a self-adjoint and nonnegative operator. In fact, given that the resolvent operator $(D^2+1)^{-1}$ of $D^2$ is bounded on $L^\infty$, the proof of Theorem 1.1 in \cite{ChLu2} which was stated for the particular case of the Laplacian on functions directly generalizes to our setting, giving us the following result.
\begin{thm} \label{thm71} Let $(M,g)$ be a complete Riemannian manifold with Clifford bundle $S$ and associated Dirac operator $D$. Assume that for $\lambda>0$ there exists a sequence of sections of the Clifford bundle $\psi_i\in L^1\cap L^\infty$ in the domain of $D_1^2$ such that
\[
\frac{\|\psi_i\|_{L^\infty} \|(D^2-\lambda)\psi_i\|_{L^1}}{\|\psi_i\|_{L^2}^2} \to 0 \ \ \text{as} \ \ i \to \infty.
\]
Then $\lambda \in \sigma(D_2^2)$.

In addition, if for any compact subset $K$ of $M$ the support of the $\psi_i$ lies outside  $K$ for $i$ large enough, then $\lambda$ belongs to the essential spectrum of $D_2^2$.
\end{thm}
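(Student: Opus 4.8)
The plan is to transcribe the proof of the generalized Weyl criterion of \cite{ChLu2}*{Theorem~2.3} to the Clifford bundle setting and to check that every estimate there is of scalar type, hence insensitive to $S$: the $L^1$, $L^2$ and $L^\infty$ bounds below are applied only to $|\varphi|$ or to the pointwise Hermitian pairing $\langle\cdot,\cdot\rangle$, and the semigroup enters solely through bounds already established. The one ingredient beyond self-adjointness and nonnegativity of $D_2^2$ is that $(D_2^2-\lambda)^{-1}$ is bounded on $L^\infty$ for $\lambda\in\rho(D_2^2)$; in the scalar case this is the Markov property of $e^{-t\Delta}$, and here it comes from Lemma~\ref{Dom3} (since $\mathcal R\geq -K_1$ gives $\|e^{-tD^2}\|_{\infty\to\infty}\leq e^{K_1t}$, whence $(D_2^2+s)^{-1}=\int_0^\infty e^{-st}e^{-tD^2}\,dt$ is bounded on $L^\infty$ for $s>K_1$) together with the $L^p$-independence of $\sigma(D_p^2)$ from Theorem~\ref{thmD2Lp}, which identifies $(D_2^2-\lambda)^{-1}$ with the $L^\infty$-resolvent on $L^2\cap L^\infty$ and yields $\|(D_2^2-\lambda)^{-1}\|_{\infty\to\infty}\le C(\lambda)<\infty$.

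\emph{First statement.} Argue by contradiction: suppose $\lambda\in\rho(D_2^2)$. After replacing $\psi_i$ by $e^{-t_iD^2}\psi_i$ for a sufficiently small $t_i\downarrow0$ — which, by Lemma~\ref{Dom3}, alters $\|\psi_i\|_{L^\infty}$, $\|(D^2-\lambda)\psi_i\|_{L^1}$ and $\|\psi_i\|_{L^2}$ each by a factor $1+o(1)$ and commutes with $D^2-\lambda$ — we may assume $\psi_i\in\mathrm{dom}(D_2^2)\cap L^1\cap L^\infty$. Put $u_i:=(D_2^2-\lambda)^{-1}\psi_i$, so $\|u_i\|_{L^\infty}\leq C(\lambda)\|\psi_i\|_{L^\infty}$. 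Since $D_2^2$ is self-adjoint and $\lambda\in\mathbb R$,
\[
\|\psi_i\|_{L^2}^2=\bigl((D_2^2-\lambda)\psi_i,\,(D_2^2-\lambda)^{-1}\psi_i\bigr)=\bigl((D^2-\lambda)\psi_i,\,u_i\bigr)\leq\|(D^2-\lambda)\psi_i\|_{L^1}\,\|u_i\|_{L^\infty},
\]
hence $\|\psi_i\|_{L^\infty}\|(D^2-\lambda)\psi_i\|_{L^1}/\|\psi_i\|_{L^2}^2\geq C(\lambda)^{-1}>0$, contradicting the hypothesis. Therefore $\lambda\in\sigma(D_2^2)$.

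\emph{Second statement.} If $\lambda\notin\sigma_{\mathrm{ess}}(D_2^2)$ then, being already in $\sigma(D_2^2)$, it is an isolated eigenvalue of finite multiplicity; let $P$ be the (finite-rank) spectral projection onto $\ker(D_2^2-\lambda)$, spanned by $L^2$-eigensections $e_1,\dots,e_N$. If the supports of the $\psi_i$ eventually leave every compact set, then for each $k$, $|(\psi_i,e_k)|\leq\|\psi_i\|_{L^2}\,\|e_k\|_{L^2(\mathrm{supp}\,\psi_i)}$ with $\|e_k\|_{L^2(\mathrm{supp}\,\psi_i)}\to0$, so $\|P\psi_i\|_{L^2}=o(\|\psi_i\|_{L^2})$ and thus $\|(I-P)\psi_i\|_{L^2}=(1-o(1))\|\psi_i\|_{L^2}$. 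Now run the first part with $D_2^2$ replaced by its restriction $\widetilde D^2$ to $(I-P)L^2$, on which $\lambda$ is in the resolvent set and $(D^2-\lambda)(I-P)\psi_i=(D^2-\lambda)\psi_i$ (as $P\psi_i\in\ker(D^2-\lambda)$); the displayed estimate gives $\|(I-P)\psi_i\|_{L^2}^2\leq\|(\widetilde D^2-\lambda)^{-1}\|_{\infty\to\infty}\,\|(D^2-\lambda)\psi_i\|_{L^1}\,\|(I-P)\psi_i\|_{L^\infty}$, and combined with $\|(I-P)\psi_i\|_{L^2}=(1-o(1))\|\psi_i\|_{L^2}$ this again contradicts the hypothesis, provided $(\widetilde D^2-\lambda)^{-1}$ is bounded on $L^\infty$ and $\|(I-P)\psi_i\|_{L^\infty}\lesssim\|\psi_i\|_{L^\infty}$ — both of which reduce to the $L^\infty$-boundedness of the finite-rank projection $P$, i.e. to boundedness of the eigensections $e_k$.

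\emph{Main obstacle.} The conceptual content is entirely in the displayed estimate; the work lies in the bookkeeping around $L^\infty$-mapping properties. The two points to be checked carefully are (i) the mollification step, used to move $\psi_i$ from $\mathrm{dom}(D_1^2)$ into $\mathrm{dom}(D_2^2)$ without spoiling the three norms — straightforward from Lemma~\ref{Dom3} and strong continuity of $e^{-tD^2}$ on $L^2$, but requiring a little care when one also wants the $L^2$-mass to stay essentially localized away from compacts in the essential-spectrum case, which follows from the domination/heat-kernel bounds of Proposition~\ref{Tb1}; and (ii) the $L^\infty$-boundedness of the projection $P$ onto an isolated finite-dimensional eigenspace, equivalently that $L^2$-eigensections of $D^2$ are bounded — this follows from elliptic regularity together with the heat-kernel bounds of Proposition~\ref{Tb1}, exactly as in the scalar case treated in \cite{ChLu2}. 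Everything else transfers verbatim, since the inequalities involved never see the bundle structure of $S$.
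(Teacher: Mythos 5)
The paper gives no detailed proof of Theorem~\ref{thm71}; it simply observes that the generalized Weyl criterion of \cite{ChLu2}*{Theorem~2.3} carries over once one knows that $(D^2+1)^{-1}$ is bounded on $L^\infty$ (an implicit standing assumption, which under $\mathcal{R}\ge-K_1$ follows from Lemma~\ref{Dom3}). Your core estimate
\[
\|\psi_i\|_{L^2}^2=\bigl((D^2-\lambda)\psi_i,(D^2-\lambda)^{-1}\psi_i\bigr)
\le\|(D^2-\lambda)\psi_i\|_{L^1}\,\|(D^2-\lambda)^{-1}\psi_i\|_{L^\infty}
\]
is the right duality inequality and is surely the heart of the \cite{ChLu2} argument. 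The problem is how you justify the second factor.

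You obtain $\|(D_2^2-\lambda)^{-1}\|_{\infty\to\infty}\le C(\lambda)$ by invoking the $L^p$-independence result of Theorem~\ref{thmD2Lp}. That theorem requires, on top of the lower bound on $\mathcal R$, a lower Ricci bound \emph{and} uniformly subexponential volume growth. None of these appear in the statement of Theorem~\ref{thm71}, and the paper's own remark explicitly says the only ingredient beyond nonnegative self-adjointness is the $L^\infty$-boundedness of $(D^2+1)^{-1}$. So what you have proved is a strictly weaker statement, valid only on manifolds already within the scope of Theorem~\ref{thmD2Lp}; in particular, you could not use your version of Theorem~\ref{thm71} to prove results like the asymptotically flat case in Section~\ref{S7} without importing an extra volume-growth hypothesis. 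The gap is genuine and not merely cosmetic: passing from $L^\infty$-boundedness of $(D^2+s)^{-1}$ for one $s>K_1$ to $L^\infty$-boundedness of $(D^2-\lambda)^{-1}$ for a positive $\lambda$ in a resolvent gap cannot be done by the resolvent identity and a Neumann series, since $(1+\lambda)\|(D^2+1)^{-1}\|_{\infty\to\infty}\ge 1$; the $L^p$-independence machinery is exactly what you reached for to plug that hole, and it is not available here. This is precisely the ``main obstacle'' you flag, and it is not resolved by the proposal. The same issue recurs in the essential-spectrum part, where you need $L^\infty$-boundedness of the spectral projection onto an isolated eigenspace; you reduce this to boundedness of the eigensections, which you justify by heat-kernel bounds that again require curvature hypotheses not present in the theorem. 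To match the paper's (and \cite{ChLu2}'s) level of generality one needs the argument that works with $(D^2+1)^{-1}$ alone; as written, your proof substitutes a heavier and more restrictive tool.
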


In Section 6 of \cite{ChLu2}  the first author and Lu  proved the following
\begin{lem} \label{lemEst}
Let $r(x)$ be the radial function with respect to $p$ on the manifold $M$.  Suppose that the Ricci curvature in the radial direction away from $p$ satisfies
\[
{\rm Ric}(\p_r, \p_r)\geq -(n-1) \delta(r),
\]
where  $\delta(r)$ is a continuous function on $\mathbb{R}^+$ such that ${\displaystyle \lim_{r\to\infty} \delta(r)=0}$, and $\p_r$ is the unit vector field in the radial direction. In the case that the volume of $M$ is finite, we make the further assumption that its volume does not decay exponentially at $p$. Then there exists a sequence of uniformly bounded test functions $\eta_i$ with disjoint supports such that
\[
\frac{\|(\Delta -\lambda)\eta_i\|_{L^1}}{\|\eta_i\|_{L^2}^2}\to 0 \ \ \text{as} \ \ i\to\infty
\]
\end{lem}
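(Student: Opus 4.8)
The plan is to produce, for each fixed $\lambda\ge 0$, a sequence of radial ``wave packets'' supported on a family of disjoint geodesic annuli marching off to infinity, and to show that the asymptotic Ricci lower bound is exactly what makes the resulting error terms negligible. The case $\lambda=0$ is trivial (take $\eta_i=\chi_i(r)$), so set $\omega=\sqrt\lambda>0$. Given disjoint intervals $[a_i,b_i]\subset(0,\infty)$ with $a_i\to\infty$, I would choose cut-offs $\chi_i\in C_c^\infty((a_i,b_i))$ with $\chi_i\equiv 1$ on a subinterval $[a_i+\ell_i,\,b_i-\ell_i]$ of length $\ge\pi/\omega$, $0\le\chi_i\le1$, $|\chi_i'|\le C/\ell_i$, $|\chi_i''|\le C/\ell_i^2$, and put $\eta_i(x)=\chi_i(r(x))\cos(\omega\,r(x))$. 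These are smooth, compactly supported, bounded by $1$, and have disjoint supports leaving every compact set (which will additionally give the essential-spectrum assertion once the lemma is inserted into Theorem~\ref{thm71}). Since $\eta_i$ is a function of $r$ alone, the polar-coordinate identity $\Delta(h(r))=-h''(r)+(\Delta r)\,h'(r)$ (valid in the barrier sense past the cut locus), together with $\omega^2=\lambda$, gives
\[
(\Delta-\lambda)\eta_i=-\omega\,(\Delta r)\,\chi_i(r)\sin(\omega r)+\bigl(\chi_i'(r)\,\Delta r-\chi_i''(r)\bigr)\cos(\omega r)+2\omega\,\chi_i'(r)\sin(\omega r) ,
\]
so, writing $A_i=\{a_i\le r\le b_i\}$ and $T_i=\{\chi_i'\ne 0\}\subset A_i$ for the two boundary collars,
\[
\|(\Delta-\lambda)\eta_i\|_{L^1}\le \omega\!\int_{A_i}\!|\Delta r|\,dv\;+\;\int_{T_i}\Bigl(\tfrac1{\ell_i}|\Delta r|+\tfrac1{\ell_i^2}+\tfrac{2\omega}{\ell_i}\Bigr)dv .
\]

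For the denominator I would use $\|\eta_i\|_{L^2}^2\ge\int_{a_i+\ell_i}^{b_i-\ell_i}\cos^2(\omega t)\,\mathrm{Area}(\partial B_t)\,dt$, and the fact that over a radial interval of length $\ge\pi/\omega$ the mean of $\cos^2(\omega\cdot)$ is $\tfrac12$, to reduce the claim to showing that both pieces of the numerator are $o\bigl(V(\{a_i+\ell_i\le r\le b_i-\ell_i\})\bigr)$ along a suitable choice of annuli. Three geometric inputs drive this, all consequences of $\mathrm{Ric}(\partial_r,\partial_r)\ge-(n-1)\delta(r)$ with $\delta\to0$: the Laplacian comparison theorem bounds the mean curvature of geodesic spheres, $-\Delta r\le(n-1)\,\mathrm{sn}_\delta'(r)/\mathrm{sn}_\delta(r)=:\bar m(r)$ with $\bar m(r)\to0$; the Bishop--Gromov inequality bounds ball and sphere volumes by the model, which — together with the divergence identity $\int_{B_r}\Delta r\,dv=-\mathrm{Area}(\partial B_r)$ — lets one estimate $\int_{A_i}|\Delta r|\,dv$ through volumes even though $\Delta r$ is only one-sidedly controlled pointwise; and, crucially, by Wang's observation \cite{Wang97} the asymptotic Ricci bound forces (base-pointed) subexponential volume growth. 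A telescoping/pigeonhole argument over scales then produces infinitely many ``balanced'' annuli — roughly, if at \emph{every} large scale the sphere-volume function either degenerated or concentrated a fixed proportion of the annulus volume in its boundary collars, then $V(B_r)$ would grow at least exponentially. Choosing $[a_i,b_i]$ (with $\ell_i\to\infty$ but $\ell_i\ll b_i-a_i$) along such scales makes the curvature error $\omega\int_{A_i}|\Delta r|\,dv$ and the cut-off errors $\ell_i^{-1}\!\int_{T_i}(\dots)$ both $o(\|\eta_i\|_{L^2}^2)$, so the ratio in the statement tends to $0$. When $V(M)<\infty$, $V(B_r)$ no longer tends to infinity; instead I would run the identical selection on the exterior volumes $V(M\setminus B_p(r))$, the hypothesis that these do not decay exponentially playing precisely the role subexponential growth did above.

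I expect the main obstacle to be exactly this last selection step. With only an \emph{asymptotic} Ricci lower bound — no rate on $\delta$, and no two-sided curvature control — one has only one-sided pointwise bounds on $\Delta r$ and on how fast $\mathrm{Area}(\partial B_t)$ can vary, so the annuli cannot be prescribed by fiat; the real work is to extract, from the subexponential volume growth (respectively the non-exponential decay of the exterior volume), infinitely many scales at which the sphere-volume function neither collapses nor piles up near the ends of the annulus, so that the $L^2$-norm lower bound and the numerator upper bound are compatible. Once such scales are in hand, the polar-coordinate computation, the $\cos^2$ averaging, and the comparison estimates combine routinely to give $\|(\Delta-\lambda)\eta_i\|_{L^1}/\|\eta_i\|_{L^2}^2\to0$, and feeding the sequence into Theorem~\ref{thm71} then places $\lambda$ in $\sigma(\Delta_2)$ — in fact in the essential spectrum.
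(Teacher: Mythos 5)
Your overall strategy --- radial oscillating wave packets on annuli marching to infinity, with the three inputs (Laplacian comparison, Bishop--Gromov, and subexponential volume growth via Wang's observation) feeding a pigeonhole selection of ``balanced'' annuli --- is the right one and matches the construction this paper imports from~\cite{ChLu2}. However, your write-up has one structural gap and two technical issues worth flagging.

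First, the paper does not re-prove this lemma; it cites it from Section~6 of~\cite{ChLu2} and only records the shape of the test functions, namely
\[
\eta_i(x)=\chi_i\bigl(\tilde r /R_i\bigr)\,e^{\sqrt{-1}\,\sqrt{\lambda}\,\tilde r},
\]
where $\tilde r$ is a \emph{smoothing} of the radial function $r$ with $\nabla\tilde r$ uniformly bounded. This differs from your $\eta_i=\chi_i(r)\cos(\omega r)$ in two substantive ways. (i) You work with the raw distance function $r$, which is only Lipschitz; past the cut locus its distributional Laplacian has a singular measure part, so $\Delta\eta_i$ is a measure, not an $L^1$ function, and $\eta_i$ need not lie in the domain of the $L^1$-Laplacian at all. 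Invoking the barrier sense is enough for comparison-type upper bounds, but not for computing $\|(\Delta-\lambda)\eta_i\|_{L^1}$ as a genuine Lebesgue integral. This is precisely why the cited construction replaces $r$ by a smoothed $\tilde r$, at the price of an extra error term $\lambda\bigl(|\nabla\tilde r|^2-1\bigr)$ which must then be shown small (so one needs $|\nabla\tilde r|\to 1$ asymptotically, not merely boundedness). (ii) Replacing $e^{\sqrt{-1}\sqrt\lambda\,\tilde r}$ by $\cos(\omega r)$ costs you the identity $|e^{\sqrt{-1}\sqrt\lambda\,\tilde r}|\equiv 1$, which makes $\|\eta_i\|_{L^2}^2\gtrsim V(\text{plateau annulus})$ immediate. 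With a cosine, your lower bound via ``mean of $\cos^2$ is $1/2$'' is not automatic: there is no a priori reason the measure $\mathrm{Area}(\partial B_t)\,dt$ cannot concentrate near the zeros of $\cos(\omega t)$, and you would have to argue (e.g.\ by shifting the window, or summing over several periods and using Bishop--Gromov to control oscillation of the area function) that this does not happen. Neither issue is fatal, but neither is addressed.

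Second, and more seriously as a matter of completeness: you correctly identify the pigeonhole selection of annuli as the crux --- and then stop there, stating the needed dichotomy (``if at every large scale the sphere-volume either degenerated or concentrated, then $V(B_r)$ grows exponentially'') without actually carrying it out, including in the finite-volume case where the selection must be run on the exterior volumes $V(M\setminus B_p(r))$. Since this selection argument \emph{is} the proof (everything else is a routine polar-coordinate computation plus the comparison theorems), the proposal as written is an outline of the right strategy rather than a proof; the quantitative annulus-selection estimate, which is exactly what~\cite{ChLu2} and~\cite{Wang97} supply, is the missing content.
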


The functions $\eta_i$ are defined as
\begin{equation}
 \eta_i(x)=\chi_i({\tilde{r}}/{R_i})\, e^{\sqrt{-1}\sqrt{\lambda}\,\tilde{r}}.
\end{equation}
where $\tilde r(x)$ is a smoothing of the radial function $r(x)$, with $\nabla \tilde{r}$ uniformly bounded, and $\chi_i\colon  \mathbb{R}^+\to \mathbb{R}^+$ are smooth cut-off functions with support on $[x_i/R_i-1, y_i/R_i+1]$, where $x_i, y_i, R_i$ are large positive numbers such that $x_i>2R_i$ and $y_i>x_i+2R_i$.  Moreover,  $0\leq \chi_i \leq 1$ on $M$,  $\chi_i=1$ on $[x_i/R_i,y_i/R_i]$, and  $|\chi'_i|, |\chi''_i|$  are bounded uniformly.

Take a  family of sections of the Clifford bundle $\phi_i$  such that for all $i$ large enough $0<c_1<|\phi_i|<c_2$, and set $\psi_i = \eta_i \phi_i$. Then, by using \eref{e6_1},
\begin{align*}
 \|(D^2-\lambda)\psi_i\|_{L^1} \leq  &C \|D^2\phi_i\|_{L^1(B_p(y_i+R_i)\setminus B_p(x_i-R_i))} \\
 & \quad + \frac{C}{R_i} \|\nabla \phi_i\|_{L^1(B_p(y_i+R_i)\setminus B_p(x_i-R_i))} + \|(\Delta-\lambda)\eta_i\|_{L^1}
\end{align*}
Note that for manifolds as in Lemma \ref{lemEst} it was proved in  \cite{ChLu2} that $\text{Vol}(B_p(y_i+R_i)) \leq C \|\eta_i\|_{L^2}^2$, therefore if we have sections with the property that $D^2\phi_i \to 0$ as $i \to \infty$ and $|\nabla \phi_i|\leq C$, then
\begin{align*}
\frac{\|\psi_i\|_{L^\infty} \|(D^2-\lambda)\psi_i\|_{L^1}}{\|\psi_i\|_{L^2}^2}  \leq C \|D^2\phi_i\|_{L^\infty(B_p(y_i+R_i)\setminus B_p(x_i-R_i))} + \frac{C}{R_i} + \frac{\|(\Delta -\lambda)\eta_i\|_{L^1}}{\|\eta_i\|_{L^2}^2}.
\end{align*}
This observation together with Theorem \ref{thm71} proves the following result.
\begin{thm} \label{thm72}
Suppose that $M$ is a complete Riemannian manifold with Ricci curvature asymptotically nonnegative in the radial direction and satisfying the assumptions of Lemma \ref{lemEst}. In addition suppose that there exists a family of sections of the Clifford bundle $\phi_i$ such that (except possibly on a compact set $K$) for all $i$ large enough $0<c_1<|\phi_i|<c_2$, $|\nabla \phi_i|\leq C$, and $\|D^2\phi_i\|_{L^\infty} \to 0$ as $i \to \infty$.  Then $\sigma(D_2^2)=[0,\infty)$.
\end{thm}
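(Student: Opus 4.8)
The plan is to assemble the proof directly from the pieces already laid out in the excerpt. The only real work is to verify that the hypotheses of Theorem~\ref{thm71} (the generalized Weyl criterion for $D_2^2$) are met for every $\lambda > 0$, and separately to handle $\lambda = 0$. First I would fix $\lambda > 0$. I would invoke Lemma~\ref{lemEst}: since $M$ has Ricci curvature asymptotically nonnegative in the radial direction (and, if $\mathrm{Vol}(M) < \infty$, nonexponential volume decay at $p$), there is a sequence of uniformly bounded test functions $\eta_i$ with pairwise disjoint supports, of the explicit form $\eta_i(x) = \chi_i(\tilde r / R_i)\, e^{\sqrt{-1}\sqrt{\lambda}\,\tilde r}$, satisfying $\|(\Delta - \lambda)\eta_i\|_{L^1} / \|\eta_i\|_{L^2}^2 \to 0$. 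I would then set $\psi_i = \eta_i \phi_i$, where the $\phi_i$ are the given sections with $0 < c_1 < |\phi_i| < c_2$, $|\nabla\phi_i| \le C$, and $\|D^2\phi_i\|_{L^\infty} \to 0$. Using the product identity \eqref{e6_1}, namely $(D^2 - \lambda)(\eta_i\phi_i) = \eta_i D^2\phi_i - 2\nabla_{\nabla\eta_i}\phi_i + (\Delta\eta_i - \lambda\eta_i)\phi_i$, and the triangle inequality, I obtain the displayed bound
\[
\|(D^2-\lambda)\psi_i\|_{L^1} \le C\|D^2\phi_i\|_{L^1(A_i)} + \frac{C}{R_i}\|\nabla\phi_i\|_{L^1(A_i)} + \|(\Delta - \lambda)\eta_i\|_{L^1},
\]
where $A_i = B_p(y_i + R_i)\setminus B_p(x_i - R_i)$ contains $\mathrm{supp}\,\psi_i$.

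Next I would normalize by $\|\psi_i\|_{L^2}^2$ and $\|\psi_i\|_{L^\infty}$. Since $c_1 \le |\phi_i| \le c_2$ we have $\|\psi_i\|_{L^\infty} \le c_2 \|\eta_i\|_{L^\infty}$ (uniformly bounded) and $\|\psi_i\|_{L^2}^2 \ge c_1^2 \|\eta_i\|_{L^2}^2$, while $\|\nabla\phi_i\|_{L^1(A_i)} \le C\,\mathrm{Vol}(A_i) \le C\,\mathrm{Vol}(B_p(y_i + R_i))$ and similarly for the $L^1$-norm of $D^2\phi_i$. The key volume estimate, established in \cite{ChLu2} for the functions $\eta_i$ coming from Lemma~\ref{lemEst}, is $\mathrm{Vol}(B_p(y_i + R_i)) \le C\|\eta_i\|_{L^2}^2$. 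Combining these yields
\[
\frac{\|\psi_i\|_{L^\infty}\,\|(D^2-\lambda)\psi_i\|_{L^1}}{\|\psi_i\|_{L^2}^2} \le C\|D^2\phi_i\|_{L^\infty(A_i)} + \frac{C}{R_i} + \frac{\|(\Delta - \lambda)\eta_i\|_{L^1}}{\|\eta_i\|_{L^2}^2},
\]
and each of the three terms on the right tends to $0$ as $i \to \infty$ (the first by hypothesis on $\phi_i$, the second since $R_i \to \infty$, the third by Lemma~\ref{lemEst}). I should also check $\psi_i \in L^1 \cap L^\infty$ and $\psi_i \in \mathrm{dom}(D_1^2)$: the $\eta_i$ are compactly supported and smooth, $\phi_i$ is smooth and bounded with bounded covariant derivative on the relevant annulus, so $\psi_i$ is a compactly supported smooth section, hence lies in the domain of $D_1^2$ (it is the $L^1$-graph-norm closure of the operator on $C_c^\infty(S)$) and trivially in $L^1 \cap L^\infty$. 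Theorem~\ref{thm71} then gives $\lambda \in \sigma(D_2^2)$ for every $\lambda > 0$.

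For $\lambda = 0$ I would argue by closedness: $\sigma(D_2^2)$ is closed in $\mathbb{C}$, it contains $(0,\infty)$ by the above, hence it contains $0$ as well; thus $[0,\infty) \subseteq \sigma(D_2^2)$. The reverse inclusion $\sigma(D_2^2) \subseteq [0,\infty)$ is automatic because $D_2^2$ is a nonnegative self-adjoint operator on $L^2(S)$ (it is the unique self-adjoint extension of the square Dirac operator, and $(D_2\varphi, D_2\varphi) \ge 0$). Therefore $\sigma(D_2^2) = [0,\infty)$, which is the claim. I expect the only genuinely delicate point to be the justification that the $\phi_i$ can actually be multiplied into the $\eta_i$ without spoiling the estimate where the supports $A_i$ grow — i.e., that the growth of $\mathrm{Vol}(A_i)$ is absorbed by $\|\eta_i\|_{L^2}^2$; but this is exactly the content of the volume bound $\mathrm{Vol}(B_p(y_i+R_i)) \le C\|\eta_i\|_{L^2}^2$ quoted from \cite{ChLu2}, so the obstacle is already cleared by the cited work, and the remaining steps are routine bookkeeping with the triangle inequality and the product rule \eqref{e6_1}.
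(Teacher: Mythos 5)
Your proof is correct and follows essentially the same route as the paper: you set $\psi_i = \eta_i\phi_i$, expand via the product identity \eqref{e6_1}, absorb the growth of $\mathrm{Vol}(A_i)$ using the volume bound $\mathrm{Vol}(B_p(y_i+R_i)) \leq C\|\eta_i\|_{L^2}^2$ from \cite{ChLu2}, and feed the resulting estimate into the generalized Weyl criterion of Theorem~\ref{thm71}. You spell out a few points the paper leaves implicit --- the $\lambda=0$ case by closedness of the spectrum, the reverse inclusion $\sigma(D_2^2)\subset[0,\infty)$ from nonnegativity, and the verification that $\psi_i\in L^1\cap L^\infty\cap\mathrm{dom}(D_1^2)$ --- but these are routine completeness checks, not a different argument.
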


The above result allows us to find a large class of manifolds over which the $L^2$ spectrum of $D^2$ is maximal. For the particular case of the square of the classical Dirac operator over an asymptotically Euclidean spin manifold we can use the Witten spinor constructed by Bartnik to compute the $L^2$-spectrum of $\Dc^2$ and $\Dc$~\cites{Bart, Witten81}.

\begin{Def}
A manifold $(M^n,g)$ with $n\geq 3$ is asymptotically flat if  its scalar curvature is in $L^1(M)$ and there is a compact set $K\subset M$ and a diffeomorphism
$\Phi\colon M \setminus K \to \mathbb{R}^n \setminus B_{r_o}(0)$ such that
\[
(\Phi_* g)_{ij} = \delta_{ij} + O(r^{2-n}), \quad \partial_k (\Phi_* g)_{ij} = O(r^{1-n}), \quad  \partial_{kl} (\Phi_* g)_{ij} = O(r^{-n}).
\]
\end{Def}
If in addition to being asymptotically flat the  manifold has positive scalar curvature and is spin, then Bartnik (see also Finster and Kath~\cite{FK}) proves the following result: We consider the boundary value problem
\begin{equation}\label{WitSp}
\Dc\psi =0, \ \ \lim_{|x|\to \infty} \psi(x) =\psi_0
\end{equation}
where $\psi_0$ is a constant spinor (that is viewed as a spinor on $M\setminus K$ using $\Phi$)
. As Bartnik proves, this problem has a unique solution, called a Witten spinor, which is smooth and satisfies the decay estimates
\[
\psi =\psi_0 + O(r^{2-n}), \quad \partial_k \psi =  O(r^{1-n}), \quad  \partial_{kl} \psi =  O(r^{-n}).
\]

By Theorem \ref{thm72} the following result is immediate.
\begin{prop}
Suppose that $M^n$ with $n\geq 3$ is an asymptotically flat spin manifold with positive scalar curvature. Then the  spectrum of $\Dc_2^2$ is $[0,\infty)$.
If additionally $n\neq 3$ modulo $4$ or  $M$ has an orientation-reversing isometry that lifts to spin structure, then $\sigma(\Dc_2)=\mathbb{R}$.

\end{prop}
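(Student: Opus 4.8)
The plan is to apply Theorem~\ref{thm72}, taking Bartnik's Witten spinor as the required family of Clifford sections. First I would verify the geometric hypotheses. Since $M$ is asymptotically flat with positive scalar curvature, the Clifford contraction of $\Dc$, which equals $\mathrm{scal}/4$, is bounded below (by $0$); from the decay $\partial_{kl}(\Phi_*g)_{ij}=O(r^{-n})$ together with $\partial_k(\Phi_*g)_{ij}=O(r^{1-n})$ one gets that the full Riemann tensor in the asymptotic chart is $O(r^{-n})$ (using $2(1-n)\le -n$ for $n\geq 2$), and since the metric is uniformly equivalent to the Euclidean one outside $K$, the geodesic radial function $r(x)=d(x_o,x)$ is comparable to the coordinate radius $|\Phi(x)|$ there. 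Hence $\mathrm{Ric}(\partial_r,\partial_r)\geq -(n-1)\delta(r)$ for a continuous $\delta$ with $\delta(r)\to 0$ (one is free to let $\delta$ be large on the compact range of $r$, where Ricci is bounded below by compactness, and decay like $r^{-n}$ at infinity). The volume of $M$ grows like $r^n$, so the auxiliary non-decay condition of Lemma~\ref{lemEst} is vacuous; thus the geometric part of Theorem~\ref{thm72} is in force.

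Next I would produce the sections. Let $\psi$ be the Witten spinor solving the boundary problem \eqref{WitSp}; by Bartnik it is smooth on $M$ and satisfies $\psi=\psi_0+O(r^{2-n})$ and $\partial_k\psi=O(r^{1-n})$. Take $\phi_i\equiv\psi$ for every $i$. After enlarging $K$ we have $0<c_1<|\psi|<c_2$ on $M\setminus K$, since $|\psi|\to|\psi_0|>0$, and $|\nabla\psi|\leq C$ globally (bounded, in fact decaying, from the decay of $\partial_k\psi$ and smoothness on the compact part). Crucially $\Dc\psi=0$, whence $\Dc^2\psi=0$ and $\|\Dc^2\phi_i\|_{L^\infty}=0$ for all $i$. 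With these sections, Theorem~\ref{thm72} yields $\sigma(\Dc_2^2)=[0,\infty)$ at once.

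For the last assertion I would argue exactly as in the proof of Corollary~\ref{corDLp}: by Lemmas~B.10 and~B.11 of \cite{AG16}, when $n\not\equiv 3\pmod 4$ or $M$ admits an orientation-reversing isometry lifting to the spin structure, the spectrum of $\Dc_2$ is symmetric under $\lambda\mapsto-\lambda$. Since $\Dc_2$ is self-adjoint, $\sigma(\Dc_2)\subseteq\mathbb R\neq\mathbb C$, so by the relation recalled in Section~\ref{sec5} one has $\lambda^2\in\sigma(\Dc_2^2)$ if and only if $\lambda$ or $-\lambda$ lies in $\sigma(\Dc_2)$; combined with $\sigma(\Dc_2^2)=[0,\infty)$ and symmetry this forces $\mathbb R\subseteq\sigma(\Dc_2)$, hence $\sigma(\Dc_2)=\mathbb R$. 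I do not expect a genuine obstacle here: the substantive input is Bartnik's existence-and-decay theorem for the Witten spinor, which is already quoted in the excerpt, and the only thing needing care is matching his coordinate decay estimates to the geodesic-radial hypotheses of Lemma~\ref{lemEst} (including choosing a single continuous $\delta$ that also absorbs the compact part), which is routine since the two radial functions are comparable near infinity.
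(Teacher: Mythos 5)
Your proposal is correct and follows exactly the route the paper takes: the paper deduces this proposition as an immediate consequence of Theorem~\ref{thm72}, feeding in Bartnik's Witten spinor as the required family $\phi_i$. Your write-up supplies the verifications (Ricci decay of order $r^{-n}$ in the asymptotic chart, comparability of geodesic and coordinate radius, infinite volume so the finite-volume caveat of Lemma~\ref{lemEst} is moot, and the boundedness of $|\psi|$ from above and below off a compact set together with $\Dc^2\psi=0$) that the paper compresses into the phrase ``immediate,'' so nothing is missing.
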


Note that a Witten spinor $\psi$ which satisfies \eqref{WitSp} has the property that $\Dc^2\psi=0$. Hence,  Theorem \ref{thmD2Lp} leads to the following immediate result.
\begin{prop}
Let $M^n$ with $n\geq 3$ be a spin manifold whose Ricci curvature satisfies
\[
\text{Ric}(x) \geq -\delta(n) r^{-2}(x)
\]
where $r(x)=d(x_o,x)$ is the distance from $x$ to a fixed point $x_o$ and $\delta(n)$ is a small constant that depends only on the dimension of the manifold.

Suppose that $M$ admits a Witten spinor which is asymptotically constant and whose gradient vanishes asymptotically.

Then, the $\sigma(\Dc_p^2)=[0,\infty)$ for all $p\in[1,\infty]$.

If additionally $n\neq 3$ modulo $4$ or  $M$ has an orientation-reversing isometry that lifts to spin structure, then $\sigma(\Dc_2)=\mathbb{R}$. If we also know  that $\sigma(\Dc_1)\neq \mathbb{C},$ then  $\sigma(\Dc_p)=\mathbb{R}$ for all $p\in[1,\infty)$.
\end{prop}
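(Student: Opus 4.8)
The plan is to obtain this proposition as a direct application of the corollary to Theorem~\ref{thmWLp}, taking the Witten spinor itself as the family of asymptotically $\Dc^2$-harmonic spinors. First I would verify the standing hypotheses: completeness and the spin condition are given, the radial Ricci bound $\mathrm{Ric}(x)\geq -\delta(n)r^{-2}(x)$ is exactly the one required there, and it moreover forces the Clifford contraction $\mathcal{R}=\mathrm{scal}/4$ of the classical Dirac operator to be bounded below. Indeed, taking the trace of the Ricci bound gives $\mathrm{scal}(x)\geq -n\,\delta(n)\,r^{-2}(x)$, which is $\geq -n\,\delta(n)$ on $M\setminus B_{x_o}(1)$ and bounded below on the compact set $\overline{B_{x_o}(1)}$ by continuity; so $\mathcal{R}$ is bounded below.

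Next I would check that a Witten spinor $\psi$ which is asymptotically constant with asymptotically vanishing gradient provides, for every large $R$, an admissible section $\phi_R:=\psi|_{M\setminus B_{x_o}(R)}$. By construction $\psi$ is smooth, solves $\Dc\psi=0$, and satisfies $\psi\to\psi_0$ at infinity with $\psi_0$ a fixed nonzero constant spinor; hence there are $R_0>0$ and constants $0<c_1<c_2$ with $c_1<|\psi|<c_2$ on $M\setminus B_{x_o}(R_0)$, and since $|\nabla\psi|\to 0$ there is a uniform bound $|\nabla\psi|\leq C$ on that region. Finally $\Dc\psi=0$ yields $\Dc^2\psi\equiv 0$, so $\|\Dc^2\phi_R\|_{L^\infty}=0\to 0$ trivially. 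Thus all hypotheses of the corollary to Theorem~\ref{thmWLp} hold, and we conclude $\sigma(\Dc_p^2)=[0,\infty)$ for all $p\in[1,\infty]$.

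For the remaining assertions I would argue exactly as in the corollary to Theorem~\ref{thmWLp}: when $n\not\equiv 3\pmod 4$ or $M$ carries an orientation-reversing isometry lifting to the spin structure, $\sigma(\Dc_2)$ is symmetric by \cite{AG16}*{Lemmas~B.10 and~B.11}. Since $\Dc_2$ is self-adjoint, in particular $\sigma(\Dc_2)\neq\mathbb{C}$, so $\lambda^2\in\sigma(\Dc_2^2)$ if and only if $\lambda$ or $-\lambda$ lies in $\sigma(\Dc_2)$; combining this with $\sigma(\Dc_2^2)=[0,\infty)$ and the symmetry of $\sigma(\Dc_2)$ forces $\sigma(\Dc_2)=\mathbb{R}$. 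Under the extra hypothesis $\sigma(\Dc_1)\neq\mathbb{C}$, Corollary~\ref{corDLp} gives the $p$-independence of $\sigma(\Dc_p)$, whence $\sigma(\Dc_p)=\sigma(\Dc_2)=\mathbb{R}$ for all $p\in[1,\infty)$.

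Since every ingredient is already in place, there is no serious obstacle; the proof is essentially bookkeeping. The only points that require a word of care are that \emph{asymptotically constant} is meant with a nonzero limiting spinor $\psi_0$, so that the lower bound $|\psi|>c_1>0$ near infinity is genuine, and that the Witten spinor decay estimates recalled before \eqref{WitSp} are strong enough to yield the uniform gradient bound $|\nabla\psi|\leq C$ on a neighborhood of infinity — both of which follow immediately from Bartnik's construction.
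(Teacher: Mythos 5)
Your proposal is correct and follows the route the paper intends: the paper offers this proposition with no detailed argument, merely remarking that a Witten spinor satisfies $\Dc^2\psi=0$ and that the result is then ``immediate'' from Theorem~\ref{thmD2Lp} via the machinery of Section~\ref{S6}, which is precisely the Corollary to Theorem~\ref{thmWLp} that you invoke. Your bookkeeping (trace of the Ricci bound to control $\mathcal R=\mathrm{scal}/4$, nonzero limiting spinor $\psi_0$ to get the two-sided pointwise bound, vanishing gradient for the uniform $|\nabla\psi|\leq C$, and the symmetry argument via \cite{AG16}) fills in exactly the steps the authors left implicit.
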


\appendix
\section{On the definition of the Dirac operator on \texorpdfstring{$L^p$}{Lp}}\label{sec_def_D}

In \cite{AG16} the classical Dirac operator  on $L^p$, $\Dc_p$, for $1\leq p<\infty$ is defined as the completion of $\Dc\colon C_c^\infty(S)\subset L^p(S)\to C_c^\infty(S)\subset L^p(S)$ with respect to the graph norm. Analogously $\Dc_{p}^2 \left(=(\Dc_{p})^2\right)$ is defined and we use this definition as well for general Dirac-type operators as in Section~\ref{sec2}. However, the $L^p$-independence result we proved in Theorem~\ref{thmD2Lp} is a statement on the generator, $H_p$,  of the semigroup $e^{-tD^2}$ on $L^p$ as defined in Lemma~\ref{Dom3}.\medskip

We show next that for a Riemannian manifold with  Ricci curvature bounded from below this generator does in fact coincide with the operator $D^2$ defined above. We begin with the following result.

 \begin{lem} \label{lem8} Let $S_t\colon L^2\to L^2$ be a strongly continuous semigroup, and let $\mathfrak{D}\subset L^2\cap L^p$  be a core of $S_t$. For each $1\leq p<\infty$ we assume that $\mathfrak{D}$ is dense in $L^p$. Define $S_t\colon L^p\to L^p$ to be the completion of $S_t|_{\mathfrak{D}}$ in the graph norm.  Let $H_p\colon L^p \to L^p$ be the corresponding generator, in other words its domain is given by $\text{dom}(H_p)\define \{ \psi\in L^p\ |\ \exists \phi\in L^p\colon \Vert t^{-1}(\text{Id}-S_t) \psi - \phi\Vert_{L^p}\to 0 \text{ as }t\to 0\}$ and $H_p\psi\define \phi$. It follows that $S_t =e^{-H_pt}$.

Assume that $S_t(\mathfrak{D})\cup \mathfrak{D}\subset \text{dom}(H_p)$, that $S_tH_p=H_pS_t$ on $S_t (\mathfrak{D})\cup \mathfrak{D}$  and that  there is a positive constant $K>0$ such that $\|S_t \|_{p \to p}\leq e^{K t}$ for all $t>0$.
Then $\mathfrak{D}$ is a core for $H_p\colon L^p\to L^p$ and $H_p$ is the closure of $H_p|_{\mathfrak{D}}$ with respect to the graph norm.
 \end{lem}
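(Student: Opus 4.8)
The plan is to show that $H_p|_{\mathfrak{D}}$ is \emph{essentially} the generator by verifying the hypotheses of a standard core criterion (e.g.\ \cite{EN}*{Prop.~II.1.7} or the analogous statement in \cite{ReSiII}): a dense subspace $\mathfrak{D}\subset\text{dom}(H_p)$ which is invariant under the semigroup $S_t$ is automatically a core for the generator. First I would record that $S_t=e^{-H_pt}$ on $L^p$, which is immediate from the way $H_p$ is defined as the generator of the completed semigroup; the bound $\|S_t\|_{p\to p}\leq e^{Kt}$ guarantees that $H_p+K$ generates a contraction semigroup and in particular that $(H_p-\mu)^{-1}$ exists and is bounded on $L^p$ for $\mu<-K$. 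Then I would invoke the hypothesis that $\mathfrak{D}$ is dense in $L^p$ and that $S_t(\mathfrak{D})\subset \text{dom}(H_p)\subset L^p$, so $\mathfrak{D}$ is a semigroup-invariant dense subspace of the domain.

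The core criterion then says: a subspace $\mathfrak{D}$ of $\text{dom}(H_p)$ is a core iff it is dense in $L^p$ and $S_t$-invariant. Strictly speaking the cleanest route is the resolvent version: it suffices to show that $(\mu-H_p)\mathfrak{D}$ is dense in $L^p$ for some (hence all) $\mu$ in the resolvent set. I would obtain this from the identity
\[
(\mu-H_p)^{-1}\phi=\int_0^\infty e^{\mu t}S_t\phi\,dt
\]
valid for $\mu<-K$ and $\phi\in L^p$; for $\phi\in\mathfrak{D}$ the integrand stays in $\mathfrak{D}$ pointwise in $t$ (using $S_t(\mathfrak{D})\subset\mathfrak{D}$, which I read off from $\mathfrak{D}$ being a core for $S_t$ on $L^2$ together with the invariance hypothesis — here one must be a little careful that the $L^2$-invariance transfers to $L^p$, which follows because $S_t$ on $L^p$ is by construction the closure of $S_t|_{\mathfrak{D}}$), so the Riemann sums approximating the integral lie in $\mathfrak{D}$ and converge in $L^p$ graph norm. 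Hence $\overline{\mathfrak{D}}$ in the graph norm contains the range of $(\mu-H_p)^{-1}$ on $\mathfrak{D}$, and since $\mathfrak{D}$ is $L^p$-dense and $(\mu-H_p)^{-1}$ is bounded, this range is dense in $\text{dom}(H_p)$ in the graph norm. Therefore $\mathfrak{D}$ is a core, which is exactly the assertion that $H_p$ is the closure of $H_p|_{\mathfrak{D}}$.

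The commutation hypothesis $S_tH_p=H_pS_t$ on $S_t(\mathfrak{D})\cup\mathfrak{D}$ enters to justify that $H_p$ applied to the integrand behaves well — i.e.\ that $\int_0^\infty e^{\mu t}S_t\psi\,dt$ lands in $\text{dom}(H_p)$ with the expected value when $\psi\in\mathfrak{D}$ — and to guarantee the Riemann-sum approximation is in graph norm rather than merely $L^p$ norm; this is where the exponential bound $\|S_t\|_{p\to p}\le e^{Kt}$ is needed, so that the integral converges absolutely in the graph norm once $\mu<-K$. I expect the main obstacle to be precisely this bookkeeping about which norm the approximations converge in: one needs the invariance $S_t(\mathfrak{D})\subset\mathfrak{D}$ to hold on $L^p$ (not only on $L^2$), and one needs the vector-valued integral $\int_0^\infty e^{\mu t}S_t\psi\,dt$ to be approximable by $\mathfrak{D}$-valued Riemann sums \emph{in the graph norm of $H_p$}, which uses the commutation relation to move $H_p$ under the integral sign. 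Once that is in place the density argument is routine.
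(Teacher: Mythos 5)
The central gap is your reliance on $S_t(\mathfrak{D})\subset\mathfrak{D}$. This is not a hypothesis of the lemma, and it does not follow from $\mathfrak{D}$ being a core of $S_t$ on $L^2$: being a core is a density statement about the graph of the generator, not an invariance statement about $\mathfrak{D}$. In the intended application (Corollary~\ref{cor_dom_D^2}) one takes $\mathfrak{D}=C_c^\infty(S)$ and $S_t=e^{-tD^2}$, and the heat semigroup does not preserve compact support, so $S_t(\mathfrak{D})\not\subset\mathfrak{D}$. The paper's proof explicitly flags that Robinson's argument assumes $S_t(\mathfrak{D})\subset\mathfrak{D}$ and that the whole point of the lemma is to replace that invariance with the weaker $S_t(\mathfrak{D})\cup\mathfrak{D}\subset\text{dom}(H_p)$. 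Consequently the standard core criterion you invoke (density plus $S_t$-invariance implies core, \`a la \cite{EN}*{Prop.~II.1.7}) is not applicable here: its key hypothesis is precisely what is missing, and no amount of ``bookkeeping about which norm the Riemann sums converge in'' can repair that.

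The correct route, and the one the paper takes, is to work with $\hat H_p:=\overline{H_p|_{\mathfrak{D}}}$ directly and show $\text{Image}(\lambda\,\text{Id}-\hat H_p)=L^p$ for a suitable $\lambda<-K$. One first verifies from the bound $\|S_t\|_{p\to p}\le e^{Kt}$ that $R_\lambda(H_p)\phi=-\int_0^\infty e^{\lambda s}S_s\phi\,ds$ is bounded and inverts $\lambda\,\text{Id}-H_p$ (so that operator is injective with full range). One then approximates the integral by Riemann sums $\phi_{n,k}$; these land in the span of $\bigcup_t S_t(\mathfrak{D})$, hence in $\text{dom}(H_p)$ but \emph{not} in $\mathfrak{D}$, and the commutation $S_tH_p=H_pS_t$ on $S_t(\mathfrak{D})\cup\mathfrak{D}$ is what lets one pass $H_p$ through the sums and identify the limits. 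Finally one shows $\text{Image}(\lambda\,\text{Id}-\hat H_p)$ is closed (using the uniform resolvent bound) and contains the dense set $\mathfrak{D}$, hence equals $L^p$, giving $\hat H_p=H_p$. Your sketch has the right ingredients (the Laplace-transform representation, the role of $\|S_t\|_{p\to p}\le e^{Kt}$, the need for the commutation relation) but it is routed through an unavailable invariance; if you drop the claim $S_t(\mathfrak{D})\subset\mathfrak{D}$ and argue via the range of $\lambda\,\text{Id}-\hat H_p$ instead, the argument goes through.
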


\begin{proof}
 The proof goes along the lines of \cite{Rob}*{pp.~14--17}. However, although Robinson assumes that $S_t$ is a contraction semigroup and that $S_t(\mathfrak{D})\subset \mathfrak{D}$, we will prove that the condition $\|S_t \|_{p \to p}\leq e^{K t}$ is enough to obtain the statement, and that we can replace the assumption $S_t(\mathfrak{D})\subset \mathfrak{D}$ by $S_t(\mathfrak{D})\cup \mathfrak{D}\subset \text{dom}(H_p)$. \medskip

Fix a constant $\lambda < -K <0$ and define
  \[ R_\lambda (H_p)\phi \define -\int_0^\infty ds \;e^{\lambda s} S_s \phi\]
This operator is bounded on $L^p$ since for $\phi\in L^p$
\[ \Vert R_\lambda (H_p)\phi \Vert_{L^p} \leq \int_0^\infty ds \; e^{(\lambda +K)s} \Vert \phi\Vert_{L^p} = -(\lambda+K)^{-1} \Vert \phi\Vert_{L^p}.\]

 Moreover,
 \begin{align*}
  \Vert t^{-1}(\text{Id}-S_t)& R_\lambda(H_p)\phi + \phi - \lambda R_\lambda(H_p)\phi\Vert_{L^p}\\
   \leq& \; \Vert   t^{-1}\int_0^\infty ds \; e^{\lambda s} (S_{s+t}-S_{s}) \phi +\phi + \lambda \int_0^\infty ds \; e^{\lambda s} S_s \phi  \Vert_{L^p} \\
 \leq& \; \Vert  \left(t^{-1}(e^{-\lambda t}-1)  + \lambda\right) \int_0^\infty ds \; e^{\lambda s} S_{s} \phi +\phi - t^{-1} \int_0^t ds \; e^{\lambda (s-t)} S_s \phi \Vert_{L^p} \\
 \leq& \; | t^{-1}(e^{ -\lambda t}-1) + \lambda|  \; \Vert R_\lambda(H_p)\phi\Vert_{L^p} +\Vert   \phi - t^{-1} \int_0^t ds \; e^{\lambda (s-t)} S_s \phi \Vert_{L^p} \\
 & \ \to 0 \ \ \text{as} \ \ t\to 0
 \end{align*}

 This implies  $R_\lambda(H_p)(L^p)\subset \text{dom}(H_p)$ and $(\lambda \, \text{Id} -H_p)R_\lambda (H_p)\phi=\phi$. In particular, we have that $\text{Image}(\lambda \, \text{Id}-{H_p})=L^p$ and
 that $(\lambda \text{Id}-{H_p})$ is injective.\medskip

 Let $\hat{H_p}$ be the closure of $H_p|_{\mathfrak{D}}\colon L^p\to L^p$ in the graph norm.  If we can prove that $H_p=\hat{H}_p$, then $\mathfrak{D}$  would be a core of $H_p$. Given that $\text{Image}(\lambda \text{Id}-{H_p})=L^p$  and the map $(\lambda \text{Id}-{H_p})$ is injective for some $\lambda<-K<0$, if we can also show that $\text{Image}(\lambda \text{Id}-\hat{H_p})=L^p$, then we would immediately get that $H_p=\hat{H}_p$.\medskip

 Hence, it remains to prove that $\text{Image}(\lambda \text{Id}-\hat{H}_p)=L^p$ for some $\lambda<-K$. To this end we define
 \[ \psi_{n,k}\define -\frac{k}{n}{ \sum_{i=1}^{n} e^{\lambda \frac{ki}{n}} S_{\frac{ki}{n}}} \psi\]
 for any $\psi\in L^2\cap L^p$.  Then $\psi_k\define \lim_{n\to \infty}\psi_{n,k}= -\int_0^k ds \; e^{\lambda s} S_s \psi$ and $\lim_{k\to \infty} \psi_k= R_\lambda(H_p)\psi$ in $L^p$.
 Moreover, since $S_tH_p=H_pS_t$ on $S_t\mathfrak{D}\cup \mathfrak{D}$ we have  $\lim_{k\to \infty}\lim_{n\to \infty} (\lambda -H_p)\phi_{n,k}=\phi$ in $L^p$ for all $\phi \in S_t(\mathfrak{D})\cup \mathfrak{D}$. \medskip

Now let   $\phi\in \mathfrak{D}$. Then $\phi_{n,k} \in S_t(\mathfrak{D})\cup \mathfrak{D}$ and as a result, $(\lambda-H_p) \phi_{n,k}= ((\lambda-H_p)\phi)_{n,k}$.  Hence  $\lim_{k\to \infty} \lim_{n\to \infty} ((\lambda -H_p) \phi)_{n,k} =\phi$ in $L^p$ which implies $\mathfrak{D}\subset \text{Image}(\lambda -\hat{H}_p)\subset \text{Image}(\lambda -{H}_p)=L^p$. \medskip

Therefore, it remains to show that $\text{Image}(\lambda - \hat{H}_p)$ is norm closed, which then implies  $\text{Image}(\lambda\text{Id}-\hat{H}_p) = L^p$:
Since $(\lambda\text{Id}- \hat{H}_p)^{-1}$ is bounded, then $(\lambda\text{Id}- \hat{H}_p)$ is bounded below. To show that $\text{Image}(\lambda\text{Id}-\hat{H}_p) = L^p$, let $\phi_i\in \text{Image}(\lambda - \hat{H}_p)$ and $\phi_i\to \phi$ in $L^p$. Since $(\lambda -H_p)^{-1}$ is bounded and $\text{dom}(H_p) \subset \text{dom}(\hat{H}_p)$, $\psi_i\define (\lambda -\hat{H}_p)^{-1}\phi_i=(\lambda - {H}_p)^{-1}\phi_i\in \text{dom}(\hat{H}_p)$ converges to $\psi\define (\lambda-H_p)^{-1}\phi$ in $L^p$. Thus, $\psi_i\to \psi$ and $\hat{H}_p\psi_i\to \lambda\psi -\phi$  in $L^p$. Since $\hat{H}_p$ is closed in graph norm $\psi\in \text{dom}(\hat{H}_p)$ and hence $\text{Image}(\lambda - \hat{H}_p)$ is norm closed
 \end{proof} \medskip

 \begin{corl}\label{cor_dom_D^2}
  Let $(M,g)$ be a complete Riemannian manifold with Clifford bundle $S$ and associated Dirac operator  $D$. Suppose that the Ricci curvature of $(M,g)$ and the respective Clifford contraction are bounded from below.   Then, for $1\leq p<\infty$ the generator of the semigroup $e^{-tD^2}\colon L^p\to L^p$ is the square of the completion of  $D_c\colon C_c^\infty(S)\subset L^p(S)\to C_c^\infty(S)\subset L^p(S)$ with respect to the graph norm.
 \end{corl}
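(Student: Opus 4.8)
The plan is to apply the abstract Lemma~\ref{lem8} with $\mathfrak{D}=C_c^\infty(S)$ and $S_t=e^{-tD_2^2}$, the heat semigroup of the $L^2$-self-adjoint square Dirac operator, and with $K=K_1$. Four hypotheses must be verified: that $C_c^\infty(S)$ is a core for $D_2^2$ on $L^2$ and is dense in $L^p$ for $1\le p<\infty$; that $\|S_t\|_{p\to p}\le e^{K_1t}$; and that $S_t(\mathfrak{D})\cup\mathfrak{D}\subset\mathrm{dom}(H_p)$ with $S_tH_p=H_pS_t$ there. The first statement is the essential self-adjointness of $D^2$ on a complete manifold (from the Weitzenb\"ock identity and completeness), the density is standard, and the bound on $\|S_t\|_{p\to p}$ is precisely Lemma~\ref{Dom3}. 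Thus the content lies in the domain and commutation conditions, which I would extract from the smoothing properties of the heat semigroup.

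By Proposition~\ref{Tb1} the operator $e^{-tD_2^2}$ has a smooth integral kernel with a Gaussian upper bound; combined with $\|S_t\|_{p\to p}\le e^{K_1t}$ and dominated convergence this gives that $t\mapsto S_t\eta$ is $L^p$-norm continuous at $t=0$ for every $\eta\in C_c^\infty(S)$. For $\varphi\in C_c^\infty(S)$ one has, on $L^2$, the Duhamel identity $\varphi-S_u\varphi=\int_0^u S_sD^2\varphi\,ds$; since $D^2\varphi\in C_c^\infty(S)\subset L^2\cap L^p$ and $s\mapsto S_sD^2\varphi$ is $L^p$-continuous and $L^p$-bounded on $[0,u]$, the Bochner integral exists in $L^p$ and the identity persists there. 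Hence $u^{-1}(\varphi-S_u\varphi)=u^{-1}\int_0^uS_sD^2\varphi\,ds\to D^2\varphi$ in $L^p$ as $u\to0$, so $\mathfrak{D}\subset\mathrm{dom}(H_p)$ with $H_p\varphi=D^2\varphi$. Applying $S_t$ and using its $L^p$-boundedness, $u^{-1}(\mathrm{Id}-S_u)(S_t\varphi)=S_t\bigl(u^{-1}(\mathrm{Id}-S_u)\varphi\bigr)\to S_tD^2\varphi$ in $L^p$, so $S_t\varphi\in\mathrm{dom}(H_p)$ with $H_pS_t\varphi=S_tD^2\varphi=S_tH_p\varphi$; the commutation on $\mathfrak{D}$ itself is the same identity. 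Lemma~\ref{lem8} then yields that $C_c^\infty(S)$ is a core for $H_p$ and that $H_p$ is the closure of $H_p|_{C_c^\infty(S)}=D^2|_{C_c^\infty(S)}$ in the graph norm.

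It remains to identify this closure with $D_p^2=(D_p)^2$, $D_p$ being the graph-norm completion of $D_c$. Since $D_c\bigl(C_c^\infty(S)\bigr)\subset C_c^\infty(S)\subset\mathrm{dom}(D_p)$, the operator $(D_p)^2$ is a closed extension of $D^2|_{C_c^\infty(S)}$, hence $H_p\subset(D_p)^2$. For the reverse inclusion I would use that the bound $\|e^{-tD^2}\|_{p\to p}\le e^{K_1t}$ forces $\sigma(H_p)\subset\{\mathrm{Re}\,z\ge-K_1\}$, so $\lambda-H_p$ maps $\mathrm{dom}(H_p)$ bijectively onto $L^p$ for $\lambda<-K_1$, together with injectivity of $\lambda-(D_p)^2$ for the same $\lambda$: given $\psi\in\mathrm{dom}((D_p)^2)$, solve $(\lambda-H_p)\tilde\psi=(\lambda-(D_p)^2)\psi$ with $\tilde\psi\in\mathrm{dom}(H_p)\subset\mathrm{dom}((D_p)^2)$, so $(\lambda-(D_p)^2)(\psi-\tilde\psi)=0$, hence $\psi=\tilde\psi\in\mathrm{dom}(H_p)$. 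The injectivity of $\lambda-(D_p)^2$ would follow from an $L^p$-version of Lemma~\ref{Dom1}: for $\varphi\in C_c^\infty(S)$,
\[
\mathrm{Re}\int_M\bigl\langle D^2\varphi,|\varphi|^{p-2}\varphi\bigr\rangle\,dv\ \ge\ \int_M|\varphi|^{p-1}(\Delta-K_1)|\varphi|\,dv\ \ge\ -K_1\|\varphi\|_p^p ,
\]
so that $(D_p)^2+K_1$ is accretive on $L^p$ and $\lambda-(D_p)^2$ is injective whenever $\mathrm{Re}\,\lambda<-K_1$; alternatively one may factor $\lambda-(D_p)^2=-(D_p-\mathrm{i}c)(D_p+\mathrm{i}c)$ with $c=\sqrt{-\lambda}$ and exclude the eigenvalues $\pm\mathrm{i}c$ of $D_p$ when $c^2>K_1$.

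The main obstacle, as always for $p\neq2$, is this last identification: turning the accretivity inequality (equivalently, a cut-off approximation $\chi_R\psi\to\psi$ in the graph norm of $(D_p)^2$) into a rigorous statement requires $L^p$-control of the first covariant derivatives of elements of $\mathrm{dom}((D_p)^2)$ on large annuli, which is not immediate from the Weitzenb\"ock identity once $p\neq2$. To handle it I would first regularize by the heat semigroup --- replacing $\psi$ by $e^{-sD^2}\psi$, which is smooth, lies in every relevant domain, converges to $\psi$ as $s\to0$ in the graph norm of $(D_p)^2$ (using $De^{-sD^2}=e^{-sD^2}D$ on the core and strong continuity), and for which the needed $L^p$-gradient bounds on annuli are available from the heat-kernel gradient estimates valid under $\mathrm{Ric}\ge-K_0$ --- and only then perform the cut-off and the integration by parts.
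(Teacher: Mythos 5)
Your application of Lemma~\ref{lem8} matches the paper's proof: same $\mathfrak{D}=C_c^\infty(S)$, same semigroup $S_t=e^{-tD_2^2}$, same constant $K=K_1$ from Lemma~\ref{Dom3}. The Duhamel argument you supply for $\mathfrak{D}\subset\mathrm{dom}(H_p)$ is a genuine improvement over the paper, which disposes of this step with ``Clearly $\mathfrak{D}\subset\mathrm{dom}(H_p)$'' before verifying $S_t(\mathfrak{D})\subset\mathrm{dom}(H_p)$ and the commutation by the same $e^{K_1t}$ estimate you use. Up to the conclusion that $H_p$ is the graph-norm closure of $D_c^2$ on $C_c^\infty(S)$, your proof and the paper's are the same argument, yours written out in more detail.

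Where you diverge from the paper is in the final step, and here you have correctly put your finger on a real gap. The paper's proof stops at ``$H_p$ is the completion of $D_c^2\colon C_c^\infty(S)\to C_c^\infty(S)$ in the graph norm'' and relies on the identification $D_p^2 = (D_p)^2$ declared in the opening paragraph of Appendix~\ref{sec_def_D}, so it does not itself establish $\overline{D_c^2|_{C_c^\infty}}=(D_p)^2$. You are right that this needs its own argument, and the resolvent-bijectivity scheme you sketch is the natural route. However, two of the steps you rely on are not justified. First, you assert that $(D_p)^2$ is a \emph{closed} extension of $D^2|_{C_c^\infty}$, but the square of a closed operator is in general not closed; the standard sufficient condition $\rho(D_p)\neq\emptyset$ is exactly the hypothesis the paper introduces \emph{elsewhere} (as $\sigma(D_1)\neq\mathbb C$) but deliberately avoids in this Corollary. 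Without closedness of $(D_p)^2$, the inclusion $H_p\subset(D_p)^2$ does not follow from the extension property on the core, and the resolvent trick that needs $\mathrm{dom}(H_p)\subset\mathrm{dom}((D_p)^2)$ is blocked. Second, in the regularization remedy you need $e^{-sD^2}\psi\to\psi$ in the graph norm of $(D_p)^2$, which requires commuting $e^{-sD^2}$ with $D_p$ on all of $\mathrm{dom}((D_p)^2)$; since $D$ is unbounded, ``commutation on the core plus strong continuity'' does not yield this, and proving it tends to presuppose the very identity $H_p=(D_p)^2$ you are after. So your diagnosis of the gap is correct and valuable, but the proposed remedy is not yet a proof.
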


\begin{proof}
 Set $S_t\define e^{-tD^2}\colon L^2\to L^2$. Then $S_t$ is a strongly continuous semigroup  with $|S_t|\leq e^{K_1 t}$ by Lemma~\ref{Dom3}.  Let $\mathfrak{D}=C_c^\infty(S)$. Then,  $\mathfrak{D}$ is a core of $S_t$ and dense in $L^p$ for $1\leq p<\infty$.  Let  $S_t\colon L^p\to L^p$ be the completion of $S_t|_{\mathfrak{D}}$ in the graph norm. The corresponding generator of this semigroup will be denoted by $H_p$.  Clearly, $\mathfrak{D} \subset \text{dom}(H_p)$.\medskip

Let $\psi\in \mathfrak{D}$ and $H_p\psi=\phi$. Then,
\[
\Vert t^{-1}(\text{Id}-S_t) S_t\psi- S_t\phi\Vert_{L^p}\leq e^{K_1t} \Vert t^{-1}(\text{Id}-S_t) \psi- \phi\Vert_{L^p}\to 0 \quad \text{as} \quad t\to 0.
\]
Hence $ S_t(\mathfrak{D}) \subset \text{dom}(H)$. Moreover,  by construction  $H_2$ is the generator $D^2$ of the semigroup $S_t$ on $L^2$, and $S_t$ and $H_2$ commute on $\text{dom}(H_2)\subset L^2$ by functional calculus.  Since all $H_p$ coincide on $\text{dom}(H_2)\cap \text{dom}(H_p)$ (and the same for $S_t$ for the different $p$) we have that $S_t$ and $H_p$ commute on $S_t(\mathfrak{D})\cup \mathfrak{D}$.\medskip

Using Lemma \ref{lem8} and the fact that $H|_{\mathfrak{D}}=D_c^2$ we get that  $H_p$ is the completion of $D_c^2\colon C_c^\infty(S)\subset L^p(S)\to C_c^\infty(S)\subset L^p(S)$ with respect to the graph norm.
\end{proof}

\section{Spectrum for bounded geometry}

In \cite{AG16}*{Lem~B.8} the implication: $\lambda^2\in \sigma (D_p^2)$ if and only if  $\lambda$ or $-\lambda$ belongs to $\sigma (D_p)$ is proved for the spin Dirac operator under the additional assumption that $\sigma (D_p)\neq \mathbb C$. The proof immediately carries over to manifolds with Clifford bundles and associated Dirac operator $D$. Under additional geometric assumptions on the manifold and the Clifford bundle, the assumption
$\sigma (D_p)\neq \mathbb C$ is automatic. We add here a corresponding statement for completeness.

\begin{lem}\label{app_b}
 Let $(M,g)$ be a complete Riemannian manifold with Clifford bundle $S$ and associated Dirac operator $D$. Assume that $(M,g)$ and $S$ are of bounded geometry. Let $p\in [1,\infty]$. Then $\lambda^2\in \sigma (D_p^2)$ if and only if  $\lambda$ or $-\lambda$ belongs to $\sigma (D_p)$.
\end{lem}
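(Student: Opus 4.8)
The plan is to reduce the statement to the analogue of \cite{AG16}*{Lem.~B.8}, whose proof we are told carries over verbatim to general Clifford bundles, by checking that under bounded geometry the hypothesis $\sigma(D_p)\neq \mathbb{C}$ is automatically satisfied. So the real content is: \emph{if $(M,g)$ and $S$ are of bounded geometry, then $\sigma(D_p)\neq \mathbb{C}$ for every $p\in[1,\infty]$.} Once this is established, the biconditional $\lambda^2\in\sigma(D_p^2)\iff \lambda\in\sigma(D_p)\text{ or }-\lambda\in\sigma(D_p)$ follows immediately by invoking the cited lemma (in its Clifford-bundle form, as already used in Section~\ref{sec5}).

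To prove $\sigma(D_p)\neq\mathbb{C}$, I would exhibit a single point $z_0\in\mathbb{C}$ lying in the resolvent set $\rho(D_p)$; since the spectrum is closed, it suffices to show $(D_p - z_0)^{-1}$ exists and is bounded on $L^p$. The natural choice is a purely imaginary $z_0 = i\mu$ with $|\mu|$ large. First I would recall that on a manifold of bounded geometry with a Clifford bundle of bounded geometry, the Weitzenb\"ock curvature term $\mathcal{R}$ is a bounded endomorphism, say $\|\mathcal{R}\|_\infty\leq K_1$, so $D^2$ has semigroup bounds $\|e^{-tD^2}\|_{p\to p}\leq e^{K_1 t}$ by Lemma~\ref{Dom3}, and in particular $\sigma(D_p^2)\subset\{\,\mathrm{Re}\,z\geq -K_1\,\}$ — this is where bounded geometry enters in an essential way, replacing the Ricci-plus-$\mathcal{R}$ lower bound of Theorem~\ref{thmD2Lp} with honest two-sided control. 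Then for $z_0 = i\mu$ one has formally $(D_p-i\mu)(D_p+i\mu) = D_p^2 + \mu^2$, and since $-\mu^2 < -K_1$ for $|\mu|$ large, $-\mu^2\in\rho(D_p^2)$, so $(D_p^2+\mu^2)^{-1}$ is bounded on $L^p$; this factorization gives a bounded two-sided inverse for $D_p - i\mu$ (using that $D_p$ and $D_p^2$ are built as graph-norm closures of the same operator on $C_c^\infty(S)$, so the algebraic identity $(D_p+i\mu)(D_p^2+\mu^2)^{-1}$ is a genuine bounded right inverse, and symmetrically on the left). Hence $i\mu\in\rho(D_p)$ and $\sigma(D_p)\neq\mathbb{C}$. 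For the endpoint cases $p=1,\infty$ one uses the duality $\sigma(D_\infty)=\overline{\sigma(D_1)}$ (or the dual-operator definition) so it is enough to treat $p\in[1,\infty)$; alternatively one can cite \cite{AG16}*{Rem.~B9}, to which the excerpt already points, and just note its proof is insensitive to the Clifford-bundle generalization.

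The main obstacle I anticipate is the care needed in the operator-theoretic bookkeeping relating $D_p$ and $D_p^2$: one must make sure that $D_p^2$ as defined (graph-norm closure of $D_c^2$, equivalently the semigroup generator by Corollary~\ref{cor_dom_D^2}) really equals $(D_p)^2$ as an operator composition with the expected domain, so that the factorization $D_p^2+\mu^2 = (D_p-i\mu)(D_p+i\mu)$ is valid with matching domains and the inverses compose correctly. On bounded geometry this is standard — $D_p$ generates a holomorphic semigroup in a sector and $\mathrm{dom}(D_p^2)=\mathrm{dom}((D_p)^2)$ — but it is the step most easily glossed over. A cleaner route that sidesteps this entirely: directly estimate $\|(D_p-i\mu)\varphi\|_{L^p}\geq c|\mu|\,\|\varphi\|_{L^p}$ for $\varphi\in C_c^\infty(S)$ and $|\mu|$ large, together with the analogous lower bound for the formal adjoint $D_{p'}+i\mu$ acting on the dual space, and conclude surjectivity plus injectivity of $D_p-i\mu$ by a standard functional-analytic argument; the lower bound itself follows from integrating $\mathrm{Re}\langle (D^2+\mu^2)\varphi,\varphi\rangle$-type identities combined with the Kato inequality of Lemma~\ref{Dom1} and $\mathcal{R}\geq -K_1$. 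Either way the proof is short once the automatic $\sigma(D_p)\neq\mathbb{C}$ is in hand, and the write-up should mostly consist of citing \cite{AG16}*{Lem.~B.8} and \cite{AG16}*{Rem.~B9} and remarking that the Clifford-bundle generalization changes nothing.
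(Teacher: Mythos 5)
Your strategy and the paper's are genuinely different, and the difference matters. The paper does \emph{not} prove $\sigma(D_p)\neq\mathbb{C}$ and then cite \cite{AG16}*{Lem.~B.8} as a black box. Instead it observes that the ``only if'' direction ($\lambda^2\in\sigma(D^2_p)\Rightarrow\lambda\text{ or }-\lambda\in\sigma(D_p)$) holds unconditionally, and that the ``if'' direction requires exactly one new ingredient: the regularity implication $\psi\in L^p,\ D^2\psi\in L^p\ \Rightarrow\ D\psi\in L^p$, which on bounded geometry follows from the uniform elliptic a~priori estimate $\|D\phi\|_{L^p}\leq c(\|\phi\|_{L^p}+\|D^2\phi\|_{L^p})$. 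That single inequality is the whole content of the paper's proof.

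Here is the issue with your route. You reduce to proving $\sigma(D_p)\neq\mathbb{C}$, and your main tool is the factorization $D_p^2+\mu^2=(D_p-i\mu)(D_p+i\mu)$ together with $-\mu^2\in\rho(D_p^2)$ for $\mu^2>K_1$. But to turn that factorization into a bounded two-sided inverse you must know that $(D_p+i\mu)(D_p^2+\mu^2)^{-1}$ is bounded on $L^p$, i.e.\ that $D_p(D_p^2+\mu^2)^{-1}$ is bounded. Since $(D_p^2+\mu^2)^{-1}$ and $D_p^2(D_p^2+\mu^2)^{-1}=I-\mu^2(D_p^2+\mu^2)^{-1}$ are both bounded, this is \emph{precisely} the Gårding-type estimate $\|D\phi\|_{L^p}\lesssim\|\phi\|_{L^p}+\|D^2\phi\|_{L^p}$ that the paper cites. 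You flag this as ``the step most easily glossed over'' but then gloss over it: neither the claim ``$\mathrm{dom}(D_p^2)=\mathrm{dom}((D_p)^2)$'' nor the holomorphic-semigroup remark supplies the needed $L^p$ bound without that same elliptic inequality. Your proposed ``cleaner alternative'' --- a lower bound $\|(D_p-i\mu)\varphi\|_{L^p}\geq c|\mu|\|\varphi\|_{L^p}$ obtained by ``integrating $\mathrm{Re}\langle(D^2+\mu^2)\varphi,\varphi\rangle$-type identities'' --- is also not a proof: the pointwise Kato inequality of Lemma~\ref{Dom1} yields $L^2$-type control, and for $p\neq 2$ a pairing of $(D_p-i\mu)\varphi$ against $|\varphi|^{p-2}\bar\varphi$ produces terms involving $\nabla|\varphi|^{p-2}$ that are not obviously controllable without again invoking elliptic regularity.

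In short: the paper's proof is shorter because it identifies the one estimate that bounded geometry actually buys and uses it directly in the ``if'' direction, whereas your route detours through $\sigma(D_p)\neq\mathbb{C}$ and ends up needing the identical estimate anyway, without making it explicit. Your overall plan is salvageable --- once you write down $\|D\phi\|_{L^p}\leq c(\|\phi\|_{L^p}+\|D^2\phi\|_{L^p})$ and justify it by uniform local elliptic estimates on bounded geometry, your factorization closes and so does the paper's argument --- but as stated the proposal leaves precisely the load-bearing step unjustified.
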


\begin{proof}
 The `only if'-direction holds without the assumption of bounded geometry as shown in \cite{AG16}*{Lem~B.8}. The proof of the `if'-direction is also the same as therein as soon as we have: If $\psi\in L^p$ and $D^2\psi\in L^p$, then $D\psi\in L^p$. In case of bounded geometry this follows from elliptic regularity theory. We have $\Vert D\phi\Vert_{L^p}\leq c\left( \Vert \phi\Vert_{L^p}+ \Vert D^2\phi\Vert_{L^p}\right)$ for some $c>0$ and hence $D\psi\in L^p$.
\end{proof}

We note that in the case of a spin manifold $(M,g)$ with $S$ the corresponding spinor bundle, the assumption that $(M,g)$ is of bounded geometry automatically implies that $S$ is of bounded geometry.

\begin{bibdiv}
\begin{biblist}

\bib{AG16}{article}{
   author={Ammann, Bernd},
   author={Gro\ss e, Nadine},
   title={$L^p$-spectrum of the Dirac operator on products with hyperbolic
   spaces},
   journal={Calc. Var. Partial Differential Equations},
   volume={55},
   date={2016},
   number={5},
   pages={Art. 127, 36},
   issn={0944-2669},
}

\bib{Aut}{article}{
   author={Auterhoff, J\"{u}rgen},
   title={Interpolationseigenschaften des Spektrums linearer Operatoren auf
   $L^{p}$-R\"{a}umen},
   language={German},
   journal={Math. Z.},
   volume={184},
   date={1983},
   number={3},
   pages={397--406},
   issn={0025-5874},
}

 \bib{Barhyp}{article}{
  author = {B\"{a}r, Christian},
  title = {{The Dirac operator on hyperbolic manifolds of finite volume}},
  journal = {{J. Differ. Geom.}},
  volume = {54},
  number = {3},
 pages = {439--488},
 date = {2000},
 }

\bib{BGM}{article}{
  author={B\"{a}r, Christian},
   author={Gauduchon, Paul},
   author={Moroianu, Andrei},
   title={Generalized cylinders in semi-{R}iemannian and {S}pin
              geometry},
   journal={Math. Z.},
   volume={249},
   date={2005},
   number={3},
   pages={545--580},
}

\bib{Bart}{article}{
   author={Bartnik, Robert},
   title={The mass of an asymptotically flat manifold},
   journal={Comm. Pure Appl. Math.},
   volume={39},
   date={1986},
   number={5},
   pages={661--693},
   issn={0010-3640},
}

\bib{BGV}{book}{
   author={Berline, Nicole},
   author={Getzler, Ezra},
   author={Vergne, Mich\`ele},
   title={Heat kernels and Dirac operators},
   series={Grundlehren Text Editions},
   note={Corrected reprint of the 1992 original},
   publisher={Springer-Verlag, Berlin},
   date={2004},
   pages={x+363},
   isbn={3-540-20062-2},
}

\bib{Char1}{article}{
   author={Charalambous, Nelia},
   title={On the $L^p$-independence of the spectrum of the Hodge
   Laplacian on non-compact manifolds},
   journal={J. Funct. Anal.},
   volume={224},
   date={2005},
   number={1},
   pages={22--48},
}

\bib{ChLu1}{article}{
   author={Charalambous, Nelia},
   author={Lu, Zhiqin},
   title={Heat kernel estimates and the essential spectrum on weighted
   manifolds},
   journal={J. Geom. Anal.},
   volume={25},
   date={2015},
   number={1},
   pages={536--563},
   issn={1050-6926},
}

\bib{ChLu2}{article}{
   author={Charalambous, Nelia},
   author={Lu, Zhiqin},
   title={On the spectrum of the Laplacian},
   journal={Math. Ann.},
   volume={59},
   date={2014},
   number={1-2},
   pages={211--238},
}

\bib{Chavel}{book}{
   author={Chavel, Isaac},
   title={Eigenvalues in Riemannian geometry},
   series={Pure and Applied Mathematics},
   volume={115},
   note={Including a chapter by Burton Randol;
   With an appendix by Jozef Dodziuk},
   publisher={Academic Press, Inc., Orlando, FL},
   date={1984},
   pages={xiv+362},
}

\bib{Davies}{book}{
   author={Davies, E. B.},
   title={Heat kernels and spectral theory},
   series={Cambridge Tracts in Mathematics},
   volume={92},
   publisher={Cambridge University Press},
   place={Cambridge},
   date={1990},
   pages={x+197},
}

\bib{Dav2}{article}{
   author={Davies, E. B.},
   title={Heat kernel bounds for second order elliptic operators on
   Riemannian manifolds},
   journal={Amer. J. Math.},
   volume={109},
   date={1987},
   number={3},
   pages={545--569},
}

\bib{davies97}{article}{
 Author = {E. B. {Davies}},
 Title = {{\(L^p\)-spectral theory of higher-order elliptic differential operators}},
 Journal = {{Bull. Lond. Math. Soc.}},
 ISSN = {0024-6093},
 Volume = {29},
 Number = {5},
 Pages = {513--546},
 Year = {1997},
}

\bib{DP}{article}{
   author={Dunford, Nelson},
   author={Pettis, B. J.},
   title={Linear operations on summable functions},
   journal={Trans. Amer. Math. Soc.},
   volume={47},
   date={1940},
   pages={323--392},
   issn={0002-9947},
}

\bib{EN}{book}{
   author={Engel, Klaus-Jochen},
   author={Nagel, Rainer},
   title={One-parameter semigroups for linear evolution equations},
   series={Graduate Texts in Mathematics},
   volume={194},
   note={With contributions by S. Brendle, M. Campiti, T. Hahn, G. Metafune,
   G. Nickel, D. Pallara, C. Perazzoli, A. Rhandi, S. Romanelli and R.
   Schnaubelt},
   publisher={Springer-Verlag, New York},
   date={2000},
   pages={xxii+586},
}

\bib{FK}{article}{
   author={Finster, Felix},
   author={Kath, Ines},
   title={Curvature estimates in asymptotically flat manifolds of positive
   scalar curvature},
   journal={Comm. Anal. Geom.},
   volume={10},
   date={2002},
   number={5},
   pages={1017--1031},
   issn={1019-8385},
}

 \bib{HemVo}{article}{
   author={Hempel, Rainer},
   author={Voigt, J{\"u}rgen},
   title={On the $L_p$-spectrum of Schr\"odinger operators},
   journal={J. Math. Anal. Appl.},
   volume={121},
   date={1987},
   number={1},
   pages={138--159},
}

\bib{HemVo2}{article}{
   author={Hempel, Rainer},
   author={Voigt, J{\"u}rgen},
   title={The spectrum of a Schr\"odinger operator in $L_p({\bf R}^\nu)$ is $p$-independent},
   journal={Comm. Math. Phys.},
   volume={104},
   date={1986},
   number={2},
   pages={243--250},
}

 \bib{HSU}{article}{
   author={Hess, H.},
   author={Schrader, R.},
   author={Uhlenbrock, D. A.},
   title={Domination of semigroups and generalization of Kato's inequality},
   journal={Duke Math. J.},
   volume={44},
   date={1977},
   number={4},
   pages={893--904},
}

\bib{Kato}{book}{
   author={Kato, Tosio},
   title={Perturbation theory for linear operators},
   series={Die Grundlehren der mathematischen Wissenschaften, Band 132},
   publisher={Springer-Verlag New York, Inc., New York},
   date={1966},
   pages={xix+592},
}

\bib{Kaw1}{article}{
   author={Kawai, Shigeo},
   title={Essential spectrums of the Dirac operators on non-compact
   positively curved manifolds},
   journal={Far East J. Math. Sci. (FJMS)},
   volume={30},
   date={2008},
   number={2},
   pages={309--324},
   issn={0972-0871},
   review={\MR{2477774}},
}

\bib{Kaw2}{article}{
   author={Kawai, Shigeo},
   title={Spectrum of the Dirac operator on manifold with asymptotically
   flat end},
   journal={J. Geom. Phys.},
   volume={110},
   date={2016},
   pages={195--212},
   issn={0393-0440},
   review={\MR{3566109}},
   doi={10.1016/j.geomphys.2016.07.008},
}

\bib{Kordu}{article}{
 Author = {Yu. A. {Kordyukov}},
 Title = {{\(L^ p\)-theory of elliptic differential operators on manifolds of bounded geometry}},
 Journal = {{Acta Appl. Math.}},
 Volume = {23},
 Number = {3},
 Pages = {223--260},
 Year = {1991},
 }

\bib{ReSiII}{book}{
   author={Reed, Michael},
   author={Simon, Barry},
   title={Methods of modern mathematical physics. II. Fourier analysis,
   self-adjointness},
   publisher={Academic Press [Harcourt Brace Jovanovich, Publishers], New
   York-London},
   date={1975},
   pages={xv+361},
}

\bib{Rob}{book}{
   author={Robinson, Derek W.},
   title={Basic theory of one-parameter semigroups.},
   publisher={Proceedings of the Centre for Mathematical Analysis, Australian National University, 2. Australian National University, Centre for Mathematical Analysis, Canberra},
   date={1982},
   pages={iv+138},
}

\bib{Roe}{book}{
   author={Roe, John},
   title={Elliptic operators, topology and asymptotic methods},
   series={Pitman Research Notes in Mathematics Series},
   volume={395},
   edition={2},
   publisher={Longman, Harlow},
   date={1998},
   pages={ii+209},
}

\bib{Ros}{article}{
   author={Rosenberg, Steven},
   title={Semigroup domination and vanishing theorems},
   conference={
      title={Geometry of random motion},
      address={Ithaca, N.Y.},
      date={1987},
   },
   book={
      series={Contemp. Math.},
      volume={73},
      publisher={Amer. Math. Soc., Providence, RI},
   },
   date={1988},
   pages={287--302},
}

\bib{saco}{article}{
   author={Saloff-Coste, Laurent},
   title={Uniformly elliptic operators on Riemannian manifolds},
   journal={J. Differential Geom.},
   volume={36},
   date={1992},
   number={2},
   pages={417--450},
}

\bib{sturm}{article}{
   author={Sturm, Karl-Theodor},
   title={On the $L^p$-spectrum of uniformly elliptic operators on
   Riemannian manifolds},
   journal={J. Funct. Anal.},
   volume={118},
   date={1993},
   number={2},
   pages={442--453},
}
	
\bib{Wang97}{article}{
   author={Wang, Jiaping},
   title={The spectrum of the Laplacian on a manifold of nonnegative Ricci
   curvature},
   journal={Math. Res. Lett.},
   volume={4},
   date={1997},
   number={4},
   pages={473--479},
}
	
\bib{Witten81}{article}{
 Author = {Edward {Witten}},
 Title = {{A new proof of the positive energy theorem.}},
 Journal = {{Commun. Math. Phys.}},
 ISSN = {0010-3616},
 Volume = {80},
 Pages = {381--402},
 Year = {1981}
}

\end{biblist}
\end{bibdiv}

\end{document}